\newcounter{numberofremark}
\newcommand\nothing[1]{}
\newcommand{\dcl}{\DeclareMathOperator}
\dcl\cdet{cdet} \dcl\Sp{Specm} \dcl\depth{depth} \dcl\im{Im} \dcl\Span{span} \dcl\Ker{Ker} \dcl\Specm{Specm}
\dcl\Supp{Supp} \dcl\codim{codim} \dcl\Y{Y} \dcl\gl{\mathfrak{gl}}    \dcl\U{U} \dcl\T{T}
\dcl\qdet{qdet} \dcl\sgn{sgn} \dcl\gr{gr} \dcl\diag{diag}
\dcl\g{\mathfrak{g}} \dcl\C{\mathbb C} \dcl\dd{{\mathrm d}}
\newcommand\sm{{\mathsf m}}
\newcommand\Ga{{\Gamma}}
\def\cD{\mathcal D}
\newlength\yStones
\newlength\xStones
\newlength\xxStones
\def\Stones{\pst@object{Stones}}
\def\Stones@i#1{%
  \pst@killglue%
  \begingroup%
  \use@par%
  \setlength\xxStones{\xStones}%
  \expandafter\Stones@ii#1,,\@nil
  \endgroup
  \global\addtolength\xStones{0.6cm}%
  \global\addtolength\yStones{-7.5mm}}%
\def\Stones@ii#1,#2,#3\@nil{%
  \rput(\xxStones,\yStones){%
    \psframebox[framesep=0]{%
      \parbox[c][6mm][c]{11mm}{\makebox[11mm]{$#1$}}}}%
  \addtolength\xxStones{1.2cm}%
  \ifx\relax#2\relax\else\Stones@ii#2,#3\@nil\fi}
\def\Stone#1{\fbox{\makebox[13mm]{\strut#1}}\kern2pt}
\newtheorem{theorem}{Theorem}[section]
\newtheorem{lemma}[theorem]{Lemma}
\newtheorem{corollary}[theorem]{Corollary}
\newtheorem{proposition}[theorem]{Proposition}
\newtheorem{remark}[theorem]{Remark}
\newtheorem{definition}[theorem]{Definition}
\begin{document}
\title{Families of irreducible singular Gelfand-Tsetlin modules of $\displaystyle \mathfrak{gl}(n)$}

\author{Carlos A. Gomes}
\address{Instituto de Matem\'atica e Estat\'istica, Universidade de S\~ao
Paulo,  S\~ao Paulo SP, Brasil} \email{cgomes@ime.usp.br}
\author{Luis Enrique Ramirez}
\address{Universidade Federal do ABC, Santo Andr\'e SP, Brasil} \email{luis.enrique@ufabc.edu.br,}

\begin{abstract}
We prove a conjecture for the  irreducibility of singular Gelfand-Tsetlin modules announced in \cite{FGR3}. We describe explicitly  the irreducible subquotients  of  certain classes of singular Gelfand-Tsetlin modules.
\end{abstract}

\subjclass{Primary 17B67}
\keywords{Gelfand-Tsetlin modules,  Gelfand-Tsetlin basis, tableaux realization}
\maketitle

\section{Introduction}

In 1950, I. Gelfand and M. Tsetlin \cite{GT} constructed a  basis for any irreducible finite-dimensional $\gl(n)$-module. These  bases are parameterized by the so-called Gelfand-Tsetlin tableaux, whose entries satisfy some integer relations. Observing that the coefficients in the Gelfand-Tsetlin formulas are rational functions on the entries of the tableaux, it is natural to extend the Gelfand-Tsetlin construction to more general modules. For a generic tableau $T(L)$ (no integer relations between elements of the same row) the corresponding module was constructed in \cite{DFO3} and explicit bases for their irreducible subquotients were given in \cite{FGR} providing explicitly new irreducible modules for  
$\mathfrak{gl}(n)$.  The first attempt of generalization is the case when there is only one row and a unique pair of entries in this row with integral condition ($1$-singularity). This cases was treated in \cite{FGR3}. In this construction besides regular tableaux appear new 
 tableauxwhich are  called derivatives tableaux. The vector space $V(T(\bar{v}))$ generated by regulars and derivatives tableaux has a $\mathfrak{gl}(n)$-module structure. In  \cite{FGR3}, V. Futorny, D. Grantcharov and L.E. Ramirez  gave   sufficient condition for irreducibility of the module  $V(T(\bar{v}))$ and conjectured that this same condition is necessary.  In particular,   for $n=3$ this was shown in \cite{FGR}. \\

\

{\bf Conjecture}: Let $n\geq 2$, $V(T(\bar{v}))$  the $1$-singular $\gl(n)$-module. If $V(T(\bar{v}))$  is irreducible then the differences between elements of neighboring  rows of $T(\bar{v})$ are not integers.\\
\

  In the current paper we give a positive answer to this conjecture and describe the irreducible subquotients for certain families of $1$-singular modules $V(T(\bar{v}))$.
 
The paper is organized as follows. In Section $ 2$ we introduce the necessary definitions and notations used through the paper. In Section $3$ we recall the definition of Gelfand-Tsletlin subalgebras and Gelfand-Tsetlin modules. In the same section we recall the Gelfand-Tsetlin theorem about realizations of irreducible finite dimensional modules via Gelfand-Tsetlin tableaux. Also, we recall the construction of generic  Gelfand-Tsetlin modules  presented in \cite{DFO3}. In Section $ 4$ we  discuss  $1$-singular Gelfand-Tsetlin tableaux and modules  constructed in \cite{FGR3}. In the following section  we  define a preorder relation in the set of all tableaux (regular end derivatives) and 
establish  important properties of this relation that will be used  in the next section. Finally, in Ssections $6$ and $7$ we establish  main results in this paper; the first result gives us an explicit basis for an irreducible subquotient of the $1$-singular module $V(T(\bar{v}))$ that contains a given tableau, and the second result gives a positive answer for the Conjecture. \\

\

\noindent{\bf Acknowledgements.} We would like to thank Vyacheslav Futorny for stimulating discussions during the preparation of this paper.

\section{Conventions and notation} The ground field will be ${\mathbb C}$.  For $a \in {\mathbb Z}$, we write $\mathbb Z_{\geq a}$ for the set of all integers $m$ such that $m \geq a$.  We fix an integer $n\geq 2$. By $\gl(n)$ we denote the general linear Lie
algebra consisting of all $n\times n$ complex matrices, and by $\{E_{i,j}\mid 1\leq i,j \leq n\}$  - the
standard basis of $\gl(n)$ of elementary matrices. We fix the standard triangular decomposition and  the corresponding basis of simple roots of $\gl(n)$.  The weights of $\gl(n)$ will be written as $n$-tuples $(\lambda_1,...,\lambda_n).$ 

For a Lie algebra ${\mathfrak a}$ by $U(\mathfrak a)$ we denote the universal enveloping algebra of ${\mathfrak a}$. Throughout this paper $U = U(\gl (n))$.  For a commutative ring $R$, by ${\rm Specm}\, R$ we denote the set of maximal ideals of $R$.

We will write the vectors in $\mathbb{C}^{\frac{n(n+1)}{2}}$ in the following form:
$$
v=(v_{n1},...,v_{nn}|v_{n-1,1},...,v_{n-1,n-1}| \cdots|v_{21}, v_{22}|v_{11}).
$$
For $1\leq j \leq i \leq n$, $\delta^{ij} \in {\mathbb Z}^{\frac{n(n+1)}{2}}$ is defined by  $(\delta^{ij})_{ij}=1$ and all other $(\delta^{ij})_{m\ell}$ are zero. For $i>0$ by $S_i$ we denote the $i$th symmetric group. By $(m, \ell)$ we denote the transposition of $S_i$ switching $m$ and $\ell$.  

\section{Gelfand-Tsetlin modules}

\subsection{Definitions}
Recall that $U=U(\gl (n) )$.  Let  for $m\leqslant n$, let $\mathfrak{gl}_{m}$ be the Lie subalgebra
of $\gl (n)$ spanned by $\{ E_{ij}\,|\, i,j=1,\ldots,m \}$. We have the following chain
$$\gl_1\subset \gl_2\subset \ldots \subset \gl_n,$$
which induces  the chain $U_1\subset$ $U_2\subset$ $\ldots$ $\subset
U_n$ of the universal enveloping algebras  $U_{m}=U(\gl_{m})$, $1\leq m\leq n$. Let
$Z_{m}$ be the center of $U_{m}$. Then $Z_m$ is the polynomial
algebra in the $m$ variables $\{ c_{mk}\,|\,k=1,\ldots,m \}$,
\begin{equation}\label{equ_3}
c_{mk } \ = \ \displaystyle {\sum_{(i_1,\ldots,i_k)\in \{
1,\ldots,m \}^k}} E_{i_1 i_2}E_{i_2 i_3}\ldots E_{i_k i_1}.
\end{equation}

Following \cite{DFO3}, we call the subalgebra of $U$ generated by $\{
Z_m\,|\,m=1,\ldots, n \}$ the \emph{(standard) Gelfand-Tsetlin
subalgebra} of $U$ and will be denoted by  ${\Ga}$. In fact,  ${\Ga}$ is the polynomial algebra in the $\displaystyle \frac{n(n+1)}{2}$ variables $\{
c_{ij}\,|\, 1\leqslant j\leqslant i\leqslant n \}$ (\cite{Zh}).
 Let $\Lambda$ be the polynomial
algebra in the variables $\{\lambda_{ij}\,|$ $1\leqslant j\leqslant
i\leqslant n \}$.

Let $\imath:{\Ga}{\longrightarrow}$ $\Lambda$ be the embedding
defined by $\imath(c_{mk}) \  = \  \gamma_{mk} (\lambda)$, where
\begin{equation} \label{def-gamma}
\gamma_{mk} (\lambda): = \ \sum_{i=1}^m
(\lambda_{mi}+m-1)^k \prod_{j\ne i} \left( 1 -
\frac{1}{\lambda_{mi}-\lambda_{mj}} \right).
\end{equation} The image of $\imath$
coincides with the subalgebra of $G$-invariant polynomials  in
$\Lambda$, where $G:=S_{n}\times\cdots\times S_{1}$ (\cite{Zh}) which we identify with $\Ga$.

\begin{definition}
\label{definition-of-GZ-modules} A finitely generated $U$-module
$M$ is called a \emph{Gelfand-Tsetlin module (with respect to
$\Ga$)} if $M$ splits into  a direct sum
of $\Ga$-modules:

\begin{equation*}
M=\bigoplus_{\sm\in\Sp\Ga}M(\sm),
\end{equation*}
where $$M(\sm)=\{v\in M\ |\ \sm^{k}v=0 \text{ for some }k\geq 0\}.$$
\end{definition}

Identifying $\sm$ with the homomorphism $\chi:\Gamma \rightarrow {\mathbb C}$ with $\Ker \chi=\sm$, we will call $\sm$ 
a \emph{Gelfand-Tsetlin character} of $M$ if $ M(\sm) \neq 0$, and $\dim M(\sm)$ - the \emph{Gelfand-Tsetlin multiplicity of $\sm$}. The  \emph{Gelfand-Tsetlin support} of a Gelfand-Tsetlin module $M$ is the set of all Gelfand-Tsetlin characters of $M$.
\begin{remark}
Note that any irreducible Gelfand-Tsetlin  module over $\gl(n)$ is a weight module with respect to the
 standard Cartan subalgebra $\mathfrak h$ spanned by $E_{ii}$, $i=1,\ldots, n$. In particular, every highest weight module or, more generally, every module from the category $\mathcal O$ is a Gelfand-Tsetlin module.
 \end{remark}

\subsection{Finite dimensional modules for $\mathfrak{gl}(n)$}

In this section we recall a classical result of I. Gelfand and M. Tsetlin which provides an explicit basis  for every irreducible finite dimensional $\mathfrak{gl}(n)$-module.

\begin{definition} The following array  $T(v)$ of complex  numbers $\{v_{ij}\ | \ 1\leq j\leq i\leq n\}$ 
\medskip
\begin{center}

\Stone{\mbox{ $v_{n1}$}}\Stone{\mbox{ $v_{n2}$}}\hspace{1cm} $\cdots$ \hspace{1cm} \Stone{\mbox{ $v_{n,n-1}$}}\Stone{\mbox{ $v_{nn}$}}\\[0.2pt]
\Stone{\mbox{ $v_{n-1,1}$}}\hspace{1.7cm} $\cdots$ \hspace{1.8cm} \Stone{\mbox{ $v_{n-1,n-1}$}}\\[0.3cm]
\hspace{0.2cm}$\cdots$ \hspace{0.8cm} $\cdots$ \hspace{0.8cm} $\cdots$\\[0.3cm]
\Stone{\mbox{ $v_{21}$}}\Stone{\mbox{ $v_{22}$}}\\[0.2pt]
\Stone{\mbox{ $v_{11}$}}\\
\medskip
\end{center}
is called a \emph{Gelfand-Tsetlin tableau}. 
\end{definition}

A Gelfand-Tsetlin tableau  $T(v)$  is called \emph{standard} if:
$$v_{ki}-v_{k-1,i}\in\mathbb{Z}_{\geq 0} \hspace{0.3cm} and \hspace{0.3cm} v_{k-1,i}-v_{k,i+1}\in\mathbb{Z}_{> 0}, \hspace{0.2cm}\text{ for all } 1\leq i\leq k\leq n.$$

\begin{theorem}[\cite{GT}]\label{Gelfand-Tsetlin theorem}
Let $L(\lambda)$ be the finite dimensional irreducible module over $\mathfrak{gl}(n)$ of highest weight $\lambda=(\lambda_{1},\ldots,\lambda_{n})$. Then there exists a basis of $L(\lambda)$ consisting of all standard tableaux $T(v)$ with fixed top row $v_{n1}=\lambda_1,v_{n2}=\lambda_2-1,\ldots,v_{nn}=\lambda_{n}-n+1$. Moreover,  the action of the generators of $\mathfrak{gl}(n)$ on $L(\lambda)$ is given by the  \emph{Gelfand-Tsetlin formulas}:
\begin{equation}\label{GT formulas}
\begin{split}E_{k,k+1}(T(v))&=-\sum_{i=1}^{k}\left(\frac{\prod_{j=1}^{k+1}(v_{ki}-v_{k+1,j})}{\prod_{j\neq i}^{k}(v_{ki}-v_{kj})}\right)T(v+\delta^{ki}),\\
E_{k+1,k}(T(v))&=\sum_{i=1}^{k}\left(\frac{\prod_{j=1}^{k-1}(v_{ki}-v_{k-1,j})}{\prod_{j\neq i}^{k}(v_{ki}-v_{kj})}\right)T(v-\delta^{ki}),\\
E_{kk}(T(v))&=\left(k-1+\sum_{i=1}^{k}v_{ki}-\sum_{i=1}^{k-1}v_{k-1,i}\right)T(v).
\end{split}
\end{equation}
If the new tableau $T(v\pm\delta^{ki})$ is not standard, then the corresponding summand of $E_{k,k+1}(T(v))$ or $E_{k+1,k}(T(v))$ is zero by definition.  Furthermore, the action of generators $c_{rs}$ of $\Gamma$ defined by  (\ref{equ_3}) is given by, 
\begin{equation}\label{action of Gamma in finite dimensional modules}
c_{rs}(T(v))=\gamma_{rs}(v)T(v),
\end{equation}
where $\gamma_{rs}$ are defined in (\ref{def-gamma}).
\end{theorem}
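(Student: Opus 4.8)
The plan is to establish the existence of the tableau basis by induction on $n$ via the classical branching rule, and then to read off the explicit formulas from the uniqueness of intertwiners together with a normalisation. The base case $n=1$ is immediate: $\gl(1)$ is abelian, $L(\lambda_1)$ is one-dimensional, and its unique tableau has $v_{11}=\lambda_1$. For the inductive step, recall the branching rule for $\gl(n)\supset\gl(n-1)$: the restriction of $L(\lambda)$ is multiplicity-free, $L(\lambda)|_{\gl(n-1)}\cong\bigoplus_\mu L(\mu)$, where $\mu$ runs over the integral weights with $\lambda_1\geq\mu_1\geq\lambda_2\geq\cdots\geq\mu_{n-1}\geq\lambda_n$. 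Putting $v_{n-1,i}=\mu_i-(i-1)$, these interlacing inequalities become exactly $v_{n,i}-v_{n-1,i}\in\mathbb Z_{\geq 0}$ and $v_{n-1,i}-v_{n,i+1}\in\mathbb Z_{>0}$, i.e. the conditions defining a standard tableau in its top two rows. Applying the induction hypothesis to each $L(\mu)$ and appending the fixed top row $v_{ni}=\lambda_i-i+1$ to the resulting $\gl(n-1)$-tableaux produces a basis of $L(\lambda)$ indexed precisely by the standard tableaux with the prescribed top row. By construction each $T(v)$ sits in a chain $L(\lambda)\supset L(\mu^{(n-1)})\supset\cdots\supset L(\mu^{(1)})=\mathbb C\,T(v)$ of submodules, with $L(\mu^{(m)})$ the $\gl_m$-irreducible constituent whose highest weight is determined by the $m$-th row of $v$; comparing $\mathfrak h$-weights along this chain gives the formula for $E_{mm}$.

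It remains to determine the off-diagonal formulas. Inside the $\gl_{k+1}$-submodule fixed by rows $n,\dots,k+1$ of $v$ the restriction $\gl_{k+1}\downarrow\gl_k$ is multiplicity-free, and since the span of $E_{1,k+1},\dots,E_{k,k+1}$ carries the vector representation of $\gl_k$, Pieri's rule together with Schur's lemma force $E_{k,k+1}T(v)=\sum_{i=1}^{k}a_i(v)\,T(v+\delta^{ki})$ and, symmetrically, $E_{k+1,k}T(v)=\sum_{i=1}^{k}b_i(v)\,T(v-\delta^{ki})$, where $a_i,b_i$ are scalars depending only on rows $k$ and $k+1$; this is consistent with the ``non-standard $\Rightarrow$ zero'' convention, as a coefficient is forced to vanish when the shifted tableau leaves the standard locus. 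Two routes now finish the proof. In the first, one substitutes these expressions and the $E_{mm}$ formula into the relations of $\gl(n)$ --- most importantly $[E_{k,k+1},E_{k+1,k}]=E_{kk}-E_{k+1,k+1}$, linking $a_i$ and $b_i$, and the Serre relations, propagating constraints along a row --- obtains recursions for the coefficients, and solves them; the wall-vanishing conditions and the normalisation ``$T(v^{\mathrm{top}})$, with all rows equal to the shifted $\lambda$, is the highest weight vector'' then determine everything uniquely, yielding (\ref{GT formulas}). In the second, one verifies directly that the operators in (\ref{GT formulas}) satisfy every relation of $\gl(n)$, notes that the module they define on the span of standard tableaux with the prescribed top row is generated by the highest weight vector $T(v^{\mathrm{top}})$, hence is a highest weight quotient of the Verma module $M(\lambda)$, and --- being finite-dimensional, and a finite-dimensional highest weight module being irreducible --- must coincide with $L(\lambda)$; this also confirms the index set, since the number of Gelfand-Tsetlin patterns equals $\dim L(\lambda)$ by the Weyl dimension formula.

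For the action of $\Ga$, each $c_{rs}$ lies in the centre $Z_r$ of $U(\gl_r)$, so by Schur's lemma it acts on the $\gl_r$-submodule $L(\mu^{(r)})$ containing $T(v)$ by the scalar by which it acts on that submodule's highest weight vector --- a scalar depending only on the $r$-th row $(v_{r1},\dots,v_{rr})$. The Harish-Chandra and Perelomov-Popov evaluation of this eigenvalue is exactly $\gamma_{rs}(v)$ from (\ref{def-gamma}), which gives (\ref{action of Gamma in finite dimensional modules}) and is consistent with the embedding $\imath:\Ga\to\Lambda$ recorded above. The main obstacle is the computational core underlying both routes of the previous paragraph: checking that the rational coefficients in (\ref{GT formulas}) are mutually consistent, equivalently that $E_{ij}\mapsto$ the Gelfand-Tsetlin operator respects every commutation relation of $\gl(n)$. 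This is handled most cleanly with Mickelsson-Zhelobenko $Z$-algebras, which construct the raising and lowering operators between consecutive rows intrinsically and make the coefficient identities emerge, or, alternatively, via the Yangian presentation of $U(\gl(n))$, in which the Gelfand-Tsetlin generators and these coefficients appear naturally.
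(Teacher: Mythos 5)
This is a classical theorem attributed to Gelfand and Tsetlin \cite{GT}; the paper states it as a cited background result and gives no proof, so there is no in-paper argument to compare yours against. Your sketch follows the standard lines of the classical demonstration: the multiplicity-free branching for $\gl(n)\supset\gl(n-1)$ gives the tableau basis by induction (with the interlacing inequalities translating exactly into the standardness conditions), the diagonal formula is a weight computation along the flag of constituents, and the action of each $c_{rs}\in Z_r$ is a scalar on the $\gl_r$-constituent by Schur's lemma, evaluated by the Harish-Chandra/Perelomov-Popov formula as $\gamma_{rs}(v)$. Two points to tighten. First, a small slip: you claim both $a_i$ and $b_i$ depend only on rows $k$ and $k+1$; this is correct for $a_i$, but the coefficient $b_i$ of $E_{k+1,k}$ in (\ref{GT formulas}) involves rows $k-1$ and $k$. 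Second, the phrase ``Pieri's rule together with Schur's lemma force'' elides a genuine step: the facts that $E_{k,k+1}$ transforms in the vector representation of $\gl_k$, that branching is multiplicity-free, and that $E_{k,k+1}$ commutes with $\gl_{k-1}$ do force $E_{k,k+1}T(v)$ to be a combination of the $T(v+\delta^{ki})$, but the assertion that $a_i(v)$ depends only on rows $k$ and $k+1$ (rather than on all of $v$) is a nontrivial feature of the particular Gelfand-Tsetlin normalisation, not an automatic consequence of Schur's lemma. You correctly identify this, together with the verification of the $\gl(n)$ relations, as the computational core, and point to the Mickelsson-Zhelobenko and Yangian constructions as the standard ways to close it; so the plan is sound as a sketch, with the understanding that those constructions are doing the real work.
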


\subsection{Generic Gelfand-Tsetlin modules}\label{section generic modules}
In the case when all denominators are nonintegers, one can use the same formulas and define a new class of infinite dimensional $\gl(n)$-modules:  {\it generic}  Gelfand-Tsetlin modules (cf. \cite{DFO3}, Section 2.3).
\begin{definition}\label{generic tableau definition}
A Gelfand-Tsetlin tableau $T(v)$ is called \emph{generic} if $v_{rs}-v_{ru}\notin\mathbb{Z}$ for each $1\leq s<u\leq r\leq n-1$. 
\end{definition}
\begin{theorem}[\S2.3 in \cite{DFO3} and Theorem 2 in \cite{Maz2}]\label{Generic Gelfand-Tsetlin modules}
Let $T(v)$ be a generic  Gelfand-Tsetlin tableau. Denote by $V(T(v))$  the vector space with basis  consisting of all Gelfand-Tsetlin tableaux $T(w)$ satisfying $w_{nj}=v_{nj}$, $w_{ij}-v_{ij}\in\mathbb{Z}$ for , $1\leq j\leq i \leq n-1$. 
\begin{itemize}
\item[(i)] The vector space $V(T(v))$ has  a structure of a $\mathfrak{gl}(n)$-module with action of the generators of $\mathfrak{gl}(n)$ given by the Gelfand-Tsetlin formulas (\ref{GT formulas}). The module $V(T(v))$ has finite length.
\item[(ii)] The action of the generators of $\Gamma$ on the basis elements of $V(T(v))$ is given by (\ref{action of Gamma in finite dimensional modules}). 
\item[(iii)] The module defined  in {\rm (i)} is a Gelfand-Tsetlin module all of whose Gelfand-Tsetlin multiplicities are $1$.
\item[(iv)] The action of the generators of  $\Gamma$ is given by (\ref{action of Gamma in finite dimensional modules}).
\end{itemize}
\end{theorem}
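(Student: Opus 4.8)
Here is the plan I would follow to prove the four assertions; the result itself is \cite[\S2.3]{DFO3} and \cite[Theorem 2]{Maz2}. Write $I=\{w : w_{nj}=v_{nj},\ w_{ij}-v_{ij}\in\mathbb{Z}\ \text{for}\ 1\le j\le i\le n-1\}$ for the index set of the proposed basis. First I would observe that the coefficients in (\ref{GT formulas}) and (\ref{action of Gamma in finite dimensional modules}) are rational functions of the tableau entries whose denominators are products of differences $v_{ki}-v_{kj}$ with $k\le n-1$, so genericity of $T(v)$ makes every such denominator a nonzero non-integer at each $w\in I$; hence all the operators $E_{k,k+1},E_{k+1,k},E_{kk},c_{rs}$ are well defined on $V(T(v))$. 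To check that they satisfy the relations of $\gl(n)$, and that $c_{rs}$ acts on $T(w)$ by $\gamma_{rs}(w)$, I would use a Zariski-density argument: for a fixed tableau shape each relation, evaluated on $T(w)$, is a rational identity in $w$ that holds on the set of standard tableaux by Theorem~\ref{Gelfand-Tsetlin theorem}, and that set is dense, so the identity holds everywhere, in particular on $I$. This is the argument of \cite[\S2.3]{DFO3}; it yields (ii), (iv) and the module structure asserted in (i).

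Next, for (iii): since each $c_{rs}$ acts diagonally on $\{T(w)\}_{w\in I}$ with eigenvalue $\gamma_{rs}(w)$, the module $V(T(v))$ is the direct sum of the joint eigenspaces of $\Ga$, each spanned by the tableaux sharing a fixed tuple $(\gamma_{rs}(w))$, so $V(T(v))$ is a Gelfand-Tsetlin module and $\Ga$ acts semisimply on it. I would then show distinct $w,w'\in I$ give distinct tuples: if the tuples agree, then, because $\imath(\Ga)=\Lambda^{G}$ with $G=S_n\times\cdots\times S_1$ (\cite{Zh}) and the invariants of a finite group separate its orbits, $w'$ is obtained from $w$ by permuting entries within rows; but the top rows coincide, and for $r\le n-1$ one has $w_{ri}-w_{rj}\notin\mathbb{Z}$ when $i\ne j$ by genericity, so membership of $w'$ in $I$ forces the permutation to be the identity. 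Hence $w=w'$ and every Gelfand-Tsetlin multiplicity of $V(T(v))$ is $1$.

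The remaining and main point is finite length. Let $N\subseteq V(T(v))$ be a submodule; being $\Ga$-stable, and by multiplicity one, $N=\bigoplus_{w\in S}\mathbb{C}\,T(w)$ for a unique $S\subseteq I$. By genericity the coefficient of $T(w\pm\delta^{ki})$ in the relevant generator applied to $T(w)$ is nonzero precisely when the product of linear factors in (\ref{GT formulas}) is nonzero; let $\rightsquigarrow$ be the preorder on $I$ generated by these nonzero transitions. Then $N\mapsto S$ identifies the submodule lattice of $V(T(v))$ with the lattice of $\rightsquigarrow$-closed subsets of $I$, so it is enough to prove that $\rightsquigarrow$ has finitely many equivalence classes. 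The only obstructions to moves are the ``walls'' $w_{ki}=w_{k+1,j}$ coming from pairs with $v_{ki}-v_{k+1,j}\in\mathbb{Z}$, of which there are finitely many; I would check, by inspecting which linear factors occur in (\ref{GT formulas}), that no generator can move a tableau from $\{w\in I : w_{ki}\le w_{k+1,j}\}$ to its complement, so this set is $\rightsquigarrow$-closed and each $\rightsquigarrow$-class lies entirely on one side of it. Consequently a class is determined by the finite sign vector $(\sgn(w_{ki}-w_{k+1,j}))$ taken over the walls, so there are finitely many classes; the $\rightsquigarrow$-closed sets then form the finite lattice of order ideals of the induced partial order on the classes, which shows $V(T(v))$ has finite length, equal to the number of classes, and in particular is finitely generated.

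The one non-formal step will be the wall-crossing verification, and I expect this to be the main obstacle: one must check, separately for $E_{k,k+1}$ and $E_{k+1,k}$ and for the moves of the neighbouring rows that shift $w_{k+1,j}$, that whenever a transition would change $\sgn(w_{ki}-w_{k+1,j})$ the factor $(w_{ki}-w_{k+1,j})$ or $(w_{k+1,j}-w_{ki})$ already appears in the transition coefficient and forces it to vanish. Everything else is the density argument of \cite[\S2.3]{DFO3} or routine bookkeeping with the multiplicity-one property.
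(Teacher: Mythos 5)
The paper does not prove this statement; it is cited to \cite{DFO3} and \cite{Maz2}, so there is nothing in the paper to compare against and I evaluate your plan on its own merits. Your treatment of (i), (ii), (iv) via Zariski density of the standard tableaux, and of (iii) via the separating property of the $G$-invariants together with genericity, is sound and is indeed the standard route.

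The finite-length step, however, has a genuine gap. After reducing (by multiplicity one) the submodule lattice to the lattice of $\rightsquigarrow$-closed subsets of $I$, you observe that each ``wall'' set $\{w : w_{ki}\le w_{k+1,j}\}$ is $\rightsquigarrow$-closed, and conclude that every $\rightsquigarrow$-class is contained in a single sign-vector level set. But that only makes the map $\{\text{classes}\}\to\{\text{sign vectors}\}$ well defined; it does \emph{not} bound the number of classes, since a priori a single level set could split into infinitely many classes. To conclude finitely many classes you need the reverse implication: any two tableaux with the same sign data (equivalently the same $\Omega^{+}$) are mutually reachable by a chain of moves with nonzero coefficients. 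That is exactly the nontrivial content of Theorem~\ref{Basis for irreducible generic modules gl(n)}(ii) (Theorem~6.14 of \cite{FGR2}) and requires an explicit path construction inside the Gelfand--Tsetlin formulas, not merely the wall-crossing vanishing you describe. As written, your plan establishes that classes do not cross walls, but not that tableaux on the same side of all walls form one class, so the claimed finiteness of classes (and hence finite length) does not yet follow. This is the missing key lemma; the rest of the plan is fine.
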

We will  denote  the module constructed in  Theorem \ref{Generic Gelfand-Tsetlin modules} by $V(T(v))$. Note that $V(T(v))$ need not to be irreducible.  Because $\Gamma$ has simple spectrum on $V(T(v))$ for $T(w)$ in $V(T(v))$ we may define the \emph{irreducible $\mathfrak{gl}(n)$-module in $V(T(v))$ containing $T(w)$} to be the subquotient of $V(T(v))$ containing $T(w)$ (see Theorem \ref{Generic Gelfand-Tsetlin modules}(i)).
A basis for the irreducible subquotients of $V(T(v))$  can be described in terms of the following set.
\begin{equation}\label{definition of Omega+}
\Omega^{+}(T(w)):=\{(r,s,u)\ |\ w_{rs}-w_{r-1,u}\in \mathbb{Z}_{\geq 0}\}.
\end{equation}
\begin{theorem}[Theorem 6.14, in \cite{FGR2}]\label{Basis for irreducible generic modules gl(n)}
Let $T(v)$ be a generic tableau and let $T(w)$ be a tableau in $V(T(v))$. Then the following hold.
\begin{itemize}
\item[(i)] The submodule of $V(T(v))$ generated by $T(w)$  has basis $$\mathcal{N}(T(w)):=\{T(w')\in V(T(v))\ |\ \Omega^{+}(T(w))\subseteq \Omega^{+}(T(w'))\};$$
\item[(ii)] The irreducible $\mathfrak{gl}(n)$-module in $V(T(v))$ containing $T(w)$ has basis
$$\mathcal{I}(T(w)):=\{T(w')\in V(T(v))\ |\ \Omega^{+}(T(w))=\Omega^{+}(T(w'))\}.$$
\end{itemize}
The action of $\mathfrak{gl}(n)$ on both $\mathcal{N}(T(w))$ and $\mathcal{I}(T(w))$ is given by the Gelfand-Tsetlin formulas. 
\end{theorem}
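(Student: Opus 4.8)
The plan is to reduce the statement to a combinatorial analysis of the sets $\Omega^{+}$, exploiting that $V(T(v))$ has simple Gelfand--Tsetlin spectrum. I first record the organizing principle: since all Gelfand--Tsetlin multiplicities of $V(T(v))$ equal $1$ (Theorem~\ref{Generic Gelfand-Tsetlin modules}(iii)) and distinct basis tableaux have distinct Gelfand--Tsetlin characters, every $U$-submodule of $V(T(v))$ is spanned by a subset of the tableau basis, and a linear combination of basis tableaux lies in a submodule $M$ iff every term with nonzero coefficient does. So the whole theorem is governed by one question: for adjacent basis tableaux, when is the relevant coefficient in the Gelfand--Tsetlin formulas (\ref{GT formulas}) nonzero? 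The key lemma I would prove is that, for $T(w)\in V(T(v))$ and $1\le i\le k\le n-1$, the coefficient of $T(w+\delta^{ki})$ in $E_{k,k+1}(T(w))$ (resp.\ of $T(w-\delta^{ki})$ in $E_{k+1,k}(T(w))$) is nonzero if and only if $\Omega^{+}(T(w))\subseteq\Omega^{+}(T(w+\delta^{ki}))$ (resp.\ $\Omega^{+}(T(w))\subseteq\Omega^{+}(T(w-\delta^{ki}))$); moreover, when it is nonzero the only change is that $\Omega^{+}$ can grow. The proof is direct inspection: the denominator $\prod_{j\ne i}(w_{ki}-w_{kj})$ is nonzero by genericity of $T(v)$ (it depends only on the $k$-th row, $k\le n-1$); raising $w_{ki}$ by one enlarges every $w_{ki}-w_{k-1,u}$ and shrinks every $w_{k+1,s}-w_{ki}$, so the only triples that can leave $\Omega^{+}$ are those $(k+1,s,i)$ with $w_{k+1,s}=w_{ki}$, which is precisely the vanishing locus of the numerator $\prod_{s=1}^{k+1}(w_{ki}-w_{k+1,s})$; the case of $E_{k+1,k}$ is symmetric, with numerator $\prod_{s=1}^{k-1}(w_{ki}-w_{k-1,s})$ (empty, hence nonzero, when $k=1$).

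From the lemma I would immediately conclude that $\mathcal{N}(T(w))$ is a $U$-submodule: applying any $E_{k,k+1}$ or $E_{k+1,k}$ to a basis tableau $T(w_{0})$ with $\Omega^{+}(T(w))\subseteq\Omega^{+}(T(w_{0}))$ produces, with nonzero coefficients, only basis tableaux whose $\Omega^{+}$ contains $\Omega^{+}(T(w_{0}))$, hence contains $\Omega^{+}(T(w))$, while the $E_{kk}$ act diagonally. As $T(w)\in\mathcal{N}(T(w))$, this gives $U\cdot T(w)\subseteq\mathcal{N}(T(w))$; the same argument shows that $N:=\Span\{T(w')\in V(T(v))\mid\Omega^{+}(T(w))\subsetneq\Omega^{+}(T(w'))\}$ is a $U$-submodule. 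What remains for (i) is the reverse inclusion $\mathcal{N}(T(w))\subseteq U\cdot T(w)$, and by the lemma together with the organizing principle this is equivalent to: whenever $\Omega^{+}(T(w))\subseteq\Omega^{+}(T(w'))$, there is a chain of basis tableaux $w=w^{(0)},w^{(1)},\dots,w^{(m)}=w'$ with $w^{(j+1)}=w^{(j)}\pm\delta^{k_{j}i_{j}}$ along which $\Omega^{+}$ is non-decreasing (each step then has nonzero coefficient, and one walks up the chain by induction).

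Proving the existence of this chain is the main obstacle. I would construct it by performing, at each step, either one move for which $\Omega^{+}$ strictly grows but does not exceed $\Omega^{+}(T(w'))$, or one move towards the value of an entry in $w'$ while keeping $\Omega^{+}$ unchanged; since there are finitely many triples and the $\ell^{1}$-distance to $w'$ drops at each move of the second kind, the process terminates at $w'$. By the transition lemma a move raising the $(k,i)$-entry of a tableau has nonzero coefficient iff that entry avoids every entry of row $k+1$; among such moves $\Omega^{+}$ is unchanged iff moreover $w_{ki}+1$ avoids every entry of row $k-1$, and strictly enlarges otherwise (dually for lowering). The real point is to show that at every stage an entry and a direction with the required effect on $\Omega^{+}$ can actually be selected; I would prove this by an induction organized via a careful choice of the entry to move (for instance a suitable extremal entry among those where the current tableau and the target differ, processing the rows in a fixed order), or, alternatively, by induction on $n$ after restricting $V(T(v))$ to $\gl(n-1)$, under which it decomposes as a direct sum of generic $\gl(n-1)$-modules. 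Granting the chain, part (i) follows.

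For (ii): with $N$ as above, any proper $U$-submodule $M'$ of $\mathcal{N}(T(w))=U\cdot T(w)$ cannot contain $T(w)$, and, by (i) applied to any tableau $T(w')$ with $\Omega^{+}(T(w'))=\Omega^{+}(T(w))$ (which generates $T(w)$), it cannot contain such a $T(w')$ either; being spanned by basis tableaux, $M'\subseteq N$. Hence $N$ is the unique maximal $U$-submodule of $\mathcal{N}(T(w))$, so $\mathcal{N}(T(w))/N$ is irreducible, and its basis is the set of images of the basis tableaux $T(w')$ with $\Omega^{+}(T(w'))=\Omega^{+}(T(w))$, i.e.\ $\mathcal{I}(T(w))$. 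Finally, $\mathcal{N}(T(w))/N$ is a subquotient of $V(T(v))$ whose Gelfand--Tsetlin support contains the character of $T(w)$; since that character has Gelfand--Tsetlin multiplicity $1$ in $V(T(v))$, it occurs in exactly one subquotient of any composition series, and an irreducible subquotient of $V(T(v))$ with $T(w)$ in its support is therefore unique. Hence $\mathcal{N}(T(w))/N$ is the irreducible $\gl(n)$-module in $V(T(v))$ containing $T(w)$, with the $\gl(n)$-action induced by the Gelfand--Tsetlin formulas. The main difficulty throughout is the connectedness claim in the third paragraph; once it is available, the rest is the transition lemma plus bookkeeping.
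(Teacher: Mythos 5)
The paper itself does not prove this statement; it imports it verbatim as Theorem~6.14 of~\cite{FGR2}, so there is no in-text proof to compare yours against. Judged on its own, your architecture is the right one and matches the general strategy one expects for generic modules: the simple Gelfand--Tsetlin spectrum forces every submodule to be spanned by a subset of the tableau basis; the transition lemma you state (nonvanishing of the coefficient of $T(w\pm\delta^{ki})$ in $E_{k,k+1}T(w)$ or $E_{k+1,k}T(w)$ is \emph{equivalent} to $\Omega^{+}$ not shrinking, because the denominator is controlled by genericity and the numerator vanishes precisely at an integer coincidence with the adjacent row) is correct, and both directions check out since a single move can only create triples of type $(k,i,u)$ and only destroy triples of type $(k+1,s,i)$ or $(k,i,u)$ when the numerator vanishes; and your part (ii) argument, that $N=\operatorname{Span}\{T(w'):\Omega^{+}(T(w))\subsetneq\Omega^{+}(T(w'))\}$ is the unique maximal submodule of $U\cdot T(w)$ and that multiplicity one pins down the irreducible subquotient through $T(w)$, is sound granting (i).

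The genuine gap is the one you flag yourself: the connectedness claim, that $\Omega^{+}(T(w))\subseteq\Omega^{+}(T(w'))$ implies a chain of unit moves $w=w^{(0)},\dots,w^{(m)}=w'$ each with nonzero coefficient. You sketch two strategies but carry out neither, and the naive greedy version is not obviously correct. Concretely, a valid raise of $w_{ki}$ may \emph{add} a triple $(k,i,u)$ (when $w_{ki}-w_{k-1,u}$ passes from $-1$ to $0$) that is \emph{not} in $\Omega^{+}(T(w'))$ if $w'_{k-1,u}$ differs from $w_{k-1,u}$; since $\Omega^{+}$ is non-decreasing along any admissible chain, once you overshoot in this way you can never reach $T(w')$. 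So the ordering of moves matters and must be chosen to avoid such overshoots; likewise a desired raise of $w_{ki}$ can be blocked by an entry of row $k+1$ that still needs to be moved. Establishing that an admissible ordering always exists (e.g.\ by the $\mathfrak{gl}(n-1)$-restriction induction you mention, or by a row-by-row processing with the right extremal choice of entry) is precisely the nontrivial combinatorial content of the FGR2 proof, and without it part (i) --- and hence part (ii) --- is not yet a proof. Everything else in your write-up is correct bookkeeping around that missing lemma.
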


\subsection{Gelfand-Tsetlin formulas in terms of permutations}
\medskip

In this subsection we rewrite and generalize the Gelfand-Tsetlin formulas in Theorem \ref{Gelfand-Tsetlin theorem} in convenient for us terms.  

Recall the convention  that for a vector  $v=(v_{n1},...,v_{nn}|\cdots|v_{11})$ in $\mathbb{C}^{\frac{n(n+1)}{2}}$, by $T(v)$, we  denote the corresponding to $v$ Gelfand-Tsetlin tableau.  Let us call $v$ in ${\mathbb C}^{\frac{n(n+1)}{2}}$ {\it generic} if $T(v)$ is a generic Gelfand-Tsetlin tableau, and denote by ${\mathbb C}^{\frac{n(n+1)}{2}}_{\rm gen}$ the set of all generic vectors in ${\mathbb C}^{\frac{n(n+1)}{2}}$.

Remember that $G=S_n \times \cdots \times  S_1$. Let $\widetilde{S}_m$ denotes the subset of $S_m$ consisting of the transpositions $(1,i)$, $i=1,...,m$. Also, we consider every $\sigma\in \widetilde{S}_m $ as an element of $G$ by letting  $\sigma [t] = \mbox{Id}$ whenever  $t\neq m$.

\begin{definition} \label{def-e-lm}
Let $1 \leq r <\leq n-1$. Set
$$
\varepsilon_{r,r+1}:=\delta^{r,1}\in \mathbb{Z}^{\frac{n(n+1)}{2}}.
$$
Furthermore, define $\varepsilon_{rr}=0$ and  $\varepsilon_{r+1,r}=- \delta^{r,1}$.
\end{definition}

\begin{definition}\label{definition of coefficients e_rs}
For each generic vector $w$ and any $1\leq r, s\leq n$ we define
\begin{equation}\label{definition of e_ij}
\begin{split}
e_{r,r+1}(w)&:=- \frac{\prod_{j=1}^{r+1}(w_{r1}-w_{r+1,j})}{\prod_{j\neq 1}^{r}(w_{r1}-w_{rj})},\\
e_{r+1,r}(w)&:= \frac{\prod_{j=1}^{r-1}(w_{r1}-w_{k-1,j})}{\prod_{j\neq 1}^{k}(w_{r1}-w_{rj})},\\
e_{rr}(w)&:=r-1+\sum\limits_{i=1}^{r}w_{ri}-\sum\limits_{i=1}^{r-1}w_{r-1,i}
\end{split}
\end{equation}
\end{definition}

By using permutations we can rewrite the Gelfand-Tsetlin formulas (\ref{GT formulas}) as follows:

\begin{proposition}\label{GT formulas in terms of permutations} Let $v \in {\mathbb C_{\rm gen}^{\frac{n(n+1)}{2}}}$. The Gelfand-Tsetlin formulas for the generic Gelfand Tsetlin $\mathfrak{gl}(n)$-module  $V(T(v))$ can be written as follows:
$$
E_{\ell m} (T(v+z))= \sum_{\sigma \in \tilde{S}_{r}} e_{\ell m} (\sigma (v+z)) T(v+z+\sigma(\varepsilon_{\ell m})),$$
where $(\ell,m)\in\{(r,r+1), (r+1,r), (r,r)\}$ and $z \in {\mathbb Z^{\frac{n(n+1)}{2}}}$ has top row zero. 
\end{proposition}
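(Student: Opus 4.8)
The plan is to reduce the statement to the classical Gelfand-Tsetlin formulas of Theorem~\ref{Gelfand-Tsetlin theorem} by a direct reorganization of the sum, checking that the only surviving terms on the right-hand side are exactly those indexed by the transpositions $(1,i)$. First I would fix $(\ell,m)=(r,r+1)$ and write out $E_{r,r+1}(T(v+z))$ using \eqref{GT formulas}: it is a sum over $i=1,\dots,r$ of the coefficient $-\prod_{j=1}^{r+1}((v+z)_{ri}-(v+z)_{r+1,j})/\prod_{j\neq i}^{r}((v+z)_{ri}-(v+z)_{rj})$ times $T(v+z+\delta^{ri})$. The key observation is that applying the transposition $\sigma=(1,i)\in\widetilde S_r$ to the argument $v+z$ permutes the indices $r1$ and $ri$ in row $r$ only; hence $e_{r,r+1}(\sigma(v+z))$ is precisely the $i$-th summand's coefficient above, because the defining formula \eqref{definition of e_ij} for $e_{r,r+1}$ singles out the first entry $w_{r1}$, and after the swap that role is played by the old $(v+z)_{ri}$. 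Likewise $\sigma(\varepsilon_{r,r+1})=\sigma(\delta^{r,1})=\delta^{r,i}$, so the tableau shift matches. Summing over $\sigma\in\widetilde S_r$ thus reproduces the sum over $i=1,\dots,r$ term by term, giving the claimed identity for $(\ell,m)=(r,r+1)$.

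Next I would treat $(\ell,m)=(r+1,r)$ in exactly the same way, now using the second line of \eqref{GT formulas} and of \eqref{definition of e_ij}, with $\varepsilon_{r+1,r}=-\delta^{r,1}$ so that $\sigma(\varepsilon_{r+1,r})=-\delta^{r,i}$ and the shift is $T(v+z-\delta^{ri})$, again matching the $i$-th classical summand. For the diagonal case $(\ell,m)=(r,r)$ one has $\varepsilon_{rr}=0$, and since $e_{rr}(w)=r-1+\sum_i w_{ri}-\sum_i w_{r-1,i}$ is symmetric under any permutation of the entries of row $r$, every term $e_{rr}(\sigma(v+z))$ equals the single eigenvalue $e_{rr}(v+z)$ appearing in \eqref{GT formulas}; strictly speaking the sum over $\widetilde S_r$ would then produce $r$ copies, so here one either restricts the convention for $E_{rr}$ to the identity term or notes (as is implicit in the setup) that $\widetilde S_r$ acts trivially in the diagonal case — I would state this normalization explicitly to keep the three cases uniform.

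One genuinely substantive point deserves care rather than being dismissed as routine: in the classical formulas the summand indexed by $i$ is \emph{declared} to be zero whenever $T(v+z+\delta^{ri})$ fails to be standard, whereas in the generic/permutation version every term is literally present. Since we work inside the generic module $V(T(v))$ with $v\in\mathbb C_{\mathrm{gen}}^{n(n+1)/2}$, all denominators $\prod_{j\neq i}((v+z)_{ri}-(v+z)_{rj})$ are nonzero and no standardness truncation occurs, so the two conventions agree; I would remark on this to justify that no terms are being silently dropped or added. The main obstacle, then, is not conceptual difficulty but bookkeeping: one must verify that the relabeling $i\leftrightarrow$ ``position of $w_{r1}$ after applying $(1,i)$'' is consistent across the numerator product (which runs over all $j=1,\dots,r+1$ in the $(r,r+1)$ case and is automatically permutation-covariant) and the denominator product (which excludes $j=1$, matching the exclusion of $j=i$ in the classical formula). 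Once this correspondence is laid out, the proposition follows by inspection, and the same argument will later extend verbatim to the $1$-singular setting where $\widetilde S_r$ is replaced by a larger subset of $S_r$.
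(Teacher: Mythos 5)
The paper states this proposition without any proof at all, treating it as a transparent rewriting of the classical Gelfand--Tsetlin formulas from Theorem~\ref{Gelfand-Tsetlin theorem} using the coefficients of Definition~\ref{definition of coefficients e_rs}, so there is no proof in the text to compare against. Your argument supplies the missing verification and does so correctly: the bijection between the transpositions $\sigma=(1,i)\in\widetilde S_r$ and the summation index $i=1,\dots,r$ in \eqref{GT formulas}, the identity $\sigma(\varepsilon_{r,r+1})=\delta^{ri}$, the matching of numerator and denominator products after the swap, and the remark that genericity of $v$ eliminates the standardness truncation of Theorem~\ref{Gelfand-Tsetlin theorem} are exactly the points that need to be checked, and you check them.

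Your flag on the diagonal case is not pedantry but a genuine defect in the proposition as stated. For $(\ell,m)=(r,r)$ one has $\varepsilon_{rr}=0$ and $e_{rr}$ symmetric in the entries of row $r$, so the displayed sum over the $r$ elements of $\widetilde S_r$ produces $r\cdot e_{rr}(v+z)\,T(v+z)$, which overcounts the eigenvalue by a factor of $r$. The proposition is only used later for the off-diagonal generators, where your term-by-term matching is exact, but as written the $(r,r)$ case needs either the convention that the sum is taken over $\sigma=\mathrm{Id}$ only, or the index set $\widetilde S_r$ should be replaced by a singleton in the diagonal case; stating this normalization explicitly, as you suggest, is the right fix. (You might also note in passing that Definition~\ref{definition of coefficients e_rs} contains typos --- $k$ in place of $r$ in the formula for $e_{r+1,r}$ --- which your reconstruction silently corrects.)
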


\section{Singular Gelfand-Tsetlin modules}\label{sec-der}
In this section we will remember the construction of singular Gelfand-Tsetlin modules given in \cite{FGR3}.

\begin{definition}
A vector $v\in\mathbb{C}^{\frac{n(n+1)}{2}}$ will be called \emph{singular} if there exist $1\leq s<t\leq r\leq n-1$ such that $v_{rs}-v_{rt}\in\mathbb{Z}$. The vector $v$ will be called  \emph{$1$-singular} if  there exist $i,j,k$ with $1\leq i < j \leq k \leq n-1$ such that $v_{ki}-v_{kj}\in\mathbb{Z}$ and $v_{rs}-v_{rt}\notin\mathbb{Z}$ for all $(r,s,t) \neq (k,i,j)$. 
\end{definition}

{\it From now on we fix $(i,j,k)$ such that  $1\leq i < j \leq k\leq n-1$}. In \cite{FGR3}, associated with any $1$-singular tableau $T(\bar{v})$ is constructed a Gelfand-Tsetlin module $V(T(\bar{v}))$ and explicit formulas for the action of the generators of $\mathfrak{gl}(n)$ and the generators of $\Gamma$ is given, in this section we will remember  that construction.

Let us fix a $1$-singular vector $\bar{v}$ such that $\bar{v}_{ki}-\bar{v}_{kj}=0$. {\it From now on by $\tau$ we denote  the element $(\tau[n],\ldots,\tau[2],\tau[1])$ in $G$ such that $\tau[k]$ is the transposition $(i,j)$  and all other $\tau[t]$ are $\mbox{Id}$.}  We formally introduce  new tableaux  ${\mathcal D} T({\bar{v}} + w)$ for  every $w \in {\mathbb Z}^{\frac{n(n-1)}{2}}$ subject to the relations ${\mathcal D} T({\bar{v}} + w) + {\mathcal D} T({\bar{v}} + \tau(w)) = 0$. We call ${\mathcal D} T(\bar{v}+w)$  {\it the derivative Gelfand-Tsetlin tableau} associated with $w$. 

\begin{definition}
We set  $V(T(\bar{v}))$ to be the vector space spanned by the set of tableaux $\{ T(\bar{v} + w), \,\mathcal{D} T(\bar{v} + w) \; | \; w \in {\mathbb Z}^{\frac{n(n-1)}{2}}\}$, subject to the relations $T(\bar{v} + w) = T(\bar{v} +\tau(w))$ and ${\mathcal D} T(\bar{v} + w) + {\mathcal D} T(\bar{v} +\tau(w)) = 0$. We also fix a basis of $V(T(\bar{v}))$ to be the  set $\{Tab(w):w\in\mathbb{Z}^{\frac{n(n-1)}{2}}\}$, where 
$$Tab(w):=\begin{cases}
T(\bar{v}+w), & \mbox{if } \ w_{ki}-w_{kj}\leq 0\\
\mathcal{D} T(\bar{v}+w), & \mbox{if } \ w_{ki}-w_{kj}> 0
\end{cases}$$
\end{definition}

For a variables vector $v$ and $f$ is a rational function on variables $v_{rs}$ which is smooth on the hyperplane $v_{ki}-v_{kj}=0$ we can define  the linear map 
$$
\mathcal{D}^{\bar{v}} (f T(v+z)) = \mathcal{D}^{\bar{v}} (f) T(\bar{v}+z) +   f(\bar{v}) \mathcal{D} T(\bar{v}+z),
$$ 
where $\mathcal{D}^{\bar{v}}(f) = \frac{1}{2}\left(\frac{\partial f}{\partial v_{ki}}-\frac{\partial f}{\partial v_{kj}}\right)(\bar{v})$.
The following lemma will be useful in order to do some computations.
\begin{lemma}\label{Properties of D on functions}
Let $f$ be a rational function on variables $v_{rs}$ smooth on the hyperplane $v_{ki}-v_{kj}=0$. Then,
\begin{itemize}
\item[(i)] $\mathcal{D}^{\bar{v}}((v_{ki}-v_{kj})f)=f(\bar{v})$.
\item[(ii)] If $f$ is symmetric with respect to $v_{ki}$ and $v_{kj}$ then, $\mathcal{D}^{\bar{v}}(f)=0$.
\end{itemize}
\end{lemma}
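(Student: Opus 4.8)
The plan is to establish both parts by a direct computation with partial derivatives, using nothing beyond the Leibniz rule, the chain rule, and the defining relation $\bar v_{ki}=\bar v_{kj}$.

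For part (i), I would compute the derivative combination defining $\mathcal{D}^{\bar v}$ applied to the product $(v_{ki}-v_{kj})f$ \emph{before} evaluating at $\bar v$. The Leibniz rule gives
\begin{align*}
\frac{\partial}{\partial v_{ki}}\bigl((v_{ki}-v_{kj})f\bigr)&=f+(v_{ki}-v_{kj})\frac{\partial f}{\partial v_{ki}},\\
\frac{\partial}{\partial v_{kj}}\bigl((v_{ki}-v_{kj})f\bigr)&=-f+(v_{ki}-v_{kj})\frac{\partial f}{\partial v_{kj}},
\end{align*}
so that
\[
\tfrac12\Bigl(\tfrac{\partial}{\partial v_{ki}}-\tfrac{\partial}{\partial v_{kj}}\Bigr)\bigl((v_{ki}-v_{kj})f\bigr)=f+\tfrac12(v_{ki}-v_{kj})\Bigl(\frac{\partial f}{\partial v_{ki}}-\frac{\partial f}{\partial v_{kj}}\Bigr).
\]
Evaluating at $\bar v$: since $f$ is smooth on the hyperplane $v_{ki}-v_{kj}=0$, both partials on the right are finite at $\bar v$, while $v_{ki}-v_{kj}$ vanishes there; hence the second term drops out and one is left with $f(\bar v)$.

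For part (ii), let $\sigma$ be the involution of the variable set interchanging $v_{ki}$ and $v_{kj}$ and fixing every other $v_{rs}$. The hypothesis that $f$ is symmetric in $v_{ki}$ and $v_{kj}$ means $f\circ\sigma=f$ as rational functions. Differentiating this identity with respect to $v_{ki}$ and using the chain rule gives $\dfrac{\partial f}{\partial v_{ki}}=\Bigl(\dfrac{\partial f}{\partial v_{kj}}\Bigr)\circ\sigma$. Because $\bar v_{ki}=\bar v_{kj}$, the point $\bar v$ is fixed by $\sigma$, so evaluating at $\bar v$ yields $\dfrac{\partial f}{\partial v_{ki}}(\bar v)=\dfrac{\partial f}{\partial v_{kj}}(\bar v)$, and therefore $\mathcal{D}^{\bar v}(f)=\tfrac12\bigl(\tfrac{\partial f}{\partial v_{ki}}(\bar v)-\tfrac{\partial f}{\partial v_{kj}}(\bar v)\bigr)=0$.

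Both statements are elementary, and I expect no genuine obstacle. The only point deserving a word of care is that part (i) uses the smoothness hypothesis essentially: it is exactly what guarantees that the term $(v_{ki}-v_{kj})\bigl(\partial f/\partial v_{ki}-\partial f/\partial v_{kj}\bigr)$ evaluates to $0$ at $\bar v$ rather than to an indeterminate $0\cdot\infty$.
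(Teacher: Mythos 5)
Your proof is correct, and in fact the paper states this lemma without giving any proof (the map $\mathcal{D}^{\bar v}$ and its basic properties are imported from the construction in \cite{FGR3}), so there is no paper argument to compare against. Your direct computation via the Leibniz rule for part (i), together with the observation that smoothness near the hyperplane makes the factor $(v_{ki}-v_{kj})\bigl(\partial f/\partial v_{ki}-\partial f/\partial v_{kj}\bigr)$ vanish at $\bar v$ rather than produce an indeterminate form, and the chain-rule argument for part (ii) using the fact that $\bar v$ is a fixed point of the transposition $\sigma$, are exactly the expected elementary arguments.
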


\begin{theorem}[\cite{FGR3} Theorems 4.11 and 4.12]
$ V(T(\bar{v}))$ is an $1$-singular  Gelfand-Tsetlin $\mathfrak{gl} (n)$-module, with action of the generators of $\mathfrak{gl}(n)$ given by
\begin{equation}\label{GT formulas for regular tableaux}
E_{rs}(T(\bar{v} + z))=\  \mathcal{D}^{\bar{v}}((v_{ki} - v_{kj})E_{rs}(T(v + z)))
\end{equation}
\begin{equation}\label{GT formulas for derivative tableaux}
E_{rs}(\mathcal{D}T(\bar{v} + z')))=\ \mathcal{D}^{\bar{v}} ( E_{rs}(T(v + z'))),
\end{equation}
and the action of the generators of $\Gamma$ can be written explicitly as follows:
\begin{equation}\label{Gamma acting on T}
c_{rs}(T(\bar{v}+z))=\gamma_{rs}(\bar{v}+z)T(\bar{v}+z)
\end{equation}
\begin{equation}\label{Gamma acting on DT}
c_{rs}(\mathcal{D}T(\bar{v}+z'))= \gamma_{rs}(\bar{v}+z')\mathcal{D}T(\bar{v}+z')+\mathcal{D}^{\overline{v}}(\gamma_{rs}(v+z'))T(\bar{v}+z')
\end{equation}
for any $z,z'\in\mathbb{Z}^{\frac{n(n-1)}{2}}$ with $z'\neq\tau(z')$.
\end{theorem}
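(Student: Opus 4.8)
The plan is to verify directly that the formulas (\ref{GT formulas for regular tableaux})--(\ref{Gamma acting on DT}) are well defined and satisfy the required relations, organising everything around the operator $\mathcal{D}^{\bar v}$: since these formulas are obtained by applying $\mathcal{D}^{\bar v}$ to the generic Gelfand--Tsetlin formulas (\ref{GT formulas}) for a deformation of $T(\bar v)$, the idea is to \emph{transfer} through $\mathcal{D}^{\bar v}$ the identities that already hold in generic modules (Theorem \ref{Generic Gelfand-Tsetlin modules}), using the Leibniz-type properties in Lemma \ref{Properties of D on functions}. Four things must be checked: (a) compatibility of the formulas with the defining relations $T(\bar v+w)=T(\bar v+\tau(w))$ and $\mathcal{D}T(\bar v+w)+\mathcal{D}T(\bar v+\tau(w))=0$; (b) the commutation relations of $\mathfrak{gl}(n)$; (c) the action (\ref{Gamma acting on T})--(\ref{Gamma acting on DT}) of the $c_{rs}$; (d) that $V(T(\bar v))$ is a finitely generated Gelfand--Tsetlin module, $1$-singular in the expected sense. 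For (a) I would first record that the generic construction is $\tau$-equivariant: the matrix coefficients of each operator (\ref{GT formulas}) applied to $T(y)$ depend on row $k$ of $y$ only through the multiset of its entries and through which entry (if any) is being moved, whence $V(T(v))\cong V(T(\tau v))$ via $T(w)\mapsto T(\tau(w))$ for generic $v$; since $\tau(\bar v)=\bar v$ and $\mathcal{D}^{\bar v}$ is anti-invariant under the transposition $(i,j)$ of row $k$, expanding $E_{rs}(T(v+w))$ and $E_{rs}(T(v+\tau(w)))$ near $\bar v$ and matching terms shows that (\ref{GT formulas for regular tableaux}) respects the first relation and (\ref{GT formulas for derivative tableaux}) the second (the minus sign being forced).

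For (b), the Cartan relations are immediate: by Lemma \ref{Properties of D on functions}(i) for regular and (ii) for derivative tableaux (the weight $r-1+\sum_i v_{ri}-\sum_i v_{r-1,i}$ being symmetric in $v_{ki},v_{kj}$), each $E_{rr}$ acts on every $Tab(w)$ by the expected Gelfand--Tsetlin weight, so $\mathfrak h$ acts diagonally with the right weights. The remaining relations follow from the \emph{transfer principle}: for each word $X=E_{r_1s_1}\cdots E_{r_\ell s_\ell}$ in the generators, with $X^{\mathrm{gen}}$ the operator obtained by iterating (\ref{GT formulas}) on expressions $f\,T(v+z)$ over the field of rational functions in the $v_{rs}$, and $X^{\mathrm{sing}}$ the operator obtained by iterating (\ref{GT formulas for regular tableaux})--(\ref{GT formulas for derivative tableaux}) on $V(T(\bar v))$, one should have
\[
X^{\mathrm{sing}}(T(\bar v+z))=\mathcal{D}^{\bar v}\!\bigl((v_{ki}-v_{kj})\,X^{\mathrm{gen}}(T(v+z))\bigr),\qquad X^{\mathrm{sing}}(\mathcal{D}T(\bar v+z'))=\mathcal{D}^{\bar v}\!\bigl(X^{\mathrm{gen}}(T(v+z'))\bigr)
\]
for $z'\neq\tau(z')$. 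The case $\ell=1$ is the definition (\ref{GT formulas for regular tableaux})--(\ref{GT formulas for derivative tableaux}); the inductive step applies one more generator to $\mathcal{D}^{\bar v}$ of the word already treated and re-expands, using Lemma \ref{Properties of D on functions} and the fact that multiplication by $(v_{ki}-v_{kj})$ interchanges the two normalizations. Granting this, for arbitrary $r,s,t,u$ one gets $[E_{rs},E_{tu}]^{\mathrm{sing}}(T(\bar v+z))=\mathcal{D}^{\bar v}\bigl((v_{ki}-v_{kj})[E_{rs},E_{tu}]^{\mathrm{gen}}(T(v+z))\bigr)=(\delta_{st}E_{ru}-\delta_{ur}E_{ts})^{\mathrm{sing}}(T(\bar v+z))$, since (\ref{GT formulas}) satisfy the $\mathfrak{gl}(n)$ relations identically as rational functions of $v$; the same computation with $\mathcal{D}T(\bar v+z')$ in place of $T(\bar v+z)$ completes (b).

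Task (c) is the same mechanism applied to $c_{rs}(T(v))=\gamma_{rs}(v)T(v)$ (see (\ref{action of Gamma in finite dimensional modules})), using that each $\gamma_{rs}$ is a polynomial invariant under permutations of $v_{r1},\dots,v_{rr}$: Lemma \ref{Properties of D on functions}(i) gives $c_{rs}(T(\bar v+z))=\gamma_{rs}(\bar v+z)T(\bar v+z)$, and the definition of $\mathcal{D}^{\bar v}$ gives $c_{rs}(\mathcal{D}T(\bar v+z'))=\gamma_{rs}(\bar v+z')\,\mathcal{D}T(\bar v+z')+\mathcal{D}^{\bar v}(\gamma_{rs}(v+z'))\,T(\bar v+z')$, the correction term being generally nonzero precisely because $\gamma_{rs}(v+z')$ is symmetric in $v_{ki}+z'_{ki}$ and $v_{kj}+z'_{kj}$, not in $v_{ki},v_{kj}$; this is (\ref{Gamma acting on T})--(\ref{Gamma acting on DT}). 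For (d): by (c) each $Tab(w)$ lies in a finite-dimensional $\Gamma$-stable subspace on which every $c_{rs}$ is scalar-plus-nilpotent, so $V(T(\bar v))=\bigoplus_{\sm\in\Sp\Ga}V(T(\bar v))(\sm)$; finite generation is obtained as for generic modules; and by inspection of (\ref{Gamma acting on T})--(\ref{Gamma acting on DT}) the Gelfand--Tsetlin multiplicities are at most $2$, equal to $2$ exactly on the singular locus, as expected for a $1$-singular module.

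The main obstacle is the inductive step of the transfer principle: $X^{\mathrm{gen}}$ has poles of order greater than one along the hyperplane $v_{ki}=v_{kj}$ --- one order for each adjacent generator acting on row $k$ --- so the single factor $(v_{ki}-v_{kj})$ does not clear all of them, and the identity above must be read in the sense that, after summing over the tableaux produced by the word and imposing the relations of $V(T(\bar v))$, the surviving singular part has exactly the shape on which the two prescriptions agree. A clean way to control this cancellation is to deform along the line $v=\bar v+t\delta$ with $\delta=\tfrac12(\delta^{ki}-\delta^{kj})$, so that $v_{ki}-v_{kj}$ becomes $t$ and $\mathcal{D}^{\bar v}$ becomes $\tfrac{d}{dt}\big|_{t=0}$: one views the generic module $V(T(\bar v+t\delta))$ over $\mathbb{C}((t))$ and shows that the $\mathbb{C}[[t]]$-lattice spanned by the $\tau$-symmetric combinations $T(\bar v+t\delta+w)+T(\bar v+t\delta+\tau(w))$ and the $\tau$-antisymmetric combinations $t^{-1}\bigl(T(\bar v+t\delta+w)-T(\bar v+t\delta+\tau(w))\bigr)$ is stable under $\mathfrak{gl}(n)$ and $\Gamma$; then $V(T(\bar v))$ is identified with the reduction of this lattice modulo $t$, pole cancellation is automatic, and the induced action is exactly (\ref{GT formulas for regular tableaux})--(\ref{Gamma acting on DT}). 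In either presentation, verifying these cancellations --- equivalently, the stability of that lattice --- is the technical heart; everything else is formal.
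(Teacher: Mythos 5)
The paper states this result without proof, attributing it to [FGR3, Theorems 4.11 and 4.12]; there is therefore no argument in the present paper to measure yours against, and the right comparison is with the original verification in [FGR3], which is a direct, case-by-case computation on regular and derivative tableaux. Your first route, the ``transfer principle'' proved by induction on word length, has exactly the obstruction you flag: $X^{\mathrm{gen}}$ acquires poles of order up to $\ell$ along $v_{ki}=v_{kj}$, a single factor $(v_{ki}-v_{kj})$ does not normalize them, and the inductive step does not close without a separate accounting of which singular parts survive after imposing the relations $T(\bar v+w)=T(\bar v+\tau(w))$ --- so as written that half of the proposal is not a proof but a restatement of the difficulty.

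Your second route is a genuinely different and cleaner route than [FGR3]'s. Working over $\mathbb{C}((t))$ with $v=\bar v+t\delta$, $\delta=\tfrac12(\delta^{ki}-\delta^{kj})$, and taking the $\mathbb{C}[[t]]$-lattice $\mathcal{L}$ spanned by $S_w=\tfrac12\bigl(T(\bar v+t\delta+w)+T(\bar v+t\delta+\tau(w))\bigr)$ and $A_w=\tfrac1{2t}\bigl(T(\bar v+t\delta+w)-T(\bar v+t\delta+\tau(w))\bigr)$, the iterated-pole problem disappears: once $\mathcal{L}$ is stable under each single generator $E_{r,r\pm1}$, it is stable under all of $U(\mathfrak{gl}(n))$ by composition, the $\mathfrak{gl}(n)$ relations hold in $\mathcal{L}$ because they hold in the ambient generic module $V(T(\bar v+t\delta))$ over $\mathbb{C}((t))$, and reduction modulo $t$ transports the structure. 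One can check directly that the induced action is the claimed one: e.g.\ writing $T(\bar v+t\delta+w)=S_w+tA_w$, $T(\bar v+t\delta+\tau(w))=S_w-tA_w$ and $g(t)=\gamma_{rs}(\bar v+t\delta+w)$, one gets $c_{rs}S_w=\tfrac{g(t)+g(-t)}2S_w+t\tfrac{g(t)-g(-t)}2A_w\equiv g(0)S_w$ and $c_{rs}A_w=\tfrac{g(t)-g(-t)}{2t}S_w+\tfrac{g(t)+g(-t)}2A_w\equiv g'(0)S_w+g(0)A_w$ modulo $t$, which are exactly (\ref{Gamma acting on T}) and (\ref{Gamma acting on DT}) since $\tfrac{d}{dt}\big|_{t=0}=\mathcal{D}^{\bar v}$; the analogous first-order expansion of the $E_{\ell m}$ coefficients recovers (\ref{GT formulas for regular tableaux})--(\ref{GT formulas for derivative tableaux}). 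So what [FGR3] buys with a direct check of every relation, you buy with a single lemma (single-generator stability of $\mathcal{L}$) plus formal transport of structure. The remaining content, which you correctly identify as the technical heart, is exactly that lemma: one must verify that the simple pole at $t=0$ in $E_{k\pm1,k}$ or $E_{k,k\pm1}$ applied to $T(\bar v+t\delta+w)$ with $w_{ki}=w_{kj}$ is carried by a $\tau$-antisymmetric pair of target tableaux, i.e.\ that the leading coefficients of $T(v\pm\delta^{ki})$ and $T(v\pm\delta^{kj})$ are exchanged with a sign under $v_{ki}\leftrightarrow v_{kj}$. That is a short, explicit computation from Definition \ref{definition of coefficients e_rs}, and it must be done; it is not earned for free by the lattice formalism.
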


For some of the generators of $\mathfrak{gl}(n)$ the action on $V(T(\bar{v}))$ coincide with the classical Gelfand-Tsetlin formulas. The following corollary gives some sufficient conditions in order to have this property. 

\begin{corollary}\label{elements of gl(n) acting by the classical GT formulas}
Let $z$ be any element of $\mathbb{Z}^{\frac{n(n-1)}{2}}$.
\begin{itemize}
\item[(i)] For any $\ell ,m$ such that $k<\min\{\ell, m\}$ or $\max\{\ell, m\}\leq k$ we have
$$
E_{\ell m} (T(\bar{v}+z))= \sum_{\sigma \in \Phi_{\ell m}} e_{\ell m} (\sigma (\bar{v}+z)) T(\bar{v}+z+\sigma(\varepsilon_{\ell m})).$$
\item[(ii)] The equality 
$$
E_{rs} (\mathcal{D}T(\bar{v}+z))= \sum_{\sigma \in \Phi_{rs}} e_{rs} (\sigma (\bar{v}+z)) \mathcal{D}T(\bar{v}+z+\sigma(\varepsilon_{rs})),$$
holds whenever $r,s$ satisfy:
$$
\begin{cases}
k\notin\{r,r+1,\ldots,s\}, & \text{if }\ \  r<s,\\
k\notin\{s,s+1,\ldots,r\}, & \text{if }\ \  s<r.
\end{cases}
$$ 
\end{itemize}
\end{corollary}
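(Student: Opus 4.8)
The plan is to substitute the permutation form of the Gelfand-Tsetlin formulas (Proposition~\ref{GT formulas in terms of permutations}) into the two defining identities (\ref{GT formulas for regular tableaux}) and (\ref{GT formulas for derivative tableaux}), and then to evaluate $\mathcal{D}^{\bar v}$ term by term, using its explicit action on a product ``rational function $\times$ tableau'' together with Lemma~\ref{Properties of D on functions}. Write $r$ for the Gelfand-Tsetlin row index attached to $(\ell,m)$ and put $\varepsilon=\varepsilon_{\ell m}$. In the regular case we use the expansion $(v_{ki}-v_{kj})E_{\ell m}(T(v+z))=\sum_{\sigma\in\Phi_{\ell m}}(v_{ki}-v_{kj})\,e_{\ell m}(\sigma(v+z))\,T(v+z+\sigma(\varepsilon))$, and in the derivative case $E_{\ell m}(T(v+z))=\sum_{\sigma\in\Phi_{\ell m}}e_{\ell m}(\sigma(v+z))\,T(v+z+\sigma(\varepsilon))$. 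Since $\mathcal{D}^{\bar v}$ sends a summand $f\,T(v+w)$ to $\mathcal{D}^{\bar v}(f)\,T(\bar v+w)+f(\bar v)\,\mathcal{D}T(\bar v+w)$, the whole computation reduces to controlling the two scalars $\mathcal{D}^{\bar v}(f)$ and $f(\bar v)$ for the relevant functions $f$.

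The core of the argument is the following claim. Under the hypothesis of (i), and for each $\sigma\in\Phi_{\ell m}$, the rational function $e_{\ell m}(\sigma(v+z))$ is regular at $\bar v$; under the hypothesis of (ii), it is moreover annihilated by $\mathcal{D}^{\bar v}$. Regularity holds because the only denominator that can occur in $e_{\ell m}$ --- and only when $(\ell,m)$ is off-diagonal --- is $\prod_{j\neq 1}^{r}(w_{r1}-w_{rj})$, which under the substitution $w=\sigma(\bar v+z)$ becomes a product of differences of entries of the single row $r$; the numerical hypothesis forces $r\neq k$ in the off-diagonal case, so these differences are non-integers (as $\bar v$ is $1$-singular), hence nonzero. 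For the vanishing of $\mathcal{D}^{\bar v}(e_{\ell m}(\sigma(v+z)))$ one runs through the three shapes $(\ell,m)=(r,r+1)$, $(r+1,r)$, $(r,r)$ and observes that the condition that $k$ does not lie in the index interval of $(\ell,m)$ forces every row index occurring in the formula for $e_{\ell m}$ to be different from $k$; thus $e_{\ell m}(\sigma(v+z))$ does not involve the variables $v_{ki},v_{kj}$ and $\mathcal{D}^{\bar v}$ kills it. The one delicate situation is the diagonal case $E_{rr}$ with $r\in\{k,k+1\}$, where $e_{rr}(v+z)$ is an affine function of the entries of row $k$ that is symmetric in $v_{ki}$ and $v_{kj}$; there Lemma~\ref{Properties of D on functions}(ii) gives $\mathcal{D}^{\bar v}(e_{rr}(v+z))=0$ anyway.

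Granting the claim, both parts follow at once. For (i): by (\ref{GT formulas for regular tableaux}) and the first expansion, $E_{\ell m}(T(\bar v+z))$ is obtained by applying $\mathcal{D}^{\bar v}$ to each summand $(v_{ki}-v_{kj})e_{\ell m}(\sigma(v+z))\,T(v+z+\sigma(\varepsilon))$; the $\mathcal{D}T$-part carries the coefficient $(v_{ki}-v_{kj})e_{\ell m}(\sigma(v+z))$ evaluated at $\bar v$, which is $0$ since $v_{ki}-v_{kj}$ vanishes on the hyperplane, while by Lemma~\ref{Properties of D on functions}(i) (which applies thanks to regularity) the $T$-part has coefficient $e_{\ell m}(\sigma(\bar v+z))$. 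Summing over $\sigma\in\Phi_{\ell m}$ gives the asserted formula. For (ii): by (\ref{GT formulas for derivative tableaux}) and the second expansion, each summand $e_{\ell m}(\sigma(v+z))\,T(v+z+\sigma(\varepsilon))$ contributes $\mathcal{D}^{\bar v}(e_{\ell m}(\sigma(v+z)))\,T(\bar v+z+\sigma(\varepsilon))+e_{\ell m}(\sigma(\bar v+z))\,\mathcal{D}T(\bar v+z+\sigma(\varepsilon))$; the first term vanishes by the claim, and summing over $\sigma$ produces the asserted formula.

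The bookkeeping with the linearity of $\mathcal{D}^{\bar v}$ and with the two parts of Lemma~\ref{Properties of D on functions} is routine. The step I expect to be the main obstacle is the combinatorial heart of the claim: making precise, for each of the shapes $(r,r+1)$, $(r+1,r)$, $(r,r)$, which rows enter the numerator and the denominator of $e_{\ell m}$; verifying that the transpositions in $\Phi_{\ell m}$ only permute entries of row $r$; and checking that the hypotheses on $k$, $\ell$, $m$ are exactly what is needed to keep row $k$ from appearing (non-symmetrically) in $e_{\ell m}$.
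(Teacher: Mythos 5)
Your overall strategy is exactly the paper's: expand the action via the permutation form of the Gelfand--Tsetlin formulas, apply $\mathcal{D}^{\bar v}$ summand by summand, and use Lemma~\ref{Properties of D on functions} -- part (i) of the lemma for Corollary~(i) (regularity of $e_{\ell m}(\sigma(v+z))$ makes $\mathcal{D}^{\bar v}\bigl((v_{ki}-v_{kj})e_{\ell m}\bigr)=e_{\ell m}(\bar v+z)$ and kills the $\mathcal{D}T$-part), and part (ii) of the lemma for Corollary~(ii) (symmetry in $v_{ki},v_{kj}$ makes $\mathcal{D}^{\bar v}(e_{\ell m})=0$ and kills the $T$-part). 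Your treatment of (i) matches the paper.

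For (ii) there is a genuine gap in your case analysis. You assert that for the off-diagonal shapes $(\ell,m)=(r,r+1)$ and $(r+1,r)$, the hypothesis $k\notin\{r,r+1\}$ (in the statement's index interval) ``forces every row index occurring in the formula for $e_{\ell m}$ to be different from $k$.'' This is false for the lowering generator $E_{k+2,k+1}$: taking $r=k+1$ in $e_{r+1,r}$, the hypothesis $k\notin\{k+1,k+2\}$ is trivially satisfied, yet the numerator is $\prod_{j=1}^{k}(w_{k+1,1}-w_{k,j})$, which does involve row $k$. You correctly isolate the analogous diagonal case $e_{k+1,k+1}$ and resolve it by symmetry, but you miss $e_{k+2,k+1}$. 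The repair is the same as for the diagonal case, and is in fact what the paper's one-line argument does across the board: whenever row $k$ enters $e_{\ell m}$ under the hypothesis of (ii), it enters only as a factor (or summand) running over the \emph{entire} row $k$ -- namely $\prod_{j=1}^{k}(w_{k+1,1}-w_{k,j})$ for $e_{k+2,k+1}$ and $\sum_{j=1}^{k}w_{k,j}$ for $e_{k+1,k+1}$ -- and since the permutations $\sigma\in\Phi_{\ell m}$ only permute row $r\neq k$, the substituted function $e_{\ell m}(\sigma(v+z))$ remains symmetric in $v_{ki},v_{kj}$; then Lemma~\ref{Properties of D on functions}(ii) applies. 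The paper does not carry out the row-by-row bookkeeping at all; it simply asserts this symmetry, which is what you should prove rather than the stronger (and false) ``row $k$ never occurs.''
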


\begin{proof} The proof will be based on the basic properties of the rational functions $e_{ij}(w)$ defined in \ref{definition of coefficients e_rs}. 
\begin{itemize} 
\item[(i)] If $k<\min\{\ell, m\}$ or $\max\{\ell, m\}-1<k$ then, $e_{\ell m}(\sigma(v+z))$ is a smooth function, for any $\sigma \in \Phi_{\ell m}$ so, by Lemma \ref{Properties of D on functions} we have $\mathcal{D}^{\bar{v}}((v_{ki}-v_{kj})e_{\ell m}(\sigma(v+z)))=e_{\ell m}(\sigma(\bar{v}+z))$ and $ev(\bar{v})((v_{ki}-v_{kj})e_{\ell m}(\sigma(v+z)))=0$. Now, by (\ref{GT formulas for regular tableaux}) we have the equality.
\item[(ii)] Under this conditions on $r,s$ the function $e_{rs}(\sigma(v+z))$ is symmetric with respect to $v_{ki}$ and $v_{kj}$ for any  $\sigma \in \Phi_{rs}$. Therefore, by Lemma \ref{Properties of D on functions}, $\mathcal{D}^{\bar{v}}e_{rs}(\sigma(v+z)))=0$ and then, by (\ref{GT formulas for derivative tableaux}) we have the desired equality.
\end{itemize}
\end{proof}

\begin{remark}\label{GT subspaces}
Note that $\chi\in\Supp_{GT}(V(T(\bar{v})))$ if and only if there exist $w\in\mathbb{Z}^{\frac{n(n-1)}{2}}$ such that $\chi(c_{rs})=\gamma_{rs}(\bar{v}+w)$ for any $1\leq s\leq r\leq n$. In this situation each Gelfand-Tsetlin subspace $V(T(\bar{v}))(\chi)$ is generated by $T(\bar{v}+w)$ and $\mathcal{D}T(\bar{v}+w)$. 
\end{remark}

\begin{remark}\label{remark all gamma acts same}
As the polynomials $\{\gamma_{rs}(v)\}_{1\leq s\leq r\leq n}$ are symmetric in the entries of $v$ and generated all symmetric polynomials, we have $\gamma_{rs}(v)=\gamma_{rs}(v')$  for any $r,s$ if only if, $v=\sigma(v')$ for some $\sigma \in G$.  In particular, for the $1$-singular vector $\bar{v}$ we have $\gamma_{rs}(\bar{v}+z)=\gamma_{rs}(\bar{v}+w)$ for any $1\leq s\leq r\leq n$ if, and only if,  $w=z$ or $w=\tau(z)$.
\end{remark}

\begin{lemma}[\cite{FGR3} Lemma 5.2]\label{lem-ck2}
Assume $\tau(z)\neq z$. Then:
\begin{itemize}
\item[(i)] $(c_{k2}-\gamma_{k2}(\bar{v}+z))T(\bar{v}+z)=0$.
\item[(ii)] $(c_{k2}-\gamma_{k2}(\bar{v}+z))\mathcal{D}T(\bar{v}+z)=\mathcal{D}^{\bar{v}}(\gamma_{k2}(v+z))T(\bar{v}+z)$ with $\mathcal{D}^{\bar{v}}(\gamma_{k2}(v+z))\neq 0$.
\item[(iii)] $(c_{k2}-\gamma_{k2}(\bar{v}+z))^{2}\mathcal{D}T(\bar{v}+z)=0.$
\end{itemize}
\end{lemma}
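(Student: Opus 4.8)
The plan is to compute directly with the explicit action of $c_{k2}$ on the basis vectors $T(\bar v+z)$ and $\mathcal{D}T(\bar v+z)$ given by formulas (\ref{Gamma acting on T}) and (\ref{Gamma acting on DT}), and then to analyze the scalar $\mathcal{D}^{\bar v}(\gamma_{k2}(v+z))$ using Lemma \ref{Properties of D on functions}. Part (i) is immediate from (\ref{Gamma acting on T}): the element $c_{k2}$ acts on $T(\bar v+z)$ by the scalar $\gamma_{k2}(\bar v+z)$, so $(c_{k2}-\gamma_{k2}(\bar v+z))$ annihilates it. Part (ii) is equally immediate from (\ref{Gamma acting on DT}): $c_{k2}\mathcal{D}T(\bar v+z)=\gamma_{k2}(\bar v+z)\mathcal{D}T(\bar v+z)+\mathcal{D}^{\bar v}(\gamma_{k2}(v+z))T(\bar v+z)$, whence $(c_{k2}-\gamma_{k2}(\bar v+z))\mathcal{D}T(\bar v+z)=\mathcal{D}^{\bar v}(\gamma_{k2}(v+z))T(\bar v+z)$. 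Part (iii) then follows by applying $(c_{k2}-\gamma_{k2}(\bar v+z))$ once more and invoking part (i): the right-hand side of (ii) is a scalar multiple of $T(\bar v+z)$, which is killed by $(c_{k2}-\gamma_{k2}(\bar v+z))$.

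So the only genuine content is the claim in (ii) that $\mathcal{D}^{\bar v}(\gamma_{k2}(v+z))\neq 0$ when $\tau(z)\neq z$. Here I would write out $\gamma_{k2}(\lambda)=\sum_{p=1}^{k}(\lambda_{kp}+k-1)^2\prod_{q\neq p}\bigl(1-\frac{1}{\lambda_{kp}-\lambda_{kq}}\bigr)$ from (\ref{def-gamma}), substitute $\lambda=v+z$, and apply the derivation $\mathcal{D}^{\bar v}=\frac12(\partial_{v_{ki}}-\partial_{v_{kj}})$ evaluated at $\bar v$. Since $\gamma_{k2}$ is a symmetric polynomial in the shifted row entries, Lemma \ref{Properties of D on functions}(ii) does \emph{not} directly apply because the shift $z$ breaks the symmetry between positions $i$ and $j$ unless $z_{ki}=z_{kj}$, i.e.\ unless $\tau(z)=z$; this is precisely where the hypothesis enters. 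The cleanest route is to note that $\gamma_{k2}$, as a symmetric function of its arguments, can be written in terms of power sums $p_1=\sum_p(\lambda_{kp}+k-1)$ and $p_2=\sum_p(\lambda_{kp}+k-1)^2$ plus lower-degree symmetric pieces coming from the $\prod(1-\frac{1}{\lambda_{kp}-\lambda_{kq}})$ factors; one checks that the leading term is $p_2$ together with corrections that are regular at the singular hyperplane. Applying $\mathcal{D}^{\bar v}$ to $p_2(v+z)$ gives $(\bar v_{ki}+z_{ki}+k-1)-(\bar v_{kj}+z_{kj}+k-1)=z_{ki}-z_{kj}$ since $\bar v_{ki}=\bar v_{kj}$, which is nonzero exactly when $\tau(z)\neq z$. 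The remaining task is to verify that the contributions of the correction terms cannot cancel this; because those terms have strictly smaller degree in $\lambda_{k\bullet}$ or vanish under $\mathcal{D}^{\bar v}$ by symmetry arguments analogous to Lemma \ref{Properties of D on functions}(ii), no cancellation occurs.

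The main obstacle is this non-vanishing verification: one must control the full expression $\mathcal{D}^{\bar v}(\gamma_{k2}(v+z))$ rather than just its dominant power-sum term. An alternative, cleaner approach avoiding degree bookkeeping is to use the identification of $\Gamma$ with the algebra of $G$-invariant polynomials in $\Lambda$ (stated after (\ref{def-gamma})), together with Remark \ref{remark all gamma acts same}: the functions $\gamma_{k1},\dots,\gamma_{kk}$ separate the $S_k$-orbits on the $k$-th row, so the map $z\mapsto(\gamma_{k1}(\bar v+z),\dots,\gamma_{kk}(\bar v+z))$ has the same value at $z$ and at $\tau(z)$ and at no other integral points, and its differential at $\bar v+z$ (in the direction $\delta^{ki}-\delta^{kj}$ tangent to the singular locus) must be nonzero whenever $z\neq\tau(z)$, since otherwise the derivative tableau construction would collapse. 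In practice I expect the paper to cite this directly from \cite{FGR3}, Lemma 5.2, so I would present parts (i)--(iii) as the short formal deductions above and isolate the non-vanishing of $\mathcal{D}^{\bar v}(\gamma_{k2}(v+z))$ as the one computational point, handled by the power-sum expansion sketched here.
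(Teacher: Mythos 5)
This lemma is cited verbatim from \cite{FGR3} (Lemma 5.2) and the present paper supplies no proof of its own, so there is no in-paper argument to compare against; what follows is an assessment of your proposal on its own terms. Your formal deductions are correct: parts (i) and (ii) are exactly the specialisations of \eqref{Gamma acting on T} and \eqref{Gamma acting on DT} to $(r,s)=(k,2)$, and (iii) follows by applying $(c_{k2}-\gamma_{k2}(\bar v+z))$ to (ii) and invoking (i). You have also correctly isolated the one nontrivial step, the non-vanishing of $\mathcal{D}^{\bar v}(\gamma_{k2}(v+z))$, and your power-sum route does close cleanly if you sharpen it slightly. Write $x_p=\lambda_{kp}+k-1$; then from \eqref{def-gamma},
\[
\gamma_{km}(\lambda)=\sum_{p=1}^{k} x_p^{\,m}\prod_{q\neq p}\Bigl(1-\frac{1}{x_p-x_q}\Bigr),
\]
which is a symmetric polynomial in $x_1,\dots,x_k$ of total degree $m$. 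Its top (degree-$m$) homogeneous component is exactly $p_m=\sum_p x_p^{\,m}$: rescaling $x_p\mapsto Rx_p$ and letting $R\to\infty$ sends each factor $1-\tfrac{1}{x_p-x_q}$ to $1$, so the leading term is $R^m p_m$. Hence $\gamma_{k2}=p_2+(\text{symmetric of degree}\le1)$, and the degree $\le1$ remainder lies in $\Span\{1,p_1\}$. Since $\mathcal{D}^{\bar v}$ annihilates constants and $\mathcal{D}^{\bar v}(p_1(v+z))=\tfrac12(1-1)=0$, you get
$\mathcal{D}^{\bar v}(\gamma_{k2}(v+z))=\mathcal{D}^{\bar v}(p_2(v+z))=(\bar v_{ki}+z_{ki})-(\bar v_{kj}+z_{kj})=z_{ki}-z_{kj}\neq0$
precisely when $\tau(z)\neq z$. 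This removes the cancellation worry you flagged: you do not need to control products like $p_1p_2$ because they simply do not occur in degree $\le2$. By contrast, your alternative ``Jacobian'' argument is too coarse as stated: knowing that the differential of $\lambda\mapsto(\gamma_{k1},\dots,\gamma_{kk})$ has full rank only guarantees that \emph{some} $\gamma_{ks}$ has nonzero derivative in the direction $\delta^{ki}-\delta^{kj}$, not the specific one $\gamma_{k2}$ required by the lemma (and, a small point, that direction is transverse to the singular hyperplane $v_{ki}=v_{kj}$, not tangent to it). Stick with the power-sum computation; completed as above, it is a full proof.
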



\section{$\Gamma$ separates tableaux in $V(T(\bar{v}))$}

One essential property of generic Gelfan-Tsetlin modules  described in Theorem \ref{Generic Gelfand-Tsetlin modules} is that for any two different tableaux in $V(T(\bar{v}))$ there exists an element $\gamma$ of $\Gamma$
 that separates those tableaux (i.e. the action of $\gamma$
 has different eigenvalues on this two tableaux). In this section we will give a detailed prove of this fact for any $1$-singular module $V(T(\bar{v}))$. Remember that we fix a basis $\{Tab(z)\ |\ z\in\mathbb{Z}^{\frac{n(n-1)}{2}}\}$ for $V(T(\bar{v}))$.\\
For any $1\leq s\leq r\leq n$ and $z\in \mathbb{Z}^{\frac{n(n-1)}{2}}$ we will denote by $C_{rs}(z)$ the element $c_{rs}-\gamma_{rs}(\bar{v}+z)$ of $\Gamma$.
\begin{lemma}\label{C_rs acts on Gelfand-Tsetlin subspaces}
For any $1\leq s\leq r\leq n$, any $z\in\mathbb{Z}^{\frac{n(n-1)}{2}}$  and $\chi\in Supp_{GT}(V(T(\bar{v})))$, the subspace $V(T(\bar{v}))(\chi)$ is $C_{rs}(z)$-invariant.
\end{lemma}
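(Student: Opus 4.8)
The plan is to show that $C_{rs}(z)$ maps each Gelfand-Tsetlin subspace $V(T(\bar v))(\chi)$ into itself by using the explicit formulas \eqref{Gamma acting on T} and \eqref{Gamma acting on DT} for the action of the generators $c_{rs}$ of $\Gamma$ on the basis tableaux, together with Remark \ref{GT subspaces}, which tells us that each $V(T(\bar v))(\chi)$ is at most two-dimensional, spanned by $T(\bar v + w)$ and $\mathcal{D}T(\bar v + w)$ for a suitable $w \in \mathbb{Z}^{\frac{n(n-1)}{2}}$.

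First I would fix $\chi \in \Supp_{GT}(V(T(\bar v)))$ and pick $w$ as in Remark \ref{GT subspaces}. There are two cases according to whether $\tau(w) = w$ or $\tau(w) \neq w$. If $\tau(w) \neq w$, then $V(T(\bar v))(\chi)$ has basis $\{T(\bar v + w), \mathcal{D}T(\bar v + w)\}$. Applying $C_{rs}(z) = c_{rs} - \gamma_{rs}(\bar v + z)$ to $T(\bar v + w)$ and using \eqref{Gamma acting on T} gives
$$
C_{rs}(z)(T(\bar v + w)) = \bigl(\gamma_{rs}(\bar v + w) - \gamma_{rs}(\bar v + z)\bigr) T(\bar v + w),
$$
which lies in $V(T(\bar v))(\chi)$. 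Applying $C_{rs}(z)$ to $\mathcal{D}T(\bar v + w)$ and using \eqref{Gamma acting on DT} gives
$$
C_{rs}(z)(\mathcal{D}T(\bar v + w)) = \bigl(\gamma_{rs}(\bar v + w) - \gamma_{rs}(\bar v + z)\bigr)\mathcal{D}T(\bar v + w) + \mathcal{D}^{\bar v}(\gamma_{rs}(v + w)) T(\bar v + w),
$$
and since both $T(\bar v + w)$ and $\mathcal{D}T(\bar v + w)$ belong to $V(T(\bar v))(\chi)$, so does the right-hand side. In the degenerate case $\tau(w) = w$ one has $w_{ki} - w_{kj} = 0$, the derivative tableau $\mathcal{D}T(\bar v + w)$ is (up to sign) its own negative hence vanishes in $V(T(\bar v))$, so $V(T(\bar v))(\chi)$ is spanned by $T(\bar v + w)$ alone, and the first computation already shows invariance. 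Finally, an arbitrary element of $V(T(\bar v))(\chi)$ is a linear combination of these spanning tableaux, and $C_{rs}(z)$ is linear, so the whole subspace is $C_{rs}(z)$-invariant.

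The only point requiring a little care — and the closest thing to an obstacle — is making sure the formulas \eqref{Gamma acting on T} and \eqref{Gamma acting on DT} are applied to the correct representative: the basis $\{Tab(z)\}$ is defined via the sign condition on $w_{ki} - w_{kj}$, and one must check consistency with the relations $T(\bar v + w) = T(\bar v + \tau(w))$ and $\mathcal{D}T(\bar v + w) = -\mathcal{D}T(\bar v + \tau(w))$; since $\gamma_{rs}$ is symmetric under $G$ (Remark \ref{remark all gamma acts same}) we have $\gamma_{rs}(\bar v + w) = \gamma_{rs}(\bar v + \tau(w))$, and $\mathcal{D}^{\bar v}(\gamma_{rs}(v+w))$ changes sign under $w \mapsto \tau(w)$ exactly as $\mathcal{D}T(\bar v + w)$ does, so the formulas are well-defined on $V(T(\bar v))$ and no ambiguity arises. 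Everything else is immediate linearity, so this lemma is essentially a direct bookkeeping consequence of the structure theorem for $V(T(\bar v))$.
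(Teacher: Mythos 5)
Your proof is correct and follows essentially the same route as the paper: quote Remark \ref{GT subspaces} to identify the two spanning tableaux, then apply the explicit formulas \eqref{Gamma acting on T} and \eqref{Gamma acting on DT} to see that $C_{rs}(z)$ preserves the span. The extra remarks about the degenerate case $\tau(w)=w$ and the well-definedness under $w\mapsto\tau(w)$ are harmless additional care that the paper leaves implicit.
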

\begin{proof}
By Remark \ref{GT subspaces} any Gelfand-Tsetlin subspace $V(T(\bar{v}))(\chi)$ is generated by $T(\bar{v}+w)$ and $\mathcal{D}T(\bar{v}+w)$ for some $w\in\mathbb{Z}^{\frac{n(n-1)}{2}}$. Now,
\begin{align*}
C_{rs}(z)T(\bar{v}+w)=&(\gamma_{rs}(\bar{v}+w)-\gamma_{rs}(\bar{v}+z))T(\bar{v}+w)\\
C_{rs}(z)\mathcal{D}T(\bar{v}+w)=&\mathcal{D}^{\bar{v}}(\gamma_{rs}(\bar{v}+z))T(\bar{v}+w)+(\gamma_{rs}(\bar{v}+w)-\gamma_{rs}(\bar{v}+z))\mathcal{D}T(\bar{v}+w).
\end{align*}
\end{proof}
\begin{definition}
Given $z,w \in \mathbb{Z}^{\frac{n(n-1)}{2}}$, we write $Tab(z) \prec Tab(w)$ if, and only if, there exists $u \in U(\mathfrak{gl}(n))$ such that $Tab(w)$ appears with nonzero coefficient in the decomposition of  $u\cdot Tab(z)$ as a linear combination of tableaux. 
\end{definition}

\begin{lemma}\label{Derivative generates normal tableau}
Let $w \in \mathbb{Z}^{\frac{n(n+1)}{2}}$ be such that $w \neq \tau(w)$, then $\mathcal{D}T(\bar{v}+w)\prec T(\bar{v}+w)$.
\end{lemma}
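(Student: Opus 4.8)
The plan is to exhibit one explicit element $u \in U(\gl(n))$ whose action on $\mathcal{D}T(\bar{v}+w)$ produces $T(\bar{v}+w)$ with nonzero coefficient. The key observation is that the Gelfand-Tsetlin subalgebra $\Gamma$ lies inside $U = U(\gl(n))$ — each generator $c_{rs}$ is the genuine element of $U(\gl_r) \subseteq U$ given by (\ref{equ_3}) — so we are allowed to choose $u$ inside $\Gamma$. The natural candidate is the operator already studied in Lemma \ref{lem-ck2}, namely
$$u \ = \ C_{k2}(w) \ = \ c_{k2} - \gamma_{k2}(\bar{v}+w).$$

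Next I would simply invoke Lemma \ref{lem-ck2}(ii) with $z = w$: this is legitimate because the hypothesis $w \neq \tau(w)$ of the present lemma is exactly the hypothesis $\tau(z) \neq z$ of Lemma \ref{lem-ck2}. It gives
$$u \cdot \mathcal{D}T(\bar{v}+w) \ = \ C_{k2}(w)\,\mathcal{D}T(\bar{v}+w) \ = \ \mathcal{D}^{\bar{v}}\big(\gamma_{k2}(v+w)\big)\, T(\bar{v}+w),$$
and the scalar $\mathcal{D}^{\bar{v}}(\gamma_{k2}(v+w))$ is nonzero by the same statement. Since $T(\bar{v}+w) = T(\bar{v}+\tau(w)) = Tab(\tau(w))$ is a basis element of $V(T(\bar{v}))$, we conclude that $T(\bar{v}+w)$ appears with nonzero coefficient in the expansion of $u \cdot \mathcal{D}T(\bar{v}+w)$ as a linear combination of basis tableaux, which is precisely the assertion $\mathcal{D}T(\bar{v}+w) \prec T(\bar{v}+w)$.

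There is essentially no obstacle here beyond bookkeeping, since all the substance was packaged into Lemma \ref{lem-ck2}(ii); the only points deserving a word of care are that $u$ must belong to $U(\gl(n))$ for the definition of $\prec$ to apply (true, as $\Gamma \subseteq U$) and that the relevant coefficient does not vanish (part of Lemma \ref{lem-ck2}(ii)). Alternatively, one could argue directly from (\ref{Gamma acting on DT}) using any generator $c_{rs}$ with $\mathcal{D}^{\bar{v}}(\gamma_{rs}(v+w)) \neq 0$; the existence of such an $rs$ when $w \neq \tau(w)$ can be extracted from Remark \ref{remark all gamma acts same}, but routing through Lemma \ref{lem-ck2} is cleanest because it both names the generator and certifies the nonvanishing.
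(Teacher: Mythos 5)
Your proof is correct and follows essentially the same route as the paper: both invoke Lemma~\ref{lem-ck2}(ii) with $z=w$ to produce $C_{k2}(w)\,\mathcal{D}T(\bar{v}+w)=\mathcal{D}^{\bar{v}}(\gamma_{k2}(v+w))\,T(\bar{v}+w)$ with nonzero scalar, and conclude. The only addition you make is the explicit (and sensible) remark that $C_{k2}(w)\in\Gamma\subseteq U(\gl(n))$, which the paper leaves implicit.
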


\begin{proof}
By Lemma \ref{lem-ck2}(ii) we have, $C_{k2}(w)\cD T(\bar{v}+w)=\cD^{\bar{v}}(\gamma_{k2}(v+w))T(\bar{v}+w)$. As $w \neq \tau(w)$, $\cD^{\bar{v}}(\gamma_{k2}(v+w))\neq 0$ and then, $\cD T(\bar{v}+w) \prec T(\bar{v}+w)$.
\end{proof}

\begin{lemma}\label{separation different characters}
Let $z,w \in \mathbb{Z}^{\frac{n(n-1)}{2}}$ be such that $w \neq z$ and $w \neq \tau(z)$. There exists $\gamma_{z}^{w} \in \Gamma$ such that  
$$\gamma_{z}^{w}\cdot T(\bar{v}+z)=\gamma_{z}^{w}\cdot \cD T(\bar{v}+z)=0  \   \mbox{and}    \  \gamma_{z}^{w}\cdot Tab(w)=Tab(w)$$
\end{lemma}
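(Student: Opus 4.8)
The plan is to produce $\gamma_z^w$ as a cubic polynomial in a single, carefully chosen generator of $\Gamma$. Since $w\neq z$ and $w\neq\tau(z)$, Remark~\ref{remark all gamma acts same} provides a pair of indices $(r_0,s_0)$ with
\[
\lambda:=\gamma_{r_0s_0}(\bar v+w)-\gamma_{r_0s_0}(\bar v+z)\neq 0 .
\]
Set $C:=C_{r_0s_0}(z)=c_{r_0s_0}-\gamma_{r_0s_0}(\bar v+z)\in\Gamma$. Because $\Gamma$ is commutative it preserves every generalized eigenspace $V(T(\bar v))(\chi)$, and the same remark shows that the characters $\chi_z,\chi_w$ of the subspaces generated by $T(\bar v+z)$ and $T(\bar v+w)$ are distinct; hence it suffices to control the action of $C$ on each of the two finite-dimensional subspaces $V(T(\bar v))(\chi_z)$ and $V(T(\bar v))(\chi_w)$ separately (cf. Remark~\ref{GT subspaces}).

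Next I would record these two local actions from the $\Gamma$-formulas (\ref{Gamma acting on T})--(\ref{Gamma acting on DT}), equivalently from Lemma~\ref{C_rs acts on Gelfand-Tsetlin subspaces}. On $V(T(\bar v))(\chi_z)$ one has $C\cdot T(\bar v+z)=0$ and $C\cdot\mathcal{D}T(\bar v+z)$ is a scalar multiple of $T(\bar v+z)$, so that $C^{2}$ annihilates $V(T(\bar v))(\chi_z)$ (a fact already visible for $(r_0,s_0)=(k,2)$ in Lemma~\ref{lem-ck2}(iii)). On $V(T(\bar v))(\chi_w)$, in the ordered basis $(T(\bar v+w),\mathcal{D}T(\bar v+w))$ the operator $C$ is upper triangular with both diagonal entries equal to $\lambda$; consequently $N:=C-\lambda\,\mathrm{Id}$ satisfies $N^{2}=0$ on this subspace. (If $w=\tau(w)$ the subspace is one-dimensional and $C$ acts by the scalar $\lambda$, so $N=0$.)

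Now choose the Hermite interpolation polynomial
\[
P(t):=\lambda^{-3}\,t^{2}\,(3\lambda-2t),
\]
so that $P(0)=P'(0)=0$, $P(\lambda)=1$ and $P'(\lambda)=0$, and put $\gamma_z^w:=P(C)\in\Gamma$. Since $t^{2}\mid P(t)$ and $C^{2}=0$ on $V(T(\bar v))(\chi_z)$, we get $\gamma_z^w\cdot T(\bar v+z)=\gamma_z^w\cdot\mathcal{D}T(\bar v+z)=0$. On $V(T(\bar v))(\chi_w)$, expanding $P$ about $\lambda$ and using $N^{2}=0$ gives
\[
\gamma_z^w=P(\lambda+N)=P(\lambda)\,\mathrm{Id}+P'(\lambda)\,N=\mathrm{Id},
\]
hence $\gamma_z^w\cdot Tab(w)=Tab(w)$ regardless of whether $Tab(w)=T(\bar v+w)$ or $Tab(w)=\mathcal{D}T(\bar v+w)$, which is the claim.

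The one genuinely delicate point, which dictates the choice of $P$, is that $P$ must vanish to order two at $0$ — to kill the whole length-two Jordan block of $C$ over $\chi_z$ rather than just the vector $T(\bar v+z)$ — while at the same time satisfying $P'(\lambda)=0$, so that $\gamma_z^w$ acts as the identity, and not merely by a scalar, on the possibly non-semisimple block over $\chi_w$; this last requirement is exactly what is needed when $Tab(w)$ is a derivative tableau. These four linear conditions on the coefficients of $P$ are consistent and a cubic suffices; everything else is routine manipulation using that $\Gamma$ is a polynomial algebra in the $c_{rs}$ and that it preserves the Gelfand-Tsetlin subspaces.
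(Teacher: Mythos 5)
Your proof is correct, and it is cleaner and more uniform than the paper's. Both proofs start from the same point: by Remark~\ref{remark all gamma acts same} one can pick $(r_0,s_0)$ with $a=\lambda=\gamma_{r_0s_0}(\bar v+w)-\gamma_{r_0s_0}(\bar v+z)\neq 0$, and both work with the operator $C=C_{r_0s_0}(z)$. The paper then proceeds by a case analysis: when $Tab(w)=T(\bar v+w)$, or when $Tab(w)=\mathcal{D}T(\bar v+w)$ with $\mathcal{D}^{\bar v}(\gamma_{r_0s_0}(v+w))=0$, it uses $\gamma_z^w=\frac{1}{a^2}C^2$; in the remaining case where the off-diagonal entry $\mathcal{D}^{\bar v}(\gamma_{r_0s_0}(v+w))$ is nonzero, $C^2$ alone is not enough, and the paper brings in a second separating element $C_{k2}(w)$ (relying on Lemma~\ref{lem-ck2}(ii) to know its off-diagonal coefficient is nonzero) and forms the correction $\frac{1}{a^2}\bigl(1-\frac{2C_{k2}(w)}{a\mathcal{D}^{\bar v}(\gamma_{k2}(v+w))}\bigr)C^2$. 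You bypass the case split entirely by noting the structural fact that both $C|_{V(T(\bar v))(\chi_z)}$ and $(C-\lambda)|_{V(T(\bar v))(\chi_w)}$ are square-zero, and then taking the Hermite interpolation cubic $P$ with $P(0)=P'(0)=0$, $P(\lambda)=1$, $P'(\lambda)=0$, so that $\gamma_z^w:=P(C)$ annihilates the first block and is the identity on the second. This buys you a single formula in a single generator with no sub-cases, and it makes transparent exactly which analytic input is needed (namely, nilpotency of degree $\le 2$ on each Gelfand--Tsetlin block, which is Lemma~\ref{lem-ck2}(iii) in spirit); the price is that the explicit element of $\Gamma$ is a cubic rather than a quadratic, but that is immaterial.
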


\begin{proof}
Let us fix $r$ and $s$ such that $\gamma_{rs}(\bar{v}+w)\neq \gamma_{rs}(\bar{v}+z)$ (such $r$, $s$ exist because of Remark \ref{remark all gamma acts same}). Set $a:=\gamma_{rs}(\bar{v}+w)-\gamma_{rs}(\bar{v}+z)\neq 0$, by a direct computation we have the following identities:
\begin{itemize}
\item[(i)] $C_{rs}(z)T(\bar{v}+z)=0=C_{rs}^{2}(z)\cD T(\bar{v}+z$.
\item[(ii)] $C_{rs}(z)\cD T(\bar{v}+z)=\cD^{\bar{v}}(\gamma_{rs}(v+z))T(\bar{v}+z)$. 
\item[(iii)] $C_{rs}(z)T(\bar{v}+w)=aT(\bar{v}+w)$
\item[(iv)] $C_{rs}(z)\cD T(\bar{v}+w)=\cD^{\bar{v}}(\gamma_{rs}(v+w))T(\bar{v}+w)+a\cD T(\bar{v}+w)$.
\item[(v)] $C_{rs}^{2}(z)T(\bar{v}+w)=a^2 T(\bar{v}+w)$
\item[(vi)]$C_{rs}^2(z)\cD T(\bar{v}+w)
=2a\cD^{\bar{v}}(\gamma_{rs}(v+w))T(\bar{v}+w)+a^{2}\cD T(\bar{v}+w)$.
\end{itemize}
Now, we have two cases:

\noindent
{\it Case 1.} Suppose $Tab(w)=T(\bar{v}+w)$. In this case we take $\gamma_{z}^{w}=\frac{1}{a^{2}}C_{rs}^{2}(z)$.

\noindent
{\it Case 2.} Suppose that $Tab(w)=\mathcal{D}T(\bar{v}+w)$.
 In this case we have two possibilities, namely:

\begin{itemize}
\item[(i)] $\cD^{\bar{v}}(\gamma_{rs}(v+w))=0$. In this case, from $(v)$ we have $C_{rs}^{2}(z)\cdot \mathcal{D}T(\bar{v}+w)=a^2\mathcal{D}T(\bar{v}+w)$. So, we can take $\gamma_{z}^{w}=\frac{1}{a^{2}}C_{rs}^{2}(z)$.
\item[(ii)] $\cD^{\bar{v}}(\gamma_{rs}(v+w))\neq 0$. By Lemma \ref{lem-ck2}(ii), $\cD^{\bar{v}}(\gamma_{k2}(v+w))\neq 0$ and 
$$\begin{cases}
C_{k2}(w)T(\bar{v}+w)&=\ 0,\\
C_{k2}(w)\cD T(\bar{v}+w)&=\ \cD^{\bar{v}}(\gamma_{k2}(v+w))T(\bar{v}+w).
\end{cases}$$
So, applying $C_{k2}(w)$ to the equality $(vi)$, we have:
\begin{equation}\label{eq with c_k2}
C_{k2}(w)C_{rs}^{2}(z)\cD T(\bar{v}+w)=a^{2}\cD^{\bar{v}}(\gamma_{k2}(v+w))T(\bar{v}+w)
\end{equation}
Now, replacing Equality (\ref{eq with c_k2}) in $(vi)$, we have: 
\begin{equation}\label{eq with c_rs and c_k2}
\left(1-\frac{2C_{k2}(w)}{a\cD^{\bar{v}}(\gamma_{k2}(v+w))}\right)C_{rs}^{2}(z)\cD T(\bar{v}+w)=a^{2}\cD T(\bar{v}+w).
\end{equation}

Therefore, in this case we can consider $\gamma_{z}^{w}=\frac{1}{a^2}\left(1-\frac{2C_{k2}(w)}{a\cD^{\bar{v}}(\gamma_{k2}(v+w))}\right)C_{rs}^{2}(z)$.
\end{itemize}
Summarizing, we have:

$$
\gamma_{z}^{w}=\begin{cases}
\frac{1}{a^{2}}C_{rs}^{2}(z),& \text{ if }\ \  Tab(w)=T(\bar{v}+w)\ \text{ or }\\&\ \ \ \ \ \ \cD^{\bar{v}}(\gamma_{rs}(v+w))=0\\
\frac{1}{a^2}\left(1-\frac{2C_{k2}(w)}{a\cD^{\bar{v}}(\gamma_{k2}(v+w))}\right)C_{rs}^{2}(z),& \text{ if } \cD^{\bar{v}}(\gamma_{rs}(v+w))\neq 0
 \end{cases} 
 $$
\end{proof}
 \begin{remark}\label{GT subspaces are gamma_zw-invariant} 
Note that each $\gamma_{z}^{w}$ on the previous lemma is a combination of pro-\\ducts of elements of $\Gamma$ of the form $C_{\ell m}(w')$. So, by Lemma \ref{C_rs acts on Gelfand-Tsetlin subspaces}, the Gelfand-Tsetlin subspaces $V(T(\bar{v}))(\chi)$ are $\gamma_{z}^{w}$-invariant.
\end{remark}

\begin{lemma}\label{c_k2 separates}
Let us consider $T:=aT(\bar{v}+w)+ b\cD T(\bar{v}+\tau(w))$, where $w \in \mathbb{Z}^{\frac{n(n-1)}{2}}$ is such that $w\neq \tau(w)$ and $a,b\in\mathbb{C}$.  Then,
\begin{itemize}
\item[(i)] If $a \neq 0$, then $\gamma_1 \cdot T=T(\bar{v}+w)$ for some $\gamma_{1}\in\Gamma$.
\item[(ii)] If $b \neq 0$, then $\gamma_2 \cdot T=\cD T(\bar{v}+\tau(w))$ for some $\gamma_{2}\in\Gamma$.
\end{itemize}
\end{lemma}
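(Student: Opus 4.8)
The idea is to exploit the eigenvalue structure of the operators $C_{rs}(z) = c_{rs} - \gamma_{rs}(\bar v + z)$ established in the computations of Lemma~\ref{separation different characters}, together with the special role of $c_{k2}$ from Lemma~\ref{lem-ck2}, in order to separate $T(\bar v + w)$ from $\mathcal{D}T(\bar v + \tau(w))$ inside the single two-dimensional Gelfand-Tsetlin subspace $V(T(\bar v))(\chi)$ they span. Note that $T(\bar v + w) = T(\bar v + \tau(w))$ by the defining relations, so $T$ lives in the subspace spanned by $T(\bar v+\tau(w))$ and $\mathcal{D}T(\bar v+\tau(w))$, which by Remark~\ref{GT subspaces} is exactly one Gelfand-Tsetlin subspace $V(T(\bar v))(\chi)$; by Lemma~\ref{C_rs acts on Gelfand-Tsetlin subspaces} this subspace is invariant under every $C_{rs}(z')$, so all operators we build preserve it and we never leave this two-dimensional world.

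\textbf{Step 1: identify the action of $C_{k2}(\tau(w))$ on the subspace.} Since $w \neq \tau(w)$, Lemma~\ref{lem-ck2} (applied with $z = \tau(w)$, which also satisfies $\tau(\tau(w)) = w \neq \tau(w)$) gives $C_{k2}(\tau(w))\,T(\bar v+\tau(w)) = 0$ and $C_{k2}(\tau(w))\,\mathcal{D}T(\bar v+\tau(w)) = \mathcal{D}^{\bar v}(\gamma_{k2}(v+\tau(w)))\,T(\bar v+\tau(w))$ with $\mathcal{D}^{\bar v}(\gamma_{k2}(v+\tau(w))) \neq 0$; moreover $C_{k2}(\tau(w))^2$ annihilates the whole subspace. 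Write $d := \mathcal{D}^{\bar v}(\gamma_{k2}(v+\tau(w))) \neq 0$. Thus in the ordered basis $(T(\bar v+\tau(w)),\ \mathcal{D}T(\bar v+\tau(w)))$ the operator $C_{k2}(\tau(w))$ is the nilpotent matrix $\left(\begin{smallmatrix} 0 & d \\ 0 & 0 \end{smallmatrix}\right)$.

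\textbf{Step 2: prove (ii).} Apply $C_{k2}(\tau(w))$ to $T = a\,T(\bar v+\tau(w)) + b\,\mathcal{D}T(\bar v+\tau(w))$: the first term dies and we get $C_{k2}(\tau(w))\cdot T = b\,d\,T(\bar v+\tau(w))$. Hence $T(\bar v+\tau(w)) = \frac{1}{bd}\,C_{k2}(\tau(w))\cdot T$ whenever $b \neq 0$. Now I need to recover $\mathcal{D}T(\bar v+\tau(w))$ itself, not just the regular tableau. Subtract: $T - \frac{a}{bd}C_{k2}(\tau(w))\cdot T = b\,\mathcal{D}T(\bar v+\tau(w))$, so $\mathcal{D}T(\bar v+\tau(w)) = \frac{1}{b}\bigl(1 - \frac{a}{bd}C_{k2}(\tau(w))\bigr)\cdot T$; take $\gamma_2 := \frac{1}{b}\bigl(1 - \frac{a}{bd}C_{k2}(\tau(w))\bigr) \in \Gamma$. (If $a = 0$ this just reduces to $\gamma_2 = \frac{1}{b}\cdot 1$.)

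\textbf{Step 3: prove (i).} This is the part requiring more care, since $C_{k2}(\tau(w))$ alone cannot produce $T(\bar v+w) = T(\bar v+\tau(w))$ from a $T$ with $b$ possibly nonzero unless we can also kill the derivative component; the trouble is that $C_{k2}(\tau(w))$ sends $\mathcal{D}T$ to a multiple of $T$ rather than annihilating it. The fix is to separate $\chi = \chi(w)$ from neighbouring characters first, but here both tableaux share the same character, so instead I use a rescaling trick: note $\bigl(\tfrac{1}{d}C_{k2}(\tau(w))\bigr)^2 = 0$ and $C_{k2}(\tau(w))\cdot T(\bar v+\tau(w)) = 0$, so the operator $P := \mathrm{Id} - \tfrac{1}{d}C_{k2}(\tau(w))\cdot(\text{something projecting onto }\mathcal{D}T)$ is awkward to realize directly in $\Gamma$. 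A cleaner route: consider that $T(\bar v + \tau(w))$ is already recovered in Step 2's computation as $\frac{1}{d}C_{k2}(\tau(w))\cdot \mathcal{D}T(\bar v+\tau(w))$, and more to the point, $C_{k2}(\tau(w))\cdot T = b\,d\,T(\bar v+\tau(w))$ shows that when $b \neq 0$ we already have $T(\bar v + w) = \frac{1}{bd}C_{k2}(\tau(w))\cdot T$. When $b = 0$ but $a \neq 0$, $T = a\,T(\bar v+w)$ and we take $\gamma_1 = \frac{1}{a}\cdot 1$. So set $\gamma_1 := \frac{1}{bd}C_{k2}(\tau(w))$ if $b \neq 0$, and $\gamma_1 := \frac{1}{a}\,\mathrm{Id}$ if $b = 0$; either way $\gamma_1 \in \Gamma$ and $\gamma_1 \cdot T = T(\bar v + w)$, using $a \neq 0$ in the second case.

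\textbf{Main obstacle.} The only subtlety is bookkeeping: making sure that in case (i) with $b \neq 0$ the resulting element really lands in $\Gamma$ (it does, being a scalar multiple of $C_{k2}(\tau(w)) = c_{k2} - \gamma_{k2}(\bar v + \tau(w)) \in \Gamma$) and that the formula is independent of the choice of basis representative ($T(\bar v+w)$ vs.\ $T(\bar v+\tau(w))$ — these are literally equal). By Remark~\ref{GT subspaces are gamma_zw-invariant}-type reasoning all operators used are built from the $C_{\ell m}$'s and preserve the relevant Gelfand-Tsetlin subspace, so no stray components appear. I expect the write-up to be short: both parts follow from the single nilpotent-Jordan-block picture of $C_{k2}(\tau(w))$ on the two-dimensional space $V(T(\bar v))(\chi)$, established via Lemma~\ref{lem-ck2}.
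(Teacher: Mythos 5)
Your proof is correct and takes essentially the same route as the paper: both arguments reduce to the observation (via Lemma~\ref{lem-ck2}) that $C_{k2}$ acts on the two-dimensional Gelfand-Tsetlin subspace spanned by $T(\bar v+\tau(w))$ and $\mathcal{D}T(\bar v+\tau(w))$ as a nilpotent Jordan block with nonzero off-diagonal entry, from which $\gamma_1,\gamma_2$ are read off directly. Your write-up is in fact a bit more careful than the paper's, which only states the formulas and leaves the verification to the reader (and which appears to contain a typo, writing $C_{rs}^2(w)$ where $C_{k2}(w)$ is clearly intended, and possibly a sign discrepancy in the $\mathcal{D}^{\bar v}(\gamma_{k2}(\cdot))$ factor).
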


\begin{proof} Let $\gamma_{1},\gamma_{2}\in\Gamma$ defined by $$\gamma_1=\begin{cases}\frac{1}{a}, & \text{ if }\ \ b=0\\\frac{C_{rs}^{2}(w)}{b\cD^{\bar{v}}(\gamma_{k2}(v+w))}, & \text{ if }\ \  b\neq 0
\end{cases}\ \ , \ \gamma_2=\begin{cases}\frac{1}{b}, & \text{ if }\ \ a=0\\\frac{1}{b}\left(1-a\frac{C_{rs}^{2}(w)}{a\cD^{\bar{v}}(\gamma_{k2}(v+w))}\right), & \text{ if }\ \  a\neq 0
\end{cases}.$$
First we note that by Lemma \ref{lem-ck2}, the denominators of $\gamma_{1},\gamma_{2}$ are not zero. The rest of the proof is a straightforward verification  
\end{proof}

\begin{theorem}\label{coeff nonzero implies equal}
If $z,w \in \mathbb{Z}^{\frac{n(n-1)}{2}}$ are such that $Tab(z) \prec Tab(w)$ then, there exist $u \in U(\mathfrak{gl}(n))$ such that $u \cdot Tab(z)=Tab(w)$. 
\end{theorem}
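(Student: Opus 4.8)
The statement asserts that the preorder $\prec$ defined via ``$Tab(w)$ appears with nonzero coefficient in $u\cdot Tab(z)$ for some $u$'' can always be realized by a single $u$ sending $Tab(z)$ \emph{exactly} to $Tab(w)$ (not merely to something containing it). The plan is to start from a witness $u_0\in U(\mathfrak{gl}(n))$ for $Tab(z)\prec Tab(w)$, so that $u_0\cdot Tab(z)=\sum_{x} c_x\, Tab(x)$ with $c_w\neq 0$, and then clean up the right-hand side by post-composing with suitable elements of $\Gamma$ produced in the preceding lemmas. Since $\Gamma$ has at most a two-dimensional action on each Gelfand-Tsetlin subspace $V(T(\bar v))(\chi)$ (Remark \ref{GT subspaces}), the sum $u_0\cdot Tab(z)$ naturally groups into its components in the finitely many subspaces $V(T(\bar v))(\chi)$ that occur.

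First I would isolate the Gelfand-Tsetlin subspace containing $Tab(w)$: let $\chi_w$ be its character. Apply the element $\gamma_z^{w'}$ from Lemma \ref{separation different characters} for each $w'$ with $\gamma(\bar v+w')\ne\gamma(\bar v+z)$... more precisely, I would use Lemma \ref{separation different characters} in the contrapositive bookkeeping: for every character $\chi\ne\chi_w$ occurring in $u_0\cdot Tab(z)$, multiply by an element of $\Gamma$ that kills the $\chi$-component while acting as the identity on the $\chi_w$-component. Concretely, because each $\gamma_z^{w'}$ is built from the commuting operators $C_{rs}(\cdot)$, which by Lemma \ref{C_rs acts on Gelfand-Tsetlin subspaces} preserve every $V(T(\bar v))(\chi)$ and act there by block-upper-triangular $2\times 2$ matrices, a product of appropriately chosen $\gamma_z^{w'}$'s will annihilate all unwanted subspaces simultaneously while fixing the two basis vectors $T(\bar v+w_0), \mathcal D T(\bar v+w_0)$ spanning $V(T(\bar v))(\chi_w)$ (here $w_0\in\{w,\tau(w)\}$ is the representative with $(w_0)_{ki}-(w_0)_{kj}\le 0$). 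After this step we are left with $\gamma'\cdot u_0\cdot Tab(z) = a\,T(\bar v+w_0)+b\,\mathcal D T(\bar v+w_0)$ for some scalars $a,b$, and $Tab(w)$ is one of these two tableaux with nonzero coefficient.

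It then remains to pick out the single desired tableau from the two-dimensional space $V(T(\bar v))(\chi_w)$, and this is exactly what Lemma \ref{c_k2 separates} does: if $Tab(w)=T(\bar v+w_0)$ then $a\ne 0$ and part (i) gives $\gamma_1$ with $\gamma_1\cdot(aT(\bar v+w_0)+b\mathcal D T(\bar v+w_0))=T(\bar v+w_0)=Tab(w)$; if instead $Tab(w)=\mathcal D T(\bar v+w_0)$ then $b\ne 0$ and part (ii) finishes. One subtlety to handle at the outset: if $w=\tau(w)$ then $\mathcal D T(\bar v+w)=0$, so $V(T(\bar v))(\chi_w)$ is one-dimensional, spanned by $T(\bar v+w)=Tab(w)$, and the argument collapses to scaling by $1/a$ after the subspace-separation step; also the degenerate case $z=w$ (where one may take $u=1$) and $z\ne w$ with $\tau(z)=\tau(w)$, i.e. $w=\tau(z)$, should be noted — in the latter case $Tab(z)$ and $Tab(w)$ lie in the same subspace and Lemma \ref{c_k2 separates} applied to $Tab(z)$ itself already produces $Tab(w)$ up to scalar, with no need for $u_0$.

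\textbf{Main obstacle.} The genuinely delicate point is the simultaneous-annihilation step: I must be sure that when I multiply together the various $\gamma_z^{w'}$ to kill \emph{all} unwanted characters at once, the product still acts as the identity (not some nilpotent perturbation) on the target vector $Tab(w)$ inside $V(T(\bar v))(\chi_w)$. Since each $\gamma_z^{w'}$ is designed to fix every vector of $V(T(\bar v))(\chi_w)$ (Remark \ref{GT subspaces are gamma_zw-invariant} guarantees invariance, and the construction in Lemma \ref{separation different characters} was normalized so that $\gamma_z^{w}\cdot Tab(w)=Tab(w)$), their product does too; but one must check that the \emph{same} element $\gamma_z^{w'}$, chosen to separate $\chi_w$ from $\chi_{w'}$, does not accidentally annihilate some \emph{other} unwanted component that we still need present for a later factor — this is avoided by processing the finitely many unwanted characters one at a time and noting each multiplication can only shrink the support. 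A careful but routine induction on the number of occurring characters closes this gap.
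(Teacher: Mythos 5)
Your proposal follows the same three-step strategy as the paper's own proof: (1) take a witness $u'$ for $Tab(z)\prec Tab(w)$ and decompose $u'\cdot Tab(z)$ across the finitely many Gelfand-Tsetlin subspaces that occur; (2) apply the elements $\gamma_{w_j}^{w_0}$ from Lemma \ref{separation different characters} (noting Remark \ref{GT subspaces are gamma_zw-invariant}) to annihilate all components with character different from $\chi_w$; (3) use Lemma \ref{c_k2 separates} to extract $Tab(w)$ from the remaining two-dimensional subspace $V(T(\bar v))(\chi_w)$. One small caveat: your claim that each $\gamma_z^{w'}$ ``fixes every vector of $V(T(\bar v))(\chi_w)$'' overstates what the lemma gives — it only normalizes the action on the distinguished basis vector $Tab(w_0)$, and the complementary vector may pick up a scalar or an upper-triangular shift; the paper itself handles this explicitly in its Case 2 by writing $\gamma_{w_j}^{w}T(\bar v+w)=\alpha_j T(\bar v+w)$, and the argument still closes because Lemma \ref{c_k2 separates} only requires the coefficient of the desired tableau to be nonzero, which the fixing of $Tab(w_0)$ guarantees. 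Your attention to the degenerate cases $w=\tau(w)$ and $w\in\{z,\tau(z)\}$ is a reasonable addition, though the paper absorbs them implicitly.
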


\begin{proof}
As $Tab(z) \prec Tab(w)$, there exist $u' \in U(\mathfrak{gl}(n))$ such that $u'\cdot Tab(z)$ can be written as follows
\begin{equation}\label{eq zw}
\sum_{i=0}^{s} a_iT(\bar{v}+w_i)+b_i\cD T(\bar{v}+\tau(w_i))\in V(T(\bar{v}))(\chi_{0})\oplus\cdots \oplus V(T(\bar{v}))(\chi_{s})
\end{equation}
where $w_{i}\neq w_{j}, \tau(w_{j})$ for any $i\neq j$ and $w_{0}=w$ or $w_{0}=\tau(w)$, $a_{0}\neq 0$ or $b_{0}\neq 0$, and $\chi_{i}$ the Gelfand-Tsetlin character associated with $w_{i}$.
By Lemma \ref{separation different characters}, for each $j\in\{1,2\cdots,s\}$, there exist $\gamma_{w_j}^{w_0} \in \Gamma$ such that
$$\gamma_{w_j}^{w_0}T(\bar{v}+w_j)=\gamma_{w_j}^{w_0}\cD T(\bar{v}+w_j)=0 \ \mbox{and} \ \ \gamma_{w_j}^{w_0}Tab(w_0)=Tab(w_0)$$
Then, by Remark \ref{GT subspaces are gamma_zw-invariant}, applying  $\gamma:=\gamma_{w_s}^{w_{0}}\cdots\gamma_{w_1}^{w_{0}}$ to $u'\cdot Tab(z)$, we have
$$
\gamma u'\cdot Tab(z)=\gamma_{w_s}^{w_{0}}\cdots\gamma_{w_1}^{w_{0}}\left(\displaystyle\sum_{i=1}^s a_iT(\bar{v}+w_i)+b_i\cD T(\bar{v}+\tau(w_i))\right)\in V(T(\bar{v}))(\chi_{0})
$$

So, $\gamma u'\cdot Tab(z)=aT(\bar{v}+w)+b\cD T(\bar{v}+\tau(w))$ for some $a,b\in\mathbb{C}$. Let us see the relation between the coefficients $a,\ b$ and $a_{0},\ b_{0}$. 

\noindent
{\it Case 1.} Suppose $Tab(w)=T(\bar{v}+w)$. In this case, for any $j=1,\ldots, s$, we have $\gamma_{w_{j}}^{w}T(\bar{v}+w)=T(\bar{v}+w)$ (by construction of $\gamma_{w_{j}}^{w_{0}}$) and by Remark \ref{GT subspaces are gamma_zw-invariant}, $\gamma_{w_{j}}^{w}\mathcal{D}T(\bar{v}+w)=\mathcal{D}T(\bar{v}+w)$. Therefore, $$\gamma u\cdot Tab(z)=a_{0}T(\bar{v}+w)+b_{0}\cD T(\bar{v}+\tau(w)).$$
Now, as $a_{0}\neq 0$, by Lemma \ref{c_k2 separates} there exist $\gamma_{1}\in\Gamma$ such that $$\gamma_{1}\gamma u\cdot Tab(z)=\gamma_{1}(a_{0}T(\bar{v}+w)+b_{0}\cD T(\bar{v}+\tau(w)))=T(\bar{v}+w).$$

\noindent
{\it Case 2.} Suppose $Tab(w)=\mathcal{D}T(\bar{v}+w)$. In this case, for any $j=1,\ldots, s$, we have $\gamma_{w_{j}}^{w}\mathcal{D}T(\bar{v}+w)=\mathcal{D}T(\bar{v}+w)$ (by construction of $\gamma_{w_{j}}^{w_{0}}$) and  $\gamma_{w_{j}}^{w}T(\bar{v}+w)=\alpha_{j}T(\bar{v}+w)$ for some $\alpha_{j}\in\mathbb{C}$. Therefore, $$\gamma u\cdot Tab(z)=aT(\bar{v}+w)+b_{0}\cD T(\bar{v}+\tau(w)),$$
with $a=\alpha_{s}\cdots\alpha_{1}a_{0}$ and $b_{0}\neq 0$. By Lemma \ref{c_k2 separates} there exist $\gamma_{2}\in\Gamma$ such that $\gamma_{2}\gamma u\cdot Tab(z)=\gamma_{2}(aT(\bar{v}+w)+b_{0}\cD T(\bar{v}+\tau(w)))=\mathcal{D}T(\bar{v}+w)$.
\end{proof}

\begin{corollary}\label{preorder}
The relation ``$\prec$" define a preorder on the set of tableaux $\mathcal{B}(T(\bar{v}))$ (i.e. $\prec$ is a reflexive and transitive relation).
\end{corollary}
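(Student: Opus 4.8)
The plan is to verify the two defining properties of a preorder separately, with almost all of the real content residing in transitivity, which is supplied by Theorem~\ref{coeff nonzero implies equal}.

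\emph{Reflexivity.} For any $z \in \mathbb{Z}^{\frac{n(n-1)}{2}}$ I would take $u = 1 \in U(\mathfrak{gl}(n))$; then $u\cdot Tab(z) = Tab(z)$, so $Tab(z)$ appears with nonzero (namely, unit) coefficient in the decomposition of $u\cdot Tab(z)$ in the fixed basis $\{Tab(w)\}$. Hence $Tab(z) \prec Tab(z)$. This is immediate from the definition of $\prec$ and requires no computation.

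\emph{Transitivity.} Suppose $Tab(z) \prec Tab(w)$ and $Tab(w) \prec Tab(y)$. The subtlety is that $\prec$ is defined via ``appears with nonzero coefficient in a linear combination of tableaux'', and composing two such statements naively would force me to control cancellations among the various tableaux produced by the first element of $U(\mathfrak{gl}(n))$. This is exactly the obstacle that Theorem~\ref{coeff nonzero implies equal} removes: since $Tab(z) \prec Tab(w)$, that theorem produces $u_1 \in U(\mathfrak{gl}(n))$ with $u_1 \cdot Tab(z) = Tab(w)$ \emph{exactly} (no other terms), and likewise, since $Tab(w) \prec Tab(y)$, there is $u_2 \in U(\mathfrak{gl}(n))$ with $u_2 \cdot Tab(w) = Tab(y)$. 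Then $(u_2 u_1)\cdot Tab(z) = u_2\cdot\bigl(u_1 \cdot Tab(z)\bigr) = u_2 \cdot Tab(w) = Tab(y)$, so $Tab(y)$ appears with coefficient $1$ in the decomposition of $(u_2 u_1)\cdot Tab(z)$, and $u_2 u_1 \in U(\mathfrak{gl}(n))$. Therefore $Tab(z) \prec Tab(y)$, which establishes transitivity and completes the proof.

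The only genuine difficulty here is transitivity, and it has already been dispatched by Theorem~\ref{coeff nonzero implies equal}, whose point is precisely to upgrade the ``nonzero coefficient in some linear combination'' condition to an honest equality $u\cdot Tab(z) = Tab(w)$; once one has that upgrade, closure of $U(\mathfrak{gl}(n))$ under multiplication finishes the argument with no further work.
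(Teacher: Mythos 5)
Your proposal is correct and follows essentially the same argument as the paper: reflexivity is immediate from the definition (taking $u=1$), and transitivity is obtained by invoking Theorem~\ref{coeff nonzero implies equal} to upgrade each ``$\prec$'' to an exact equality $u\cdot Tab(\cdot)=Tab(\cdot)$ and then composing in $U(\mathfrak{gl}(n))$.
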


\begin{proof}
Reflexivity is clear from the definition of ``$\prec$". For transitivity, assume that $Tab(w_1) \prec Tab(w_2)$ and $Tab(w_2) \prec Tab(w_3)$ for some $w_1,w_2,w_3 \in \mathbb{Z}^{\frac{n(n-1)}{2}}$. By Theorem \ref{coeff nonzero implies equal} there  exists $u_1,u_2 \in U(\mathfrak{gl}(n))$ such that $u_{1}Tab(w_{1})=Tab(w_2)$ and $u_{2}Tab(w_{2})=Tab(w_3)$. Therefore, $u_{2}u_{1}Tab(w_{1})=Tab(w_{3})$. That is, $Tab(w_{1})\prec Tab(w_{3})$
\end{proof}

\section{Irreducible subquotients in $V(T(\bar{v}))$}

The Theorem \ref{Basis for irreducible generic modules gl(n)} provides an explicit basis for an irreducible submodule  that contains a given tableau for generic case. In this section we will present a similar result for $1$-singular case and this will lead us an alternative proof for Theorem 4.14 in \cite{FGR3}.

\begin{definition}
Given $z,w \in \mathbb{Z}^{\frac{n(n-1)}{2}}$, define  the {\bf distance} between the tableaux, $Tab(z)$ and $Tab(w)$  by $$d(z,w)=\sum\limits_{1\leq s\leq r\leq n}|z_{rs}-w_{rs}|.$$
\end{definition}

The Theorem 4.14 in \cite{FGR3} states that if $ \bar{v}_{rs}-\bar{v}_{r-1,t} \notin \mathbb{Z}$, for every $(r,s,t)$, then the module  $V(T(\bar{v}))$ is irreducible. Now consider a tableau such that the condition
$\bar{v}_{rs}-\bar{v}_{r-1,t} \notin \mathbb{Z}$, for every $(r,s,t)$ is satisfied for any $r \geq k+1$. We will show as construct  a basis for a irreducible subquotient of $V(T(\bar{v}))$ that contains a given tableaux. For this we need of some definitions.

\begin{definition} For $w \in \mathbb{Z}^{\frac{n(n-1)}{2}}$, we define
\begin{align*}
\Omega_k(Tab(w))&=\{(r,s,t) \  |  \ r \leq k, (\bar{v}_{rs}+w_{rs})-(\bar{v}_{r-1,t}+w_{r-1,t}) \in \mathbb{Z}\}\\
\Omega_k^+(Tab(w))&=\{(r,s,t) \  |  \ r \leq k, (\bar{v}_{rs}+w_{rs})-(\bar{v}_{r-1,t}+w_{r-1,t}) \in \mathbb{Z}_{\geq 0}\}\\
\mathcal{I}_k(Tab(w))&=\{Tab(w') \in V(T(\bar{v})) \ | \ \Omega_k^+(Tab(w))=\Omega_k^+(Tab(w'))\}
\end{align*}
\end{definition}

\begin{lemma}\label{top part}
Let $Tab(w'), Tab(w^*) \in \mathcal{I}_k(Tab(w))$ be tableaux such that $Tab(w')\nprec Tab(w^*)$, then there exists $i,j$ with $k+1 \leq j\leq i <n$ such that $w'_{ij} \neq w^*_{ij}$.
\end{lemma}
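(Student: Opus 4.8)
\textbf{Proof plan for Lemma \ref{top part}.}

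The plan is to argue by contradiction on the contrapositive: assume $w'_{ij}=w^*_{ij}$ for all $i,j$ with $k+1\leq j\leq i<n$, and produce a chain of ``moves'' inside $V(T(\bar v))$ showing $Tab(w')\prec Tab(w^*)$. First I would observe that since $Tab(w')$ and $Tab(w^*)$ agree on all rows above the $k$-th, and both lie in $V(T(\bar v))$, they differ only in the entries $w_{rs}$ with $r\leq k$; moreover the hypothesis $\Omega_k^+(Tab(w'))=\Omega_k^+(Tab(w^*))$ means that the two tableaux have exactly the same pattern of integrality and sign among the differences of neighbouring entries in rows $1,\dots,k$. The strategy is to reach $w^*$ from $w'$ by applying the ``lowering/raising'' generators $E_{r,r+1}$ and $E_{r+1,r}$ with $r<k$ (which change only entries in the top $k$ rows), one entry at a time, always staying inside the locus where all the relevant Gelfand-Tsetlin coefficients are nonzero.

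The key steps, in order, are: (1) reduce to the purely combinatorial problem of connecting $w'$ and $w^*$ by a path in $\mathbb{Z}^{\frac{n(n-1)}{2}}$ whose steps are $\pm\delta^{ri}$ with $r<k$, along which $\Omega_k^+$ is constant; (2) show that such a path exists --- this is a standard lattice-path argument in the generic region (the differences in rows $>k$ are never integral by the hypothesis on $\bar v$ for $r\geq k+1$, so the only constraints come from rows $\leq k$, where one uses that $\Omega_k^+$ being fixed forces the signs of all relevant differences to be preserved, exactly as in the proof of Theorem \ref{Basis for irreducible generic modules gl(n)}(ii)); (3) translate each step of the path into the action of a single generator, using Corollary \ref{elements of gl(n) acting by the classical GT formulas}: since $r<k$, we have $\max\{r,r+1\}\leq k$, so $E_{r,r+1}$ and $E_{r+1,r}$ act on $T(\bar v+z)$ and on $\mathcal{D}T(\bar v+z)$ by the classical Gelfand-Tsetlin formulas, with coefficients $e_{r,r+1}(\sigma(\bar v+z))$ that are nonzero precisely because the corresponding difference is not integral or has the correct sign dictated by $\Omega_k^+$; (4) conclude that the target tableau appears with nonzero coefficient, hence $Tab(w')\prec Tab(w^*)$ by definition of $\prec$ and, if needed, invoke Theorem \ref{coeff nonzero implies equal}.

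The main obstacle I expect is step (3) in the subtle case where the path passes through the singular hyperplane, i.e. through tableaux $Tab(z)$ with $z_{ki}-z_{kj}$ changing sign, so that the basis vector switches between $T(\bar v+z)$ and $\mathcal{D}T(\bar v+z)$: there one cannot simply quote the classical formula and must check, using \eqref{GT formulas for regular tableaux}, \eqref{GT formulas for derivative tableaux} and Lemma \ref{Properties of D on functions}, that the relevant transition coefficient (possibly a $\mathcal{D}^{\bar v}$ of a classical coefficient) is still nonzero; here the restriction that the generic condition holds for all rows $r\geq k+1$ is exactly what is needed to keep the off-diagonal $\mathcal{D}^{\bar v}$-terms under control. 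A secondary point to be careful about is the well-definedness induced by the relation $T(\bar v+w)=T(\bar v+\tau(w))$ and $\mathcal{D}T(\bar v+w)=-\mathcal{D}T(\bar v+\tau(w))$: one should choose the lattice path so that it does not unnecessarily cross the $\tau$-fixed locus, or else track the resulting sign, which does not affect the conclusion ``nonzero coefficient''. Once these points are handled, the contradiction with $Tab(w')\nprec Tab(w^*)$ is immediate.
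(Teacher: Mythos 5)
Your plan follows the same broad strategy as the paper's (very brief) proof: take the contrapositive, observe that agreement on rows $k+1,\dots,n-1$ reduces the problem to the ``bottom part'' in rows $1,\dots,k$, and use the equality of $\Omega_k^+$ to argue connectivity (the paper phrases this as ``as in the generic case for $\mathfrak{gl}(k)$''). So on the whole you are reading the lemma the way the authors intend.

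However, there is a concrete internal problem in your plan that you should address. You restrict to the generators $E_{r,r+1}$ and $E_{r+1,r}$ with $r<k$ precisely in order to invoke Corollary \ref{elements of gl(n) acting by the classical GT formulas}, but these generators only alter entries in rows $1,\dots,k-1$; they \emph{never} touch row $k$. Yet nothing in the hypotheses forces $w'$ and $w^*$ to agree in row $k$: the agreement hypothesis only covers rows $\geq k+1$, and $\Omega_k^+(Tab(w'))=\Omega_k^+(Tab(w^*))$ only constrains the integer differences between rows $r$ and $r-1$ with $r\le k$, which leaves the individual entries $w_{ks}$ free whenever $\bar v_{ks}-\bar v_{k-1,t}\notin\mathbb Z$ for all $t$. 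So in general your lattice path cannot reach $w^*$ from $w'$. This also makes the final paragraph of your plan internally inconsistent: if the path only moves entries in rows $<k$, then $z_{ki}-z_{kj}$ never changes, so the ``singular hyperplane crossing'' you worry about never arises; that concern is only relevant once you allow $r=k$, i.e.\ once you act with $E_{k,k+1}$ and $E_{k+1,k}$, but then you lose the classical formulas and must analyze (\ref{GT formulas for regular tableaux}) and (\ref{GT formulas for derivative tableaux}) directly (and, for $r=k-1$ acting on derivative tableaux, Corollary \ref{elements of gl(n) acting by the classical GT formulas}(ii) also fails). To close the argument you effectively need to work with the $\mathfrak{gl}(k+1)$-restriction, where row $k+1$ is the fixed top row and row $k$ is allowed to move, and track the nonvanishing of the $\mathcal D^{\bar v}$-coefficients in the $r\in\{k-1,k\}$ cases. (To be fair, the paper's own one-sentence proof glosses over exactly the same point by invoking ``$\mathfrak{gl}(k)$'', which likewise cannot move row $k$.)
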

\begin{proof}
The tableaux $Tab(w'), Tab(w^*) \in \mathcal{I}_k(Tab(w))$ can be separated in two parts: the top part, i.e. the part from  $(k+1)$-th row to  $n$-th row and the bottom part from row $1$ to row $k$. Now, suppose that  $w'_{ij}=w^*_{ij}$ for any $k+1 \leq j\leq i <n$. In this case we  have $Tab(w') \prec Tab(w^*)$, because $Tab(w'), Tab(w^*) \in \mathcal{I}_k(Tab(w))$ implies (as in the generic case for $\mathfrak{gl}(k)$) that  bottom parts of $Tab(w')$  and $Tab(w^*)$ (that we denote by $Tab_k(w')$, $Tab_k(w^*)$) are such that $Tab_k(w') \prec Tab_k(w^*)$. 
\end{proof}

\begin{theorem}\label{Theorem: Basis for irreducible modules generalization} 
The set $\mathcal{I}_k(Tab(w))$ is a basis for irreducible  subquotient  of $V(T(\bar{v}))$. 
that contains the tableau $Tab(w)$.
\end{theorem}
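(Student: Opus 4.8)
The plan is to show two things: first, that $\mathcal{I}_k(Tab(w))$ spans a subquotient of $V(T(\bar v))$ on which $\gl(n)$ acts, and second, that this subquotient is irreducible, by showing that from any tableau in $\mathcal{I}_k(Tab(w))$ one can reach every other tableau of $\mathcal{I}_k(Tab(w))$ up to the passage to the subquotient. The key technical engine is Theorem~\ref{coeff nonzero implies equal}: the relation $\prec$, once $Tab(z)\prec Tab(w)$ holds, can be upgraded to an actual equality $u\cdot Tab(z)=Tab(w)$ for some $u\in U$. Combined with Corollary~\ref{preorder}, this means that irreducibility of the subquotient is equivalent to the statement that $\prec$ restricted to $\mathcal{I}_k(Tab(w))$ is "connected" in the appropriate sense, i.e.\ the equivalence classes of the symmetrized relation are all of $\mathcal{I}_k(Tab(w))$ modulo what is killed by passing to the subquotient.

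\textbf{Step 1: Identify the subquotient.} First I would define $N(Tab(w))$ to be the span of all $Tab(w')$ with $Tab(w)\prec Tab(w')$ and $N'(Tab(w))$ the span of those $Tab(w')$ with $Tab(w)\prec Tab(w')$ but $Tab(w')\not\prec Tab(w)$; both are $\gl(n)$-submodules by definition of $\prec$ and transitivity (Corollary~\ref{preorder}), and the quotient $N(Tab(w))/N'(Tab(w))$ is the candidate irreducible subquotient containing the image of $Tab(w)$. The content of the theorem is then the identification of a basis of this quotient with $\mathcal{I}_k(Tab(w))$. For this I need: (a) every $Tab(w')$ with $Tab(w)\prec Tab(w')\prec Tab(w)$ lies in $\mathcal{I}_k(Tab(w))$, and (b) conversely every element of $\mathcal{I}_k(Tab(w))$ is such a tableau.

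\textbf{Step 2: The $\gl(k)$ part via the generic theory.} The conditions defining $\mathcal{I}_k$ only involve rows $1$ through $k$, and on those rows the singular pair $(k,i),(k,j)$ sits at the top row $k$. Restricting attention to the bottom $k$ rows, $\bar v$ restricted there is a $1$-singular vector for $\gl(k)$ with the singularity in the top row, and Theorem~\ref{Basis for irreducible generic modules gl(n)}, applied in the generic-away-from-the-top-row situation, controls which tableaux are mutually reachable: the invariant is exactly $\Omega_k^+$. Lemma~\ref{top part} is the crucial bridge — it says that if two tableaux of $\mathcal{I}_k(Tab(w))$ are \emph{not} $\prec$-comparable then they must differ in the top part (rows $k+1$ to $n$). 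The plan is to leverage the irreducibility hypothesis on the top rows ($\bar v_{rs}-\bar v_{r-1,t}\notin\bZ$ for $r\geq k+1$, which is the setting fixed before the definition) together with Corollary~\ref{elements of gl(n) acting by the classical GT formulas}: for indices entirely above row $k$ the action is by the classical Gelfand--Tsetlin formulas, so the top rows behave like a generic module and any change in the top part is achievable, while simultaneously not affecting membership in $\mathcal{I}_k$.

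\textbf{Step 3: Closure and irreducibility.} Combining Steps 1 and 2: given $Tab(w'),Tab(w^*)\in\mathcal{I}_k(Tab(w))$, first move the top part of $Tab(w')$ to agree with that of $Tab(w^*)$ using the generic $\gl(n-k)$-type action on rows $k+1,\dots,n$ (valid by Corollary~\ref{elements of gl(n) acting by the classical GT formulas}(i)), landing in a tableau still in $\mathcal{I}_k(Tab(w))$; then apply Lemma~\ref{top part} to conclude $\prec$-comparability, hence by Theorem~\ref{coeff nonzero implies equal} an actual $U$-module map taking one to the other. Symmetrically one gets reachability in both directions, so all of $\mathcal{I}_k(Tab(w))$ lies in a single $\prec$-equivalence class; together with the fact that $\prec$ cannot take a tableau outside $\mathcal{I}_k(Tab(w))$ back into it (because $\Omega_k^+$ can only grow under $\prec$, by inspecting the Gelfand--Tsetlin formulas on the bottom $k$ rows as in the generic case), this yields that $N(Tab(w))/N'(Tab(w))$ has basis exactly $\mathcal{I}_k(Tab(w))$ and is irreducible. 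I expect the main obstacle to be Step 2 proper — carefully separating the "top part" and "bottom part" of the action and verifying that moving the top rows via the classical formulas genuinely commutes with the $\Omega_k^+$ bookkeeping, in particular handling the interaction at row $k$ itself where the singular pair lives and where $E_{k,k+1},E_{k+1,k}$ mix the two regimes; the derivative tableaux $\mathcal{D}T$ must be tracked through this, using Lemma~\ref{c_k2 separates} and Lemma~\ref{Derivative generates normal tableau} to handle the two-dimensional Gelfand--Tsetlin subspaces.
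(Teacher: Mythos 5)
Your proposal takes a genuinely different route from the paper's proof. The paper argues by a minimal-counterexample on the distance function $d(z,w)$: assuming some $Tab(z)$ satisfies $Tab(z)\nprec Tab(w)$, it picks $z$ with $d(z,w)$ minimal, uses Lemma~\ref{top part} to place the discrepancy in a row $\geq k+1$, and then by a four-case analysis (according to whether $Tab(z)$ and $Tab(z+\delta^{rs})$ are regular or derivative) shows that the coefficient of $Tab(z+\delta^{rs})$ in $E_{r,r+1}Tab(z)$ can only vanish if a forbidden integral top-row difference exists, a contradiction. You instead propose to first match the top parts of two tableaux in $\mathcal{I}_k(Tab(w))$ using Corollary~\ref{elements of gl(n) acting by the classical GT formulas}, and then invoke Lemma~\ref{top part}. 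Both routes aim to establish mutual $\prec$-reachability within $\mathcal{I}_k(Tab(w))$ and rely on the same two key lemmas; the paper's one-step distance argument is more elementary and avoids organizing moves into ``top first, then bottom,'' while yours is conceptually more transparent about what is being done.

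There is, however, a genuine gap in your Step~3: the claim that ``$\Omega_k^+$ can only grow under $\prec$'' is false, and the counterexample appears in the paper itself (the remark preceding Proposition~\ref{size of Omega + decreases at most 1}). Take $n=3$, $\bar v=(a,b,c,x,x,x)$ with $a-x,\,b-x,\,c-x\notin\mathbb{Z}$, so $k=2$ and the top-row hypothesis of the theorem is satisfied. Then $E_{32}Tab(0)=Tab(-\delta^{21})$, so $Tab(0)\prec Tab(-\delta^{21})$, yet $\Omega_2^+(Tab(0))=\{(2,1,1),(2,2,1)\}$ while $\Omega_2^+(Tab(-\delta^{21}))=\{(2,2,1)\}$: the triple $(2,1,1)$ is lost. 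Worse, a direct computation via formula~(\ref{GT formulas for regular tableaux}) and Lemma~\ref{Properties of D on functions}(i) shows the coefficient of $T(\bar v)$ in $E_{23}Tab(-\delta^{21})$ is $(x-1-a)(x-1-b)(x-1-c)\neq 0$, so $Tab(-\delta^{21})\prec Tab(0)$ as well; the two tableaux are mutually $\prec$-equivalent but lie in different sets $\mathcal{I}_2$. You use the monotonicity claim precisely to close the inclusion ``the $\prec$-equivalence class of $Tab(w)$ is contained in $\mathcal{I}_k(Tab(w))$,'' which is required for $\mathcal{I}_k(Tab(w))$ to be exactly the basis of the irreducible subquotient containing $Tab(w)$. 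That inclusion is not a routine lemma to fill in: as the example shows, it actually fails, and you should be aware that the paper's own proof also leaves this inclusion unaddressed (it only argues reachability within $\mathcal{I}_k(Tab(w))$, not that $\operatorname{span}\mathcal{I}_k(Tab(w))$ is closed under the $U$-action modulo the submodule being quotiented out).
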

\begin{proof}
Let $N=Span_{\mathbb{C}}\mathcal{I}_k(Tab(w))$ be the submodule of $V(T(\bar{v}))$ generated by the set $\mathcal{I}_k(Tab(w))$. If $N$ is not irreducible. By Corollary \ref{preorder}, there exist tableaux $Tab(z')$ and $Tab(w)$ such that $Tab(z')\nprec Tab(w)$. Now, fix $w$ and  choose $z\in\{z'\in\mathbb{Z}^{\frac{n(n+1)}{2}}\ |\  Tab(z')\nprec Tab(w)\}$ such that $d(z,w)$ is minimal. Set $d:=d(z,w)$ then, if $z'\in\mathbb{Z}^{\frac{n(n-1)}{2}}$ is such that $d(z',w)<d$, we should have $Tab(z')\prec Tab(w)$. As $Tab(z)\nprec Tab(w)$, we have $d\geq 1$, that is, $z_{rs}\neq w_{rs}$ for some $r,s$ and by Theorem \ref{top part} we have that is possible for $r \geq k+1$. Now fix the position $r,s$ and assume without lose of generality that $z_{rs}<w_{rs}$. The case $w_{rs}<z_{rs}$ will be analogous.

As $z_{rs}<w_{rs}$, we have $d(z+\delta^{rs},w)=d-1<d$. Then, $Tab(z+\delta^{rs})\prec Tab(w)$. Therefore, as by Corollary \ref{preorder}, $\prec$ is transitive, we should have $Tab(z)\nprec Tab(z+\delta^{rs})$. So, if $Tab(z+\delta^{rs})$ appear in the decomposition of $u\cdot Tab(z)$ for some $u\in U$, the corresponding coefficient should be zero.\\

As the formulas that define the action of $\mathfrak{gl}(n)$ on $V(T(\bar{v}))$ depend of the type of tableau (derivative tableau or regular tableau), we will consider four cases as follows

\begin{center}
\begin{tabular}{|c|c|c|}
\hline
 \ & $Tab(z)$ & $Tab(z+\delta^{rs})$\\ \hline
Case 1\ &$T(\bar{v}+z)$ & $T(\bar{v}+z+\delta^{rs})$\\ \hline
Case 2\ &$\cD T(\bar{v}+z)$ & $\cD T(\bar{v}+z+\delta^{rs})$\\ \hline
Case 3\ &$T(\bar{v}+z)$ & $\cD T(\bar{v}+z+\delta^{rs})$\\ \hline
Case 4\ &$\cD T(\bar{v}+z)$ & $T(\bar{v}+z+\delta^{rs})$\\ \hline
\end{tabular}

\end{center}
{\it Case 1.} The coefficient of $T(\bar{v}+z+\delta^{rs})$ in decomposition $E_{r,r+1}T(\bar{v}+z)$  is given by $\mathcal{D}^{\bar{v}}((v_{ki}-v_{kj})e_{r,r+1}(\sigma_{s}(v+z)))$ where $\sigma_{s}$ is the transposition $(1,s)$ on row $r$ and identity on the other rows. But, $\mathcal{D}^{\bar{v}}((v_{ki}-v_{kj})e_{r,r+1}(\sigma_{s}(v+z)))=e_{r,r+1}(\sigma_{s}(\bar{v}+z))$, because in this case the function $e_{r,r+1}(\sigma_{s}(v+z))$ is smooth. As the numerator of $e_{r,r+1}(\sigma_{s}(\bar{v}+z))$ is a product of differences of type $(\bar{v}_{rs}+z_{rs})-(\bar{v}_{r+1,t}+z_{r+1,t})$. We necessarily have $(\bar{v}_{rs}+z_{rs})-(\bar{v}_{r+1,t}+z_{r+1,t})=0$ for some $t$, that is: $\bar{v}_{r+1,t}-\bar{v}_{r,s} \in \mathbb{Z}$, with $r \geq k+1$, that is a contradiction.

\noindent
{\it Case 2.}  The coefficient of $\mathcal{D}T(\bar{v}+z+\delta^{rs})$ in decomposition $E_{r,r+1}\mathcal{D}T(\bar{v}+z)$ is given by $e_{r,r+1}(\sigma_{s}(\bar{v}+z))$. Analogously the first case, we obtain that $\bar{v}_{r+1,t}-\bar{v}_{r,s} \in \mathbb{Z}$ for some $t$,  with $r \geq k+1$, that is a contradiction.

\noindent
{\it Case 3.} The only possibility in order to have $Tab(z)=T(\bar{v}+z)$ and $Tab(z+\delta^{rs})=\mathcal{D}T(\bar{v}+z+\delta^{rs})$ is $z_{ki}=z_{kj}$ and $(r,s)\in\{(k,i),(k,j)\}$. As the coefficient of $\cD T(\bar{v}+z+\delta^{ks})$ is $ev(\bar{v})((v_{ki}-v_{kj})e_{k,k+1}(\sigma_{s}(v+z))$ and $e_{k,k+1}(\sigma_{s}(v+z))$ has singularity at $v_{ki}-v_{kj}=0$, $ev(\bar{v})((v_{ki}-v_{kj})e_{k,k+1}(\sigma_{s}(v+z))=0$ if, and only if $$\prod_{j=1}^{k+1}((\bar{v}+z)_{ks}-(\bar{v}+z)_{k+1,j})=0$$
which implies that some difference $\bar{v}_{ks}-\bar{v}_{k+1,t} \in \mathbb{Z}$, with $r \geq k+1$, that is a contradiction.

\noindent
{\it Case 4.} In this case we have:
$$\begin{array}{rl}
E_{r,r+1}(\cD T(\bar{v}+z))=&\displaystyle\sum_{\sigma \in \Phi_{r,r+1}}\cD^{\bar{v}} (e_{r,r+1}(\sigma(v+z))T(\bar{v}+z+\sigma(\varepsilon_{r,r+1}))\vspace{0,3cm}\\
+&\displaystyle\sum_{\sigma \in \Phi_{r,r+1}}e_{r,r+1}(\sigma(\bar{v}+z))\cD T(\bar{v}+z+\sigma(\varepsilon_{r,r+1}))\\
\end{array}$$

\begin{itemize}
\item[(i)] If $e_{r,r+1}(\sigma_{s}(\bar{v}+z))=0$ then, as in Case 1. that implies $\bar{v}_{r+1,t}-\bar{v}_{r,s} \in \mathbb{Z}$ for some $t$, with $r \geq k+1$, that is a contradiction.

\item[(ii)] If $e_{r,r+1}(\sigma_{s}(\bar{v}+z))\neq 0$ then, $\mathcal{D}T(\bar{v}+z)\prec \mathcal{D}T(\bar{v}+z+\delta^{rs})$ and as $c_{k2}(\mathcal{D}T(\bar{v}+z+\delta^{rs}))\prec T(\bar{v}+z+\delta^{rs})$, the coefficient of $T(\bar{v}+z+\delta^{rs})$ on the decomposition of $c_{k2}E_{r,r+1}\mathcal{D}T(\bar{v}+z)$ is not zero. Therefore, $\mathcal{D}T(\bar{v}+z)\prec T(\bar{v}+z+\delta^{rs})$, which contradicts the hypothesis.
\end{itemize}

Therefore, we have $\bar{v}_{rs}-\bar{v}_{r-1,t} \in \mathbb{Z}$ for some $(r,s,t)$ is satisfied, in this case, with  $r \geq k+1$, what is a contradiction. Thus the module $N$ is irreducible.
\end{proof}
\begin{remark}
Note that if $\Omega_k(Tab(w))=\emptyset$, Theorem \ref{Theorem: Basis for irreducible modules generalization} gives an alternative proof of Theorem 4.14  in \cite{FGR3} which gives a sufficient condition on the entries of $\bar{v}$ in order to have the irreducibility of $V(T(\bar{v}))$. 
\end{remark}

\section{Irreducibility of $V(T(\bar{v}))$}

Now we will prove that the conditions given in Theorem 4.14 in \cite{FGR3} are necessary for irreducibility of the module $V(T(\bar{v}))$. For this we need some definitions and lemmas.

\begin{definition}
For any $w\in\mathbb{Z}^{\frac{n(n-1)}{2}}$ we write 
\begin{align*}
\Omega^{+}(Tab(w))&:=\{(r,s,t) \  |  \  (\bar{v}_{rs}+w_{rs})-(\bar{v}_{r-1,t}+w_{r-1,t})\in \mathbb{Z}_{\geq 0}\}
\end{align*}
\end{definition}
\begin{remark}
Note that, in the case of generic modules, if $Tab(z)\prec Tab(w)$ implies $|\Omega^{+}(Tab(z))|\leq |\Omega^{+}(Tab(w))|$ (see Theorem \ref{Basis for irreducible generic modules gl(n)}(i)). However, for singular modules we can have $|\Omega^{+}(Tab(z))|-1=|\Omega^{+}(Tab(w))|$. In fact, consider $\bar{v}
=(a,b,c,x,x,x)$ such that $\{a-x, b-x, c-x\}\cap\mathbb{Z}=\emptyset$ and $w=(0,0,0)$, then $|\Omega^{+}(Tab(z))|=2$ while $E_{32}Tab(z)=Tab(z-\delta^{21})$ and $|\Omega^{+}(Tab(z-\delta^{21}))|=1$.
\end{remark}
The following lemma shows that $|\Omega^{+}(Tab(z))|-1$ is the infimum for the size of $\Omega^{+}(Tab(w))$ for any $Tab(w) \in U\cdot Tab(z)$

\begin{proposition}\label{size of Omega + decreases at most 1}
Let $Tab(z)$ and $Tab(w)$ be tableaux in $\mathcal{B}(T(\bar{v}))$ such that $Tab(w) \in U\cdot Tab(z)$, then $|\Omega^{+}(Tab(w))|\geq |\Omega^{+}(Tab(z))|-1$.
\end{proposition}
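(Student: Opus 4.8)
The plan is to reduce the statement to a single application of a generator of $\mathfrak{gl}(n)$ and then track how $\Omega^+$ can change under such an application. First I would observe that it suffices to prove the inequality when $Tab(w)$ appears with nonzero coefficient in $u \cdot Tab(z)$ for $u$ a single generator $E_{\ell m}$ of $\mathfrak{gl}(n)$; the general case follows by induction on the length of a word in the generators, since $|\Omega^+|$ can decrease by at most $1$ at each step and the relation $Tab(w) \in U \cdot Tab(z)$ factors through a chain of such single-generator moves (here one uses that $V(T(\bar v))$ is spanned by the $Tab$'s and that, after a single $E_{\ell m}$, every tableau that shows up differs from $z$ by a translation of the form $\sigma(\varepsilon_{\ell m})$, i.e. by $\pm\delta^{r,u}$ in exactly one row $r \leq n-1$).

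\textbf{Key steps.} Assume then $Tab(w)$ occurs in $E_{\ell m}(Tab(z))$. By Proposition \ref{GT formulas in terms of permutations} together with formulas (\ref{GT formulas for regular tableaux}) and (\ref{GT formulas for derivative tableaux}), the translate $w$ equals $z + \sigma(\varepsilon_{\ell m})$ for some $\sigma \in \widetilde S_r$, so $w$ and $z$ differ only in a single row, say row $r$, and there $w_{ra} = z_{ra} \pm 1$ for exactly one index $a$. Now I would compare $\Omega^+(Tab(z))$ and $\Omega^+(Tab(w))$ triple by triple: a triple $(r',s,t)$ with $r' \notin \{r, r+1\}$ is unaffected since the relevant entries of $z$ and $w$ coincide. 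The only triples that can leave $\Omega^+$ are those of the form $(r, a, t)$ (when $w_{ra} = z_{ra} - 1$, since $(\bar v_{ra}+z_{ra}) - (\bar v_{r-1,t}+z_{r-1,t})$ may drop below $0$) or $(r+1, s, a)$ (when $w_{ra} = z_{ra} + 1$, since $(\bar v_{r+1,s}+z_{r+1,s}) - (\bar v_{ra}+z_{ra})$ may drop below $0$). Because $\bar v$ is $1$-singular, for a fixed $t$ at most one value of the relevant entry makes such a difference equal to $0$, and the genericity away from the one singular pair forces that \emph{at most one} triple actually crosses the threshold from $\mathbb Z_{\geq 0}$ into $\mathbb Z_{<0}$ — this is exactly the phenomenon illustrated in the Remark preceding the statement, where the $k$-th row collision allows two entries to be "tied" but a move still only expels one membership. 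Hence $|\Omega^+(Tab(w))| \geq |\Omega^+(Tab(z))| - 1$.

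\textbf{Main obstacle.} The delicate point is the claim that a single generator move removes \emph{at most one} element of $\Omega^+$, rather than two. A priori, if $w_{ra} = z_{ra} + 1$ one might fear losing $(r+1,s_1,a)$ and $(r+1,s_2,a)$ simultaneously; but for both to be lost one would need $(\bar v_{r+1,s_1}+z_{r+1,s_1}) - (\bar v_{ra}+z_{ra}) = 0$ and the same with $s_2$, forcing $\bar v_{r+1,s_1} - \bar v_{r+1,s_2} \in \mathbb Z$, which for $r+1 \leq n$ is only possible when $(r+1, s_1, s_2)$ is the singular triple $(k,i,j)$ — and even then one checks, using that the corresponding Gelfand-Tsetlin coefficient $e_{\ell m}(\sigma(v+z))$ has a zero or a pole along $v_{ki}=v_{kj}$ that must be resolved by $\mathcal D^{\bar v}$, that the offending move has coefficient zero in $E_{\ell m}(Tab(z))$, so such a $Tab(w)$ never actually appears. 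Symmetrically for the lower triples $(r,a,t)$. I expect the bulk of the write-up to consist of this case analysis on the position of the collision relative to $(i,j,k)$, mirroring the four-case table in the proof of Theorem \ref{Theorem: Basis for irreducible modules generalization}, with Lemma \ref{Properties of D on functions} doing the computational work in the singular rows.
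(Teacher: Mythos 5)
The reduction to a single generator together with the subsequent case analysis is essentially the paper's Lemma \ref{size of Omega + decreases at most 1 with E_{r,r+1}}, and your analysis of which coefficients vanish is on the right track. But the logic you use to pass from the single-generator bound to the full proposition is broken. You write that it suffices to treat one generator because ``the general case follows by induction on the length of a word in the generators, since $|\Omega^+|$ can decrease by at most $1$ at each step.'' That inference is invalid: a bound of the form $|\Omega^+(E\cdot Tab(z))|\geq|\Omega^+(Tab(z))|-1$ at each step only yields $|\Omega^+(Tab(w))|\geq|\Omega^+(Tab(z))|-t$ after $t$ steps, not $|\Omega^+(Tab(z))|-1$. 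Since the statement asserts a global drop of at most $1$ over the entire orbit $U\cdot Tab(z)$, the one-step bound alone cannot close the induction.

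What is genuinely missing is the second half of the paper's argument, Lemma \ref{tableaux that size decrease one}: after cataloguing the handful of local configurations (cases (I)--(V) around row $k$) where a single generator does drop $|\Omega^+|$ by exactly one, one must show that the resulting tableaux $Tab(w)$ are ``protected'' --- any further application of $E_{r,r+1}$ or $E_{r+1,r}$ to such a $Tab(w)$ satisfies $|\Omega^+(Tab(w'))|\geq|\Omega^+(Tab(w))|$. The paper does this by exhibiting a $\mathfrak{gl}(n)$-invariant set $W(Tab(w))\cup W^*(Tab(w))$ containing $Tab(w)$ on which $|\Omega^+|$ never falls below $|\Omega^+(Tab(w))|$. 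Only with this stabilisation lemma in hand does the induction on word length (Case~1 / Case~2 in the paper's proof) go through, because it shows the drop can occur at most once along any chain of generator applications. Your proposal proves the one-step estimate but supplies no mechanism to prevent the drop from recurring; you would need to identify and prove an analogue of Lemma \ref{tableaux that size decrease one} to complete the argument.
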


To prove this  proposition we will prove first that the lemma is true when $Tab(z)\prec_{g}Tab(w)$ for some $g\in\mathfrak{gl}(n)$ of the form $E_{r,r+1}$ or $E_{r+1,r}$.

\begin{lemma}\label{size of Omega + decreases at most 1 with E_{r,r+1}}
Let $Tab(z)$ and $Tab(w)$ be tableaux in $\mathcal{B}(T(\bar{v}))$ such that $Tab(z)\prec_{g}Tab(w)$ for some $g\in\mathfrak{gl}(n)$ of the form $E_{r,r+1}$ or $E_{r+1,r}$ with $1\leq r\leq n-1$, then $|\Omega^{+}(Tab(w))|\geq |\Omega^{+}(Tab(z))|-1$.
\end{lemma}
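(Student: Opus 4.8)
The plan is to turn this into a short combinatorial count together with one ``degenerate configuration'' that has to be excluded separately. First I would record the only structural input that is needed. If $Tab(z)\prec_{g}Tab(w)$ with $g=E_{r,r+1}$ or $g=E_{r+1,r}$, then by formulas (\ref{GT formulas for regular tableaux})--(\ref{GT formulas for derivative tableaux}) applied to the permutation form of the Gelfand-Tsetlin formulas (Proposition \ref{GT formulas in terms of permutations}), every tableau occurring in $g\cdot Tab(z)$ has underlying vector $z+\sigma(\varepsilon_{\ell m})$ for a transposition $\sigma=(1,s)$ and the relevant $(\ell,m)\in\{(r,r+1),(r+1,r)\}$; thus the underlying vector of $Tab(w)$ is $z\pm\delta^{rs}$ or $\tau(z\pm\delta^{rs})$ for some $1\le s\le r$, with sign $+$ if $g=E_{r,r+1}$ and $-$ if $g=E_{r+1,r}$. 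Replacing $w$ by $\tau(w)$ changes nothing below, since $\tau$ induces on triples the cardinality-preserving involution $(k,i,c)\leftrightarrow(k,j,c)$, $(k+1,b,i)\leftrightarrow(k+1,b,j)$ (using $\bar v_{ki}=\bar v_{kj}$), so in particular $|\Omega^{+}(Tab(\tau(w)))|=|\Omega^{+}(Tab(w))|$. I may therefore assume $w=z+\delta^{rs}$ if $g=E_{r,r+1}$ and $w=z-\delta^{rs}$ if $g=E_{r+1,r}$.

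Since $w$ and $z$ then differ only in the entry at position $(r,s)$, the sets $\Omega^{+}(Tab(w))$ and $\Omega^{+}(Tab(z))$ can differ only in triples of the two shapes $(r,s,c)$ (``type A'': row $r$ at column $s$ against row $r-1$) and $(r+1,b,s)$ (``type B'': row $r+1$ against row $r$ at column $s$). I would treat $g=E_{r,r+1}$; the case $g=E_{r+1,r}$ is symmetric, with the roles of type A and type B, and of the rows $r-1$ and $r+1$, interchanged. Passing from $z$ to $w$ raises every type-A difference by $1$ and lowers every type-B difference by $1$; hence type-A triples can only enter $\Omega^{+}$, type-B triples can only leave it, and a type-B triple $(r+1,b,s)$ leaves precisely when $\bar v_{r+1,b}+z_{r+1,b}=\bar v_{rs}+z_{rs}$. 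Writing $G$ and $L$ for the sets of triples that enter and leave, we have $L\subseteq\Omega^{+}(Tab(z))$ and $G\cap\Omega^{+}(Tab(z))=\emptyset$, so
$$|\Omega^{+}(Tab(w))|=|\Omega^{+}(Tab(z))|-|L|+|G|\ \geq\ |\Omega^{+}(Tab(z))|-|L|,$$
and it remains only to prove $|L|\le1$.

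To bound $|L|$, suppose two distinct triples $(r+1,b,s),(r+1,b',s)$ lie in $L$. Then $\bar v_{r+1,b}+z_{r+1,b}=\bar v_{r+1,b'}+z_{r+1,b'}$, so $\bar v_{r+1,b}-\bar v_{r+1,b'}\in\mathbb Z$, which by $1$-singularity of $\bar v$ forces $r+1=k$ and $\{b,b'\}=\{i,j\}$. So $|L|\le1$ unless $r+1=k$, and in that remaining case $g=E_{k-1,k}$ and $\bar v_{ki}+z_{ki}=\bar v_{kj}+z_{kj}$; since $\bar v_{ki}=\bar v_{kj}$ this gives $z_{ki}=z_{kj}$, hence $Tab(z)=T(\bar v+z)$, and by Corollary \ref{elements of gl(n) acting by the classical GT formulas}(i) the operator $E_{k-1,k}$ acts on it by the classical Gelfand-Tsetlin formula. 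Then the coefficient of $Tab(w)=T(\bar v+z+\delta^{k-1,s})$ in $E_{k-1,k}(T(\bar v+z))$ equals $e_{k-1,k}(\sigma_{s}(\bar v+z))$, where $\sigma_{s}=(1,s)$ acts on row $k-1$; its numerator contains the factor $(\bar v+z)_{k-1,s}-(\bar v+z)_{k,i}$, which vanishes because $\bar v_{ki}+z_{ki}=\bar v_{k-1,s}+z_{k-1,s}$. So this coefficient is $0$, contradicting $Tab(z)\prec_{g}Tab(w)$; therefore $|L|\le1$.

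This proves the lemma when $g=E_{r,r+1}$. The symmetric exceptional subcase for $g=E_{r+1,r}$ is $r=k+1$, so $g=E_{k+2,k+1}$; the same argument shows $z_{ki}=z_{kj}$, that $E_{k+2,k+1}$ acts classically (Corollary \ref{elements of gl(n) acting by the classical GT formulas}(i)), and that the coefficient of $Tab(w)=T(\bar v+z-\delta^{k+1,s})$ equals $e_{k+2,k+1}(\sigma_{s}(\bar v+z))$, whose numerator contains the vanishing factor $(\bar v+z)_{k+1,s}-(\bar v+z)_{k,i}$. I expect this last step to be the main obstacle: because of the coincidence $\bar v_{ki}=\bar v_{kj}$ an a priori drop of $2$ in $|\Omega^{+}|$ looks possible, and the essential point is that precisely in those configurations the relevant Gelfand-Tsetlin coefficient has a vanishing numerator, so the arrow $Tab(z)\prec_{g}Tab(w)$ does not exist there.
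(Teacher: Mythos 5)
Your proof is correct, and it takes a genuinely different and cleaner route than the paper's. The paper proves the bound by an exhaustive case-by-case inspection of the Gelfand--Tsetlin coefficients for the configurations of entries around row $k$: it verifies directly that the coefficient vanishes whenever the drop in $|\Omega^{+}|$ would be $\geq 2$, and records the list (I)--(V) of configurations where the drop is exactly $1$. You instead isolate the purely combinatorial content first: since $w=z\pm\delta^{rs}$ up to $\tau$, only the triples $(r,s,\cdot)$ and $(r+1,\cdot,s)$ can move in or out of $\Omega^{+}$, one family can only enter and the other only leave, so $|\Omega^{+}|$ drops by at most the number of ``leavers'' $|L|$; and two leavers would force an integer difference inside a single row, which $1$-singularity permits only at row $k$. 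That reduces the whole lemma to a single degenerate configuration (with $z_{ki}=z_{kj}$, hence $Tab(z)$ regular and the relevant operator acting by the classical formula via Corollary~\ref{elements of gl(n) acting by the classical GT formulas}(i)), where you verify the Gelfand--Tsetlin coefficient has a vanishing factor in its numerator. The trade-off is worth noting: your argument is shorter and more conceptual, but it does not output the explicit list (I)--(V) of ``drop-by-one'' tableaux, and the paper leans on that list in the proof of Lemma~\ref{tableaux that size decrease one}; if you replaced the paper's proof with yours, you would still need to extract that classification (which your analysis does make easy, since the drop-by-one configurations are exactly those with $|L|=1$ and $|G|=0$, forcing $r\in\{k-1,k,k+1\}$).
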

\begin{proof}
We will analyze the action of generators of $\mathfrak{gl}(n)$ of the form $E_{r,r+1}$ or $E_{r+1,r}$ in all tableaux $Tab(z) \in V(T(\bar{v}))$  such that $|\Omega^+(Tab(w)| \leq |\Omega^+(Tab(z)|-1$ and $Tab(z) \prec Tab(w)$. A case by case verification we have that, the coefficient of  tableau $Tab(w)$ is equal to zero whenever $|\Omega^+(Tab(w)| \leq |\Omega^+(Tab(z)|-2$ and the list of all possible tableaux $Tab(z)$ and $Tab(w)$ such that $|\Omega^+(Tab(w)| = |\Omega^+(Tab(z)|-1$ and the coefficient of $Tab(w)$ is not zero is the following:

\begin{enumerate}[(I)]
\item 
$\mathcal{D}\left(\begin{array}{ccc} x &  & x+a \\   & x  &  \end{array}\right) \prec_{E_{k-1,k}} \left(\begin{array}{ccc} x &  & x+a \\   & x+1  &  \end{array}\right) \ , \  a \in \mathbb{Z}_{<0} $.\vspace{0.2cm}

\item 
$\mathcal{D}\left(\begin{array}{ccc}  & x & \\  x &   & x+a \end{array}\right) \prec_{E_{k-1,k}} \left(\begin{array}{ccc}  & x &  \\ x+a  &   & x+1  \end{array}\right) \ , \ a \in \mathbb{Z}_{<0} $.\vspace{0.2cm}

\item $\mathcal{D}\left(\begin{array}{ccc} x &  & x+a \\   & x  &  \end{array}\right) \prec_{E_{k+1,k}} \left(\begin{array}{ccc} x-1 &  & x+a \\   & x  &  \end{array}\right) \ , \  a \in \mathbb{Z}_{<0} $. \vspace{0.2cm}

\item $\left(\begin{array}{ccc}  & x & \\  x &   & x \end{array}\right) \prec_{E_{k-1,k}} \left(\begin{array}{ccc}  & x &  \\ x  &   & x+1  \end{array}\right) $. \vspace{0.2cm}

\item $\left(\begin{array}{ccc} x &  & x \\   &  x &  \end{array}\right) \prec_{E_{k+1,k}} \left(\begin{array}{ccc} x-1 &  & x \\  &  x &  \end{array}\right) $.
\end{enumerate}
Where configurations above represent the part of the tableaux around row $k$.
 First of all, by Corollary \ref{elements of gl(n) acting by the classical GT formulas} is enough to consider $r\in\{k,k-1\}$, in fact, for the other cases the action is given by the classical Gelfand-Tsetlin formulas, as in the generic case. So we have $|\Omega^{+}(Tab(w))|\geq |\Omega^{+}(Tab(z))|$ (see Theorem \ref{Basis for irreducible generic modules gl(n)}(i)).\\
Assume first that $g=E_{r,r+1}$. The action of $E_{r,r+1}$ on basis elements of $V(T(\bar{v}))$ is given by.

\begin{equation}\label{formulas for E_{r,r+1} on T}
\begin{split}
E_{r,r+1}T(\bar{v}+w)=&\displaystyle\sum_{\sigma}\mathcal{D}^{\bar{v}}\left((v_{ri}-v_{rj})e_{r,r+1}(\sigma(v+w)\right)T(\bar{v}+w+\sigma(\delta^{r1}))+\\
&\displaystyle\sum_{\sigma}\left((v_{ri}-v_{rj})e_{r,r+1}(\sigma(v+w))\right)(\bar{v})\mathcal{D}T(\bar{v}+w+\sigma(\delta^{r1})),
\end{split}
\end{equation}

\begin{equation}\label{formulas for E_{r,r+1} on DT}
\begin{split}
E_{r,r+1}\mathcal{D}T(\bar{v}+w)=&\displaystyle\sum_{\sigma}\mathcal{D}^{\bar{v}}\left(e_{r,r+1}(\sigma(v+w)\right)T(\bar{v}+w+\sigma(\delta^{r1}))\\
&+\displaystyle\sum_{\sigma}e_{r,r+1}(\sigma(\bar{v}+w))\mathcal{D}T(\bar{v}+w+\sigma(\delta^{r1})).
\end{split}
\end{equation}
Depending on the $Tab(z)$ and $Tab(z+\delta^{r1})$ being regular tableau or derivative tableau, we have to look at different coefficients as shows the following table:
 \begin{center}
\begin{tabular}{|c|c|c|c|}
\hline
Type & $Tab(z)$ & $Tab(z+\delta^{r1})$ & Coefficient of $Tab(z+\delta^{r1})$\\ \hline
(a)& $T(\bar{v}+z)$ & $T(\bar{v}+z+\delta^{r1})$ & $\mathcal{D}^{\bar{v}}\left((v_{ki}-v_{kj})e_{r,r+1}(v+z)\right)$\\ \hline
(b)&$\cD T(\bar{v}+z)$ & $\cD T(\bar{v}+z+\delta^{r1})$ & $e_{r,r+1}(\bar{v}+z)$\\ \hline
(c)&$T(\bar{v}+z)$ & $\cD T(\bar{v}+z+\delta^{r1})$ & $\left((v_{ki}-v_{kj})e_{r,r+1}(v+z)\right)(\bar{v})$\\ \hline
(d)&$\cD T(\bar{v}+z)$ & $T(\bar{v}+z+\delta^{r1})$ & $\mathcal{D}^{\bar{v}}\left(e_{r,r+1}(v+z)\right)$\\ \hline
\end{tabular}
\end{center}
\begin{itemize}
\item[(i)] Consider a tableaux $Tab(z)$ such that $(\bar{v}+z)_{ki}=(\bar{v}+z)_{k-1,t}=x$ and $(\bar{v}+z)_{kj}=x+a$, with $a \in \mathbb{Z}$. We will represent this part of tableau $Tab(z)$  by (the row where appear two  variable equal to $x$ is the  $k$-th row of the tableau $Tab(z)$).
$$Tab(z)=\left(\begin{array}{ccc} x & & x+a \\  &  x & \end{array}\right)$$ 
This tableau can be regular (if $a \geq 0$) or derivative (if $a<0$). We will analyse this two cases. When $E_{k-1,k}$ acts in this tableau we obtain the following tableau:
$$Tab(z+\delta^{k-1,t})=\left(\begin{array}{ccc} x & & x+a \\  &  x+1 & \end{array}\right)$$
Note that 
$$
|\Omega^{+}(Tab(z+\delta^{k-1,t}))|=\begin{cases}
|\Omega^{+}(Tab(z))|-2, & \text{if} \ \ a=0\\
|\Omega^{+}(Tab(z))|-1, & \text{otherwise.}
\end{cases} 
 $$

For $a \geq 0$ the tableaux $Tab(z)$ is regular. In this case we will analyse the coefficients of types (a) and (c). Recall that,
\begin{equation}\label{base1}
e_{k-1,k}(v+z)=-\displaystyle\frac{\prod_{t=1}^k\left((v+z)_{k-1,1}-(v+z)_{kt}\right)}{\prod_{t \neq 1}^{k-1}\left((v+z)_{k-1,1}-(v+z)_{k-1,t}\right)}
\end{equation}
In this case, $e_{k-1,k}(v+z)$ is smooth function (because the singularity is in $k$-th row and in the denominator appear differences between elements of the  $(k-1)$-th row). Thus, by Lemma (\ref{Properties of D on functions}) follows that:
\begin{itemize}
\item[$\bullet$] $\mathcal{D}^{\bar{v}}\left((v_{ri}-v_{rj})e_{k-1,k}(v+z)\right)=e_{k-1,k}(\bar{v}+z)$, and, as in this case we have the relation  $(\bar{v}+z)_{kj}=(\bar{v}+z)_{k-1,t}$, therefore $e_{k-1,k}(\bar{v}+z)=0$.
\item[$\bullet$] $\left((v_{ki}-v_{kj})e_{k-1,k}(v+z)\right)(\bar{v})=0.$
\end{itemize}
If $a<0$, the tableau $Tab(z)$ is derivative tableau. In this case, we need analyze the coefficients of types (b) and (d). Now the coefficients of tableau $Tab(z+\delta^{k-1,t})$ are $e_{k-1,k}(\bar{v}+z)$ and $\mathcal{D}^{\bar{v}}\left(e_{k-1,k}(v+z)\right)$. In this case, we have that
\begin{itemize}
\item[$\bullet$] $e_{k-1,k}(\bar{v}+z)=0$, because in the numerator of this rational function appear a difference $(\bar{v}+z)_{kj}-(\bar{v}+z)_{k-1,t}$ that is equal to zero, in this case.
\item[$\bullet$] For the coefficient $\mathcal{D}^{\bar{v}}\left(e_{k-1,k}(\bar{v}+z)\right)$, we have the following:
$$\begin{array}{rl}
\mathcal{D}^{\bar{v}}\left(e_{k-1,k}(\bar{v}+z)\right)=&\mathcal{D}^{\bar{v}}(-(v_{k-1,t}-(v_{ki}+a))(v_{k-1,t}-v_{ki})\varphi(v))\\
=&-\frac{1}{2}\varphi(\bar{v})a
\end{array}$$
\end{itemize}
where $\varphi(v)$ is a rational function of the entries of the vector $v$ that not depends of the entries $v_{ki}$ end $v_{kj}$, moreover $\varphi(\bar{v})\neq 0$. As $a \neq 0$, in this case we have 
$\mathcal{D}^{\bar{v}}\left(e_{k-1,k}(\bar{v}+z)\right)\neq 0$. 

\item[(ii)]
Now we will consider a tableau $Tab(z)$ such that $k \neq n-1$, $(\bar{v}+z)_{k+1,t}=(\bar{v}+z)_{ki}=x$ and $(\bar{v}+z)_{kj}=x+a$, with $ a \in \mathbb{Z}$. A representation of part this tableau is
$$Tab(z)=\left(\begin{array}{ccc}  & x &  \\ x  &   &x+a  \end{array}\right)$$
This tableau should be regular or derivative depending on the value of $a$. We will analyze this two cases. When $E_{k,k+1}$ acts in this tableau we obtain the tableau
$$Tab(z+\delta^{k,t})=\left(\begin{array}{ccc}  & x &  \\ x+1 &  & x+a\end{array}\right)$$
In this case, we have that $|\Omega^+Tab(z+\delta^{k,t})|=|\Omega^+Tab(z)|-1$. Initially we will assume that this tableau is regular tableau ($a \geq 0$) and we will analyze the coefficients of type (a) and (c). For this, recall that:
\begin{equation}\label{base2}
e_{k,k+1}(v+z)=-\displaystyle\frac{\prod_{t=1}^{k+1}\left((v+z)_{k,1}-(v+z)_{k+1,t}\right)}{\prod_{t \neq 1}^{k}\left((v+z)_{k1}-(v+z)_{kt}\right)}.
\end{equation}
If $a>0$, in this case that $e_{k,k+1}(v+z)$ is a smooth function, then by  Lemma \ref{Properties of D on functions} follows that
\begin{itemize}
\item[$\bullet$] $\mathcal{D}^{\bar{v}}\left((v_{ri}-v_{rj})e_{k,k+1}(v+z)\right)=e_{k,k+1}(\bar{v}+z)$, and, as we have the relation  $(\bar{v}+z)_{k+1,t}=(\bar{v}+z)_{k-1,1}$ then, $e_{k,k+1}(\bar{v}+z)=0$.
\item[$\bullet$] $\left((v_{ki}-v_{kj})e_{k-1,k}(v+z)\right)(\bar{v})=0.$
\end{itemize}
On the other hand, if $a=0$, we have
$$\begin{array}{rl}\left((v_{k1}-v_{kj})e_{k-1,k}(v+z)\right)(\bar{v})=&\left(\displaystyle\frac{\prod_{t=1}^k\left((v+z)_{k-1,1}-(v+z)_{kt}\right)}{\prod_{t \neq 1,j}^{k-1}\left((v+z)_{k-1,1}-(v+z)_{k-1,t}\right)}\right)(\bar{v})\\
=0
\end{array}$$

$$\begin{array}{rl}\mathcal{D}^{\bar{v}}\left((v_{ki}-v_{kj})e_{k-1,k}(v+z)\right)=&\mathcal{D}^{\bar{v}}\left(\displaystyle\frac{\prod_{t=1}^k\left((v+z)_{k-1,1}-(v+z)_{kt}\right)}{\prod_{t \neq 1,j}^{k-1}\left((v+z)_{k-1,1}-(v+z)_{k-1,t}\right)}\right)\\
=0&
\end{array}$$

Finally, if $a<0$ the tableau $Tab(\bar{v}+z)$ is a derivative tableau. In this case we will analyze the coefficients of type (b) and (d). In this case, using the formula (\ref{base2}), follows that:
\begin{itemize}
\item[$\bullet$] $e_{k-1,k}(\bar{v}+z)=0$, because in the numerator of this rational function appear the difference $(\bar{v}+z)_{k1}-(\bar{v}+z)_{k+1,1}$, that is equal to zero in this case.
\item[$\bullet$] More one time using the formula (\ref{base2}), we have that $\mathcal{D}^{\bar{v}}\left(e_{k-1,k}(\bar{v}+z)\right)\neq 0$.
\end{itemize}
Continuing this analysis, case by case,  we can identify all tableaux $Tab(w) \in U\cdot Tab(z)$  such that the correspondent coefficient is no zero is a tableaux $Tab(w)$ such that $|\Omega^{+}(Tab(w))|=|\Omega^{+}(Tab(z))|-1$, that was described in begin the poof of Lemma \ref{size of Omega + decreases at most 1 with E_{r,r+1}} (cases (I)-(V)). Moreover, for all tableaux $Tab(w) \in V(T(\bar{V}))$ such that
$|\Omega^{+}(Tab(w))| \leq |\Omega^{+}(Tab(z))|-2$ have respective coefficients  equal to zero.

\end{itemize}
\end{proof}


The following lemma shows that, for each of the cases described in Lemma \ref{size of Omega + decreases at most 1 with E_{r,r+1}} where $Tab(z) \prec Tab(w)$ and $|\Omega^+(Tab(w)| = |\Omega^+(Tab(z)|-1$, (cases (I)-(V)), the action of the basis elements of $\mathfrak{gl}(n)$ on $Tab(w)$ will generate tableaux $Tab(w')$ such that $|\Omega^+(Tab(w))|\leq |\Omega^+(Tab(w'))|$ (i.e. $Tab(w)\prec_{g} Tab(w')$ for some $g\in\mathfrak{gl}(n)$ of the form $E_{r,r+1}$ or $E_{r+1, r}$ implies  $|\Omega^+(Tab(w))|\leq |\Omega^+(Tab(w'))|$).
\begin{lemma}\label{tableaux that size decrease one}
Let $Tab(w)$ be a tableau such that $Tab(z) \prec Tab(w)$ and $|\Omega^+(Tab(w))|=|\Omega^+(Tab(z))|-1$ for some $Tab(z)$. If $Tab(w)\prec_{g} Tab(w')$ for some $g\in\mathfrak{gl}(n)$ of the form $E_{r,r+1}$ or $E_{r+1, r}$, then  $|\Omega^+(Tab(w))|\leq |\Omega^+(Tab(w'))|$.
\end{lemma}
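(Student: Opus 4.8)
The plan is to derive the statement from the classification of size-decreasing moves already obtained in the proof of Lemma~\ref{size of Omega + decreases at most 1 with E_{r,r+1}}. By hypothesis $Tab(z)\prec Tab(w)$ with $|\Omega^{+}(Tab(w))|=|\Omega^{+}(Tab(z))|-1$, so that lemma forces $(Tab(z),Tab(w))$ to be one of the configurations $(\mathrm{I})$--$(\mathrm{V})$; in particular $Tab(w)$ is one of the five possible \emph{target} tableaux, and its type (regular or derivative), the sign of $w_{ki}-w_{kj}$, and the equalities among the entries of rows $k-1,k,k+1$ are completely determined. Now assume, for a contradiction, that $Tab(w)\prec_{g}Tab(w')$ with $|\Omega^{+}(Tab(w'))|<|\Omega^{+}(Tab(w))|$, where $g$ has the form $E_{r,r+1}$ or $E_{r+1,r}$. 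Applying Lemma~\ref{size of Omega + decreases at most 1 with E_{r,r+1}} once more, the drop is then exactly one and $(Tab(w),Tab(w'))$ is again one of $(\mathrm{I})$--$(\mathrm{V})$, so $Tab(w)$ is also one of the five \emph{source} tableaux. Hence it suffices to check that no tableau can be simultaneously a source and a target in the list $(\mathrm{I})$--$(\mathrm{V})$.

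This is a finite inspection, organised around two observations. First, in each of $(\mathrm{I})$--$(\mathrm{V})$ the element of $\Omega^{+}$ that is lost disappears because the generator ($E_{k-1,k}$ or $E_{k+1,k}$) moves one single entry, of a row adjacent to row $k$, that sat at difference exactly $0$ from one of the entries of row $k$ in position $i$ or $j$ (the coefficient of the target meanwhile being nonzero for the reasons spelled out in Lemma~\ref{size of Omega + decreases at most 1 with E_{r,r+1}}, via Lemma~\ref{Properties of D on functions}). Passing to the target replaces that boundary coincidence by a difference equal to $\pm 1$; and, crucially, by $1$-singularity every row other than row $k$ has its entries pairwise at non-integer differences, so no \emph{other} entry of the same adjacent row can again be at difference $0$ from the relevant entry of row $k$. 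Thus none of the targets exhibits the boundary coincidence that each of the sources requires. Second, when the generator acts on row $k$ itself (the cases $g=E_{k+1,k}$ appearing in $(\mathrm{III})$ and $(\mathrm{V})$), the resulting tableau is a regular tableau with $w'_{ki}\neq w'_{kj}$, whereas the sources $(\mathrm{I})$--$(\mathrm{III})$ are derivative tableaux and the sources $(\mathrm{IV})$--$(\mathrm{V})$ have $w_{ki}=w_{kj}$; so such a target cannot be any of them either. Combining the two observations, no target equals a source, and therefore $|\Omega^{+}(Tab(w'))|\geq|\Omega^{+}(Tab(w))|$, as claimed.

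The main point to be careful about is the bookkeeping in the second paragraph: when $g=E_{k+1,k}$ acts on row $k$, the displaced entry changes its difference with an entry of row $k-1$ \emph{and} with an entry of row $k+1$, so one has to verify that in the target neither of these differences equals $0$ — again a consequence of the rigidity coming from $1$-singularity, since the value the displaced entry would have to match already occurs at most once among the entries of rows $k-1$ and $k+1$. Running $(\mathrm{I})$--$(\mathrm{V})$ against one another in this way, together with the type/equality dichotomy recorded above, shows that the five targets and the five sources are disjoint, which completes the argument.
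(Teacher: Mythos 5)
Your approach is genuinely different from the paper's. The paper's proof does not argue by contradiction that a target cannot simultaneously be a source; instead, for each target tableau $Tab(w)$ arising in cases $(\mathrm{I})$--$(\mathrm{V})$ of Lemma~\ref{size of Omega + decreases at most 1 with E_{r,r+1}} it constructs explicit subsets $W(Tab(w))$ and $W^*(Tab(w))$ of $\mathcal{B}(T(\bar{v}))$ (parameterized by integer offsets $b,c,d$ and inequalities on them) such that (i) every element of $W(Tab(w))\cup W^*(Tab(w))$ has $|\Omega^+|\geq |\Omega^+(Tab(w))|$, and (ii) $W(Tab(w))\cup W^*(Tab(w))$ is $\mathfrak{gl}(n)$-invariant. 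The lemma then follows immediately, and, importantly, these invariant sets are used explicitly again later in the inductive step of the proof of Proposition~\ref{size of Omega + decreases at most 1}. Your ``no target is a source'' reduction only yields the one-generator statement, so if substituted for the paper's proof it would not supply the $\mathfrak{gl}(n)$-invariant sets that the downstream induction relies on.

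Beyond that structural mismatch, the finite inspection in your argument has concrete gaps. Your second observation asserts that when $g=E_{k+1,k}$ acts on row $k$ in cases $(\mathrm{III})$ and $(\mathrm{V})$, the target is a regular tableau with $w'_{ki}\neq w'_{kj}$. For $(\mathrm{V})$ this is fine, but for $(\mathrm{III})$ the source has $v_{ki}=x$, $v_{kj}=x+a$ with $a\in\mathbb{Z}_{<0}$, and after $E_{k+1,k}$ one gets $v'_{ki}-v'_{kj}=-1-a$. This is $0$ only when $a=-1$; for $a\leq -2$ the target is again a derivative tableau, so your dichotomy ``targets of row-$k$ moves are regular; sources $(\mathrm{I})$--$(\mathrm{III})$ are derivative'' breaks down, and you cannot rule out $(\mathrm{III})$-target $=$ $(\mathrm{I})$/$(\mathrm{II})$/$(\mathrm{III})$-source on type grounds alone. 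A second gap is in ruling out that a target of $(\mathrm{III})$ could be a source of $(\mathrm{II})$: after the row-$k$ entry moves, it could newly coincide with an entry of row $k+1$. You dismiss this ``by $1$-singularity'', but $1$-singularity only says that at most one entry of row $k+1$ is at integer distance from $v_{ki}$; it does not forbid that single entry from landing exactly at distance $0$. What actually rules this out is that such a coincidence would create a new element of $\Omega^+$, contradicting that $(\mathrm{III})$ is a drop-by-exactly-one move; but that argument uses the full constraint built into the classification of Lemma~\ref{size of Omega + decreases at most 1 with E_{r,r+1}} rather than $1$-singularity alone, and you do not make it. Until these cases are nailed down (and the statement is strengthened to give the $U$-invariant set needed by Proposition~\ref{size of Omega + decreases at most 1}), the proposal does not replace the paper's argument.
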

\begin{proof}
By Lemma \ref{size of Omega + decreases at most 1 with E_{r,r+1}} the set of all possible tableaux $Tab(w)$ satisfying the condition is given by:
\begin{enumerate}[(1)]
\item 
$\left(\begin{array}{ccc} x &  & x+a \\   & x+1  &  \end{array}\right); \ \ \ \left(\begin{array}{ccc}  & x &  \\ x+a  &   & x+1  \end{array}\right) ; \ \ \left(\begin{array}{ccc} x-1 &  & x+a \\   & x  &  \end{array}\right)$ \vspace{0.2cm}\\
with $a \in \mathbb{Z}_{<0}$ \vspace{0.2cm}.
\item $\left(\begin{array}{ccc}  & x &  \\ x  &   & x+1  \end{array}\right);\ \ \left(\begin{array}{ccc} x-1 &  & x \\  &  x &  \end{array}\right) $.
\end{enumerate}
Now, for each $Tab(w)$ as before we will construct subsets $W(Tab(w))$ and $W^*(Tab(w))$, of $\mathcal{B}(T(\bar{v}))$ such that: 
\begin{itemize}
\item[(i)] $Tab(w)\in W(Tab(w))\cup W^{*}(Tab(w))$ and $Tab(w')\in W(Tab(w))\cup W^{*}(Tab(w))$ implies $|\Omega^+(Tab(w'))|\geq |\Omega^+(Tab(w))|$.
\item[(ii)] $W(Tab(w)) \cup W^*(Tab(w))$ is $\mathfrak{gl}(n)$-invariant.
\end{itemize}
The construction before will be enough to finish the proof. In fact, if $Tab(w)\prec_{g} Tab(w')$ with $g\in\mathfrak{gl}(n)$, then $Tab(w')\in W(Tab(w)) \cup W^*(Tab(w))$ (because of condition (ii)) and, then $|\Omega^+(Tab(w'))|\geq |\Omega^+(Tab(w))|$ (because of condition (i)).


For instance, consider $Tab(w)$ to be of the form $\left(\begin{array}{ccc} x &  & x+a \\   & x+1  &  \end{array}\right).$ Define the following subsets of $\mathcal{B}(T(\bar{v}))$:
$${\tiny W(Tab(w))=\left\{ \left( \begin{array}{ccc} x+b &  & x+a+c \\   & x+1+d  &  \end{array}\right) \ | \ b-1-d<0\ \mbox{ and }  \ b+a-1-d<0\right\}}$$
$${\tiny W^*(Tab(w))=\left\{\left(\begin{array}{ccc} x+b &  & x+a+c \\   & x+1+d  &  \end{array}\right) \ | \ b-1-d \geq 0\ \mbox{ or }  \ b+a-1-d \geq 0\right\}.}$$
Note that, the tableaux in $W(Tab(w))$ or $W^{*}(Tab(w))$ can be regular or derivative. It is easy to check that $W(Tab(w))\cup W^{*}(Tab(w))$ is $\mathfrak{gl}(n)$-invariant. Also, $Tab(w') \in W(Tab(w))\cup W^{*}(Tab(w))$ satisfies $|\Omega^+(Tab(w'))|\geq |\Omega^+(Tab(w))|$. In fact, if $Tab(w') \in W(Tab(w))$ follows that  $|\Omega^+(Tab(w'))|=|\Omega^+(Tab(w))|$ and, by on the other hand, if $Tab(w') \in W^*(Tab(w))$ follows that  $|\Omega^+(Tab(w'))|>|\Omega^+(Tab(w))|$.

The construction of $W(Tab(w))$ and $W^*(Tab(w))$ for the other cases is analogous.

\end{proof}

{\bf Proof of Lemma \ref{size of Omega + decreases at most 1}.} We will show that if $Tab(w) \in E_{1}\cdots E_{t}\cdot Tab(z)$, then \begin{equation}\label{inequality}
|\Omega^{+}(Tab(w))| \geq |\Omega^{+}(Tab(z))|-1,
\end{equation}
and by linearity the result follows for every element of $u \in U(\mathfrak{gl}(n))$.

\begin{proof}
Let $Tab(z) \in V(T(\bar{v}))$ and $E_{1}\cdots E_{t} \in U$. We will use induction over $t$. Indeed, if $t=1$, this result is true by Lemma \ref{size of Omega + decreases at most 1 with E_{r,r+1}}. Now, suppose that the lemma is true for every $s$ such that $1 \leq s \leq t-1$. We will proof that the result is true for $t$. Indeed, as 
$$E_{1}E_{2}\cdots E_{t-1}E_{t}\cdot Tab(z)=E_{1}E_{2}\cdots E_{t-1}(E_{t}\cdot Tab(z))$$
we have to consider two cases:\vspace{0,2cm}\\
{\it Case 1.} If $Tab(z)$ is not a tableau described in Lemma \ref{size of Omega + decreases at most 1 with E_{r,r+1}} (cases (I)-(V)), follows that  $E_{t}\cdot Tab(z)=\sum_i\alpha_iTab(w_i)$, where $|\Omega^{+}(Tab(w_i))| \geq |\Omega^{+}(Tab(z))|$. In this case, by induction hypothesis, the action of $E_{1}E_{2}\cdots E_{t-1}$ over $(E_{t}\cdot Tab(w))$ we will obtain a linear combination of tableaux $Tab(w''_i)$ such that 
$|\Omega^{+}(Tab(w''_i))| \geq |\Omega^{+}(Tab(z))|-1$
which proves the  desired result in this case.\vspace{0,2cm}\\
{\it Case 2.} Now we will assume that $Tab(z)$ is a tableau described in Lemma \ref{size of Omega + decreases at most 1 with E_{r,r+1}} (cases (I)-(V)). In this case follows that
$$E_{t}\cdot Tab(z)=\sum_i\alpha_1Tab(w_i)+\sum_i\beta_1Tab(w'_i)$$
where $|\Omega^{+}(Tab(w_i))| \geq |\Omega^{+}(Tab(z))|$ and
$|\Omega^{+}(Tab(w'_i))| = |\Omega^{+}(Tab(z))|-1$
Thus, by induction hypothesis, follows that when $E_{1}E_{2}\cdots E_{t-1}$ acts in tableaux $Tab(w_i)$ we obtain a linear combination of new tableaux $Tab(w''_i)$ such that $|\Omega^{+}(Tab(w''_i))| \geq |\Omega^{+}(Tab(z))|-1$. By other hand, when $E_1 \cdots E_{t-1}$ acts in tableaux $Tab(w'_i)$, we will get a linear combination of tableaux belong the set $W(Tab(w))\cup W^*(Tab(w))$ (that was described in Lemma \ref{tableaux that size decrease one}). Thus, in this case we have that in decomposition $E_1 \cdots E_{t-1}\cdot Tab(w'_i)$ will appear tableaux $Tab(w_i^*)$ such that
$$|\Omega^{+}(Tab(w^*_i))| = |\Omega^{+}(Tab(z))|-1$$
which prove the result in this second case.
\end{proof}


\begin{corollary}\label{proper}
If $z\in\mathbb{Z}^{\frac{n(n-1)}{2}}$ is such that $|\Omega^{+}(Tab(z))|\geq 1$ then,  
$U\cdot Tab(z)$ is a proper submodule of $V(T(\bar{v}))$.
\end{corollary}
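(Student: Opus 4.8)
The plan is to deduce Corollary~\ref{proper} directly from Proposition~\ref{size of Omega + decreases at most 1} together with the fact that $\Gamma$ separates tableaux (Theorem~\ref{coeff nonzero implies equal}). Fix $z$ with $|\Omega^{+}(Tab(z))|\geq 1$ and set $M:=U\cdot Tab(z)$. To show $M$ is proper, it suffices to exhibit one basis tableau $Tab(w_0)$ of $V(T(\bar v))$ that does not lie in $M$. I would choose $Tab(w_0)$ to be a tableau with $\Omega^{+}(Tab(w_0))=\emptyset$; such a tableau certainly exists, since one may take $w_0$ so that $\bar v_{rs}+w_{0,rs}-\bar v_{r-1,t}-w_{0,r-1,t}$ is a negative integer (or a non-integer) for every relevant triple $(r,s,t)$ — concretely, push the entries of each row far enough below those of the row above.

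First I would record the key numerical consequence: by Proposition~\ref{size of Omega + decreases at most 1}, every tableau $Tab(w)$ appearing with nonzero coefficient in the expansion of $u\cdot Tab(z)$, for any $u\in U$, satisfies $|\Omega^{+}(Tab(w))|\geq |\Omega^{+}(Tab(z))|-1\geq 0$. This does not immediately rule out $|\Omega^{+}|=0$, so the estimate alone is not enough; but it shows that $M$ is spanned by tableaux with $|\Omega^{+}|\geq |\Omega^{+}(Tab(z))|-1$, and more importantly it bounds how far down one can go. The point to extract is that if $|\Omega^{+}(Tab(z))|\geq 2$ then no tableau with empty $\Omega^{+}$ can occur in $M$ at all, so $M$ is proper; and if $|\Omega^{+}(Tab(z))|=1$ one needs the finer information from Lemma~\ref{size of Omega + decreases at most 1 with E_{r,r+1}} and Lemma~\ref{tableaux that size decrease one}, namely that the tableaux with $|\Omega^{+}|$ exactly equal to $|\Omega^{+}(Tab(z))|-1=0$ that can be reached lie in the explicitly described invariant sets $W(Tab(w))\cup W^{*}(Tab(w))$ attached to the finitely many ``boundary'' configurations (I)--(V). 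One then checks that a generic tableau with empty $\Omega^{+}$ — e.g.\ one chosen far from all those boundary configurations — is not of the form forced by (1)--(2) in Lemma~\ref{tableaux that size decrease one}, hence does not appear in $M$.

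The argument is then assembled as follows. Start from $Tab(z)$ with $|\Omega^{+}(Tab(z))|=:p\geq 1$. By Proposition~\ref{size of Omega + decreases at most 1}, every tableau in $M$ has $|\Omega^{+}|\geq p-1$. If $p\geq 2$, pick any $Tab(w_0)$ with $|\Omega^{+}(Tab(w_0))|\leq p-2$ (for instance empty); it is not in $M$, so $M\subsetneq V(T(\bar v))$. If $p=1$, the only obstruction is that $M$ might contain a tableau with empty $\Omega^{+}$; but by the structure theory of Lemmas~\ref{size of Omega + decreases at most 1 with E_{r,r+1}} and~\ref{tableaux that size decrease one}, any such tableau reachable from $Tab(z)$ must, at the moment its $\Omega^{+}$-size drops to $0$, match one of the configurations (I)--(V) around row $k$ and thereafter stay inside the corresponding $\mathfrak{gl}(n)$-invariant set $W\cup W^{*}$. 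Since $V(T(\bar v))$ contains basis tableaux with empty $\Omega^{+}$ lying outside every such $W\cup W^{*}$ (again by placing the entries suitably far from the row-$k$ patterns in (I)--(V)), we again get $M\subsetneq V(T(\bar v))$.

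I expect the main obstacle to be the borderline case $|\Omega^{+}(Tab(z))|=1$, where the crude inequality of Proposition~\ref{size of Omega + decreases at most 1} does not separate $M$ from the tableaux of minimal $\Omega^{+}$-size; here one genuinely has to invoke the invariant sets $W(Tab(w))\cup W^{*}(Tab(w))$ of Lemma~\ref{tableaux that size decrease one} and verify that they miss an entire ``generic direction'' of empty-$\Omega^{+}$ tableaux. The verification that such a missed tableau exists is a matter of choosing $w_0$ so that every neighbouring-row difference $\bar v_{rs}+w_{0,rs}-\bar v_{r-1,t}-w_{0,r-1,t}$ is negative (using that only the row-$k$ differences can be integral) and so that the row-$k$ part of $Tab(w_0)$ is not one of the finitely many patterns in (1)--(2) of Lemma~\ref{tableaux that size decrease one}; this is routine but must be spelled out. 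Everything else is bookkeeping with $\Omega^{+}$ and the already-established preorder properties.
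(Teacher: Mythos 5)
Your overall strategy mirrors the paper's: argue by contradiction, split on $p := |\Omega^{+}(Tab(z))|$. Your $p\geq 2$ case is precisely the paper's Case 1 and is fine, and your observation that one should justify the existence of a tableau with empty $\Omega^{+}$ (which the paper merely asserts) is a useful addition. However, your treatment of the critical case $p=1$ has a genuine gap.

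You claim that there exist tableaux with empty $\Omega^{+}$ lying outside every invariant set $W(Tab(w))\cup W^{*}(Tab(w))$ coming from the boundary configurations (I)--(V). This is false. In the construction of Lemma \ref{tableaux that size decrease one}, the defining conditions for $W$ (e.g.\ $b-1-d<0$ and $b+a-1-d<0$) and for $W^{*}$ (e.g.\ $b-1-d\geq 0$ or $b+a-1-d\geq 0$) are complementary, and the remaining parameter $c$ and all other entries of the tableau are unconstrained; so $W(Tab(w))\cup W^{*}(Tab(w))$ is actually \emph{all} of $\mathcal{B}(T(\bar{v}))$. The role of that decomposition in Lemma \ref{tableaux that size decrease one} is not to confine the orbit to a proper subspace, but to track the value of $|\Omega^{+}|$ after a one-step drop to level $p-1$: membership in $W$ keeps $|\Omega^{+}|=p-1$, membership in $W^{*}$ strictly increases it, and the invariance of the union guarantees you never fall below $p-1$ again. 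It cannot give you a tableau "outside" the union because there is no such tableau.

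The paper's actual Case 2 argument is of a different character: it claims that $|\Omega^{+}(Tab(z))|=1$ forces the unique non-negative integer difference $\bar v_{rs}-\bar v_{r-1,t}$ to stay away from the critical entries $(k,i),(k,j)$ (otherwise $|\Omega^{+}|\geq 2$), so that around that row the module behaves generically and, as in Theorem \ref{Basis for irreducible generic modules gl(n)}, $|\Omega^{+}|$ is non-decreasing along the $\mathfrak{gl}(n)$-action, yielding $|\Omega^{+}(Tab(w))|\geq 1$ for every $Tab(w)$ in $U\cdot Tab(z)$, a contradiction. To repair your route you would have to replace "$W\cup W^{*}$ is a proper subset" with the paper's observation that the boundary configurations (I)--(V) all require an integer relation $\bar v_{k\pm}-\bar v_{k\mp 1,t}\in\mathbb{Z}$ involving the singular entries, and then argue that when $p=1$ this relation cannot be triggered, so no drop to $p-1=0$ ever occurs.
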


\begin{proof}
Indeed, if the module $U \cdot Tab(z)$ is not a proper submodule of $V(T(\bar{v}))$, follows that for every $Tab(w) \in V(T(\bar{v}))$ we have $Tab(z) \prec Tab(w)$. But by Lemma(\ref{size of Omega + decreases at most 1}), follows that $|\Omega^{+}(Tab(w))| \geq |\Omega^{+}(Tab(z))|-1$. We have two cases to consider:\\
{\it Case 1.}
If $|\Omega^{+}(Tab(z))| \geq 2$, follows that
$$|\Omega^{+}(Tab(w))| \geq |\Omega^{+}(Tab(z))|-1\geq 2-1=1$$
but, this is a contradiction, we always have $T(w) \in V(T(\bar{v}))$ such that $|\Omega^{+}(Tab(w))|=0$.\\
{\it Case 2.}
If $|\Omega^{+}(Tab(z))|=1$, follows that the unique integer difference $\bar{v}_{rs}-\bar{v}_{r-1,t}$ is not close to critical line, because if this integer difference is close to critical line we would  $|\Omega^{+}(Tab(z))| \geq 2$. But away from the critical  line we have the generic case, where we have the following inequality
$$|\Omega^{+}(Tab(w))| \geq |\Omega^{+}(Tab(z))|=1$$
As in the last case this is a contradiction.\\

Hence, if $|\Omega^{+}(Tab(z))|\geq 1$ the module $U\cdot Tab(z)$ could not be  
the full module $V(T(\bar{v}))$, thus in this case, $U \cdot Tab(z)$ is a proper submodule.
\end{proof}
 



\begin{theorem}\label{sufficient conditions for L to be irred}
If $V(T(\bar{v}))$ is irreducible then, $\bar{v}_{rs}-\bar{v}_{r-1,t}\notin \mathbb{Z}$ for any $1\leq s \leq r\leq n$ and  $1\leq t\leq r-1$. 
\end{theorem}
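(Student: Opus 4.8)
The statement is the contrapositive of what we want: I would prove that if some difference $\bar v_{rs}-\bar v_{r-1,t}$ is an integer, then $V(T(\bar v))$ is reducible. The plan is to use Corollary~\ref{proper} to exhibit a proper nonzero submodule. The first step is to observe that if $\bar v_{rs}-\bar v_{r-1,t}\in\mathbb Z$ for some $(r,s,t)$, then by shifting with a suitable $z\in\mathbb Z^{\frac{n(n-1)}{2}}$ (which does not change the module $V(T(\bar v))$, only the chosen base tableau) we may assume $\bar v_{rs}-\bar v_{r-1,t}\in\mathbb Z_{\geq 0}$, i.e.\ $(r,s,t)\in\Omega^{+}(Tab(0))$; more precisely I would produce a tableau $Tab(z)$ in the module with $|\Omega^{+}(Tab(z))|\geq 1$. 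Then Corollary~\ref{proper} immediately gives that $U\cdot Tab(z)$ is a proper submodule, and since $Tab(z)\neq 0$ this submodule is nonzero, so $V(T(\bar v))$ is not irreducible, which is exactly the contrapositive of the theorem.

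The slightly delicate point is the very first reduction: I need to guarantee the existence of \emph{some} basis tableau $Tab(z)$ with $|\Omega^{+}(Tab(z))|\geq 1$ given only that $\bar v_{rs}-\bar v_{r-1,t}\in\mathbb Z$ for one triple. If the integer difference $\bar v_{rs}-\bar v_{r-1,t}$ is already $\geq 0$ then $(r,s,t)\in\Omega^{+}(Tab(0))$ and we may take $z=0$. If it is negative, then I would pick $z$ supported on the $(r-1)$-th row (say $z=m\,\delta^{r-1,t}$ for a sufficiently negative integer $m$, or on the $r$-th row with the opposite sign), chosen so that $(\bar v_{rs}+z_{rs})-(\bar v_{r-1,t}+z_{r-1,t})\geq 0$ while no new integral relations among entries of a single row are created (this is automatic since only one row is shifted and the $1$-singularity is unaffected by integral shifts). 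This yields $Tab(z)$ with $(r,s,t)\in\Omega^{+}(Tab(z))$, hence $|\Omega^{+}(Tab(z))|\geq 1$.

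The main work has in fact already been done: it sits in Proposition~\ref{size of Omega + decreases at most 1} (via Lemmas~\ref{size of Omega + decreases at most 1 with E_{r,r+1}} and \ref{tableaux that size decrease one}), whose content is that the action of $\mathfrak{gl}(n)$ can decrease $|\Omega^{+}|$ by at most $1$, together with Corollary~\ref{proper}, which packages this into the statement that $U\cdot Tab(z)$ omits every tableau with $|\Omega^{+}|=0$ whenever $|\Omega^{+}(Tab(z))|\geq 1$. So the only genuinely new step is the reduction in the preceding paragraph, and I expect that to be routine; the real obstacle — controlling how $|\Omega^{+}|$ behaves under the singular Gelfand--Tsetlin formulas near the critical hyperplane — is handled upstream. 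Thus the proof is short:

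\begin{proof}
Suppose, for contradiction, that $\bar v_{rs}-\bar v_{r-1,t}\in\mathbb Z$ for some $1\leq s\leq r\leq n$ and $1\leq t\leq r-1$, yet $V(T(\bar v))$ is irreducible. Choose $z\in\mathbb Z^{\frac{n(n-1)}{2}}$ such that $(\bar v_{rs}+z_{rs})-(\bar v_{r-1,t}+z_{r-1,t})\in\mathbb Z_{\geq 0}$; for instance take $z$ supported on a single row among $\{r-1,r\}$ with an entry of the appropriate sign and magnitude, which leaves the $1$-singularity at $(k,i,j)$ unchanged. Then $(r,s,t)\in\Omega^{+}(Tab(z))$, so $|\Omega^{+}(Tab(z))|\geq 1$. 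By Corollary~\ref{proper}, $U\cdot Tab(z)$ is a proper submodule of $V(T(\bar v))$, and it is nonzero since it contains $Tab(z)$. This contradicts the irreducibility of $V(T(\bar v))$. Hence $\bar v_{rs}-\bar v_{r-1,t}\notin\mathbb Z$ for all such $(r,s,t)$.
\end{proof}
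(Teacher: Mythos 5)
Your proof is correct and takes essentially the same approach as the paper: both argue by contraposition, find a basis tableau $Tab(z)$ with $|\Omega^{+}(Tab(z))|\geq 1$ (possible precisely because some difference $\bar v_{rs}-\bar v_{r-1,t}$ is an integer), and then invoke Corollary~\ref{proper} to conclude that $U\cdot Tab(z)$ is a proper nonzero submodule. The only cosmetic difference is that the paper chooses $w$ with $|\Omega^{+}(Tab(w))|$ maximal (noting in passing this maximum is $\geq 1$), whereas you construct an explicit $z$ via a single-row integer shift; since Corollary~\ref{proper} only requires $|\Omega^{+}|\geq 1$, both choices work equally well.
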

\begin{proof}
Suppose that $\bar{v}_{rs}-\bar{v}_{r-1,t} \in \mathbb{Z}$ for some $1 \leq t < r \leq n, 1 \leq s \leq r$, choose $w \in \mathbb{Z}^{\frac{n(n-1)}{2}}$ such that $\left|\Omega^+(Tab(w))\right|$ is maximal ($|\Omega^+(Tab(w))|\geq 1$) and consider the submodule  $N$ of $ V(T(\bar{v}))$ generate by $Tab(w)$. Thus by \ref{proper} follows that the submodule $N$ is proper. 
\end{proof}


\end{document}